\newcommand{\theoremref}[1]{\hyperref[#1]{Theorem~\ref*{#1}}}
\newcommand{\claimref}[1]{\hyperref[#1]{Claim~\ref*{#1}}}
\newcommand{\situationref}[1]{\hyperref[#1]{Situation~\ref*{#1}}}
\newcommand{\lemmaref}[1]{\hyperref[#1]{Lemma~\ref*{#1}}}
\newcommand{\definitionref}[1]{\hyperref[#1]{Definition~\ref*{#1}}}
\newcommand{\propositionref}[1]{\hyperref[#1]{Proposition~\ref*{#1}}}
\newcommand{\conjectureref}[1]{\hyperref[#1]{Conjecture~\ref*{#1}}}
\newcommand{\corollaryref}[1]{\hyperref[#1]{Corollary~\ref*{#1}}}
\newcommand{\exerciseref}[1]{\hyperref[#1]{Exercise~\ref*{#1}}}
\newcommand{\cndref}[1]{\hyperref[#1]{Condition~\ref*{#1}}}
\newcommand{\remref}[1]{\hyperref[#1]{Remark~\ref*{#1}}}
\numberwithin{equation}{section}
\theoremstyle{plain}
\newtheorem{theorem}[equation]{Theorem}
\newtheorem{proposition}[equation]{Proposition}
\newtheorem{lemma}[equation]{Lemma}
\newtheorem{lem}[equation]{Lemma}
\newtheorem{cor}[equation]{Corollary}
\newtheorem{corollary}[equation]{Corollary}
\newtheorem{cnd}[equation]{Condition}
\theoremstyle{definition}
\newtheorem{definition}[equation]{Definition}
\theoremstyle{remark}
\newtheorem{remark}[equation]{Remark}
\newtheorem{rem}[equation]{Remark}
\DeclareMathOperator{\Pic}{Pic}
\DeclareMathOperator{\im}{Im}
\def\Pic{\operatorname{Pic}}
\newcommand{\bP}{\mathbb{P}}
\newcommand{\bZ}{\mathbb{Z}}
\newcommand{\calA}{\mathcal{A}}
\newcommand{\calP}{\mathcal{P}}
\newcommand{\calR}{\mathcal{R}}
\newcommand{\calO}{\mathcal{O}}
\newcommand{\calM}{\mathcal{M}}
\begin{document}

\title[]{The moduli space of cubic surface pairs via the intermediate Jacobians of Eckardt cubic threefolds}
\author[]{Sebastian Casalaina-Martin} 
\address{Department of Mathematics, University of Colorado, Boulder, Colorado 80309-0395, USA}
\email{casa@math.colorado.edu}
\author[]{Zheng Zhang} 
\address{Institute of Mathematical Sciences, ShanghaiTech University, Shanghai, 201210 China}
\email{zhangzheng@shanghaitech.edu.cn}

\thanks{The first named author was  partially supported by a grant from  the Simons Foundation (317572).}

\subjclass[2010]{14J30, 14J10, 14K10, 14H40}

\date{\today}

\bibliographystyle{amsalpha}

\begin{abstract}
We study the moduli space of pairs consisting of a smooth cubic surface and a smooth hyperplane section, via a Hodge theoretic period map due to Laza, Pearlstein, and the second named author.  The construction associates to such a pair a so-called Eckardt cubic threefold, admitting an involution, and the period map sends the pair to the anti-invariant part of the intermediate Jacobian of this cubic threefold, with respect to this involution.  Our main result is that the global Torelli theorem holds for this period map; i.e., the period map is injective. To prove the result, we describe the
 anti-invariant part  of the intermediate Jacobian as a Prym variety of a branched cover.  Our proof uses results of Naranjo--Ortega, Bardelli--Ciliberto--Verra, and Nagaraj--Ramanan, on related Prym maps.  In fact, we are able to recover the degree of one of these Prym maps by describing positive dimensional fibers, in the same spirit as a result of Donagi--Smith on the degree of the Prym map for connected \'etale double covers of genus $6$ curves.
\end{abstract}

\maketitle

\section*{Introduction}

Moduli spaces of pairs consisting of a variety, together with a boundary divisor, have become a central focus in moduli theory.  In light of the success of Hodge theory in studying the geometry of moduli spaces, it is natural to construct Hodge theoretic period maps associated with such moduli spaces of pairs.  In this paper, we focus on a special case, namely we consider the moduli space $\mathcal M$ of cubic surface pairs $(S,E)$, where $S$ is a smooth cubic surface, and $E\subset S$ is a smooth hyperplane section.  Moduli spaces of cubic surfaces with boundary divisors have been studied before in a number of contexts, e.g., \cite{HKT_lccpt} and \cite{Friedman_acpair}.  More recently, in the context of log K-stability, some compactifications  of the moduli space $\mathcal M$  were described in \cite{GMGS_logk}, by reducing to the GIT compactifications of cubic surface pairs analyzed in \cite{GMG_cubic2pair}.  Cubic surface pairs $(S,E)$  also provide examples of log Calabi--Yau surfaces, which arise in the study of mirror symmetry  \cite{GHK_logcy}; the mirror family to a particular cubic surface pair was recently given in \cite{GHKS_mirrorcubic}.

In another direction, the Zilber--Pink conjecture \cite[Conj.~1.3]{pink} provides a framework that calls attention to unlikely intersections in Shimura varieties; i.e., subvarieties of Shimura varieties that meet special subvarieties in higher than expected dimension. The moduli space $\mathcal M$ is an unlikely intersection of this type.  More precisely, let $\mathcal C\subset \mathcal A_5$ denote the moduli space of intermediate Jacobians of cubic threefolds sitting inside the moduli space of principally polarized abelian varieties of dimension $5$.  There is a  generically injective finite map $\mathcal M\to \mathcal C$, with image $\mathcal M'$ parameterizing intermediate Jacobians of Eckardt cubic threefolds without a choice of Eckardt point (this is explained below, see also \S\ref{S:Eck=CSpair}); $\mathcal M'$ is an unlikely intersection of $\mathcal C$  with a special subvariety of $\mathcal A_5$ (denoted by $\mathcal A_5^\tau$ in the proof of \propositionref{P:stablequartic}).  Unlikely intersections in the Torelli locus were investigated recently in \cite{moonen_oort}.

In this paper, we study the moduli space $\calM$ of cubic surface pairs $(S, E)$ via a Hodge theoretic period map
constructed  in \cite{LPZ_eckardt}.  More precisely, for our purposes, for the pair $(S,E)$, it is more convenient to consider the associated pair $(S,\Pi)$, where $ \Pi \subset \mathbb P^3$ is the hyperplane such that $E=S\cap \Pi$.  Then, associated with the pair $(S,\Pi)$ is a so-called Eckardt cubic threefold $X \subset \bP^4$, which in coordinates can be described as the zero set of $f(x_0,\dots,x_3)+l(x_0,\dots,x_3)x_4^2$, where $f$ (respectively, ~$l$) is the polynomial of degree $3$ (respectively, ~$1$) on $\mathbb P^3$ defining $S$ (respectively, ~$\Pi$).  Such a cubic threefold $X$ has a natural involution $\tau$, given by $x_4\mapsto -x_4$.  The point $p=[0,0,0,0,1]$ is called an  Eckardt point of $X$.  We then define $JX^{-\tau}$ to be the anti-invariant part of the intermediate Jacobian $JX$ with respect to the induced involution  $\tau$ on $JX$.  The principal polarization on $JX$ induces a polarization on $JX^{-\tau}$ of type $(1,1,1,2)$ so that in the end we obtain a period map: $$\calP: \calM \longrightarrow \calA_4^{(1,1,1,2)}$$
$$
(S,\Pi)\mapsto JX^{-\tau}.
$$

As the moduli space of cubic surface pairs has dimension $7$, while the moduli space of polarized abelian fourfolds has dimension $10$, the period map cannot be dominant.  Our main result is the following global Torelli theorem for  $\calP$.

\begin{theorem}[\theoremref{thm:cubic2pairtorelli},  \corollaryref{C:InfTorHdg}, Torelli theorem for $\calP$] \label{thm:main1}
The period map $\calP: \calM \rightarrow \calA_4^{(1,1,1,2)}$ (which sends $(S, \Pi)$ to the anti-invariant part $JX^{-\tau}$) is injective and has injective differential.
\end{theorem}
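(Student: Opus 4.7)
My plan is to prove the Torelli theorem for $\calP$ by giving a curve-theoretic Prym realization of $JX^{-\tau}$ natural in the pair $(S,\Pi)$, and then invoking (and sharpening) generic Torelli results for the resulting Prym map. The starting observation is that projection from the Eckardt point $p$ exhibits $X$ as a double cover of $\bP^3$ whose branch locus is the reducible quartic surface $S \cup \Pi$, with $\tau$ as the covering involution; this immediately identifies $JX^{-\tau}$ with the anti-invariant part of $H^{3}(X)$ intrinsically attached to the pair $(S,\Pi)$. To produce an honest Prym-variety-of-a-curve description, I would project from a suitable $\tau$-invariant line $L \subset X$ (for instance a line of $S$, or one of the lines of the cone over $E=S\cap\Pi$ with vertex $p$), which makes $X$ into a conic bundle over $\bP^{2}$ with smooth quintic discriminant $C$ and associated \'etale double cover $\widetilde C \to C$, so that $JX \cong P(\widetilde C/C)$. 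The involution $\tau$ acts equivariantly on this whole configuration.

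Next I would decompose the Prym equivariantly. Using the standard compatibility of Prym constructions under a further involution, the $\tau$-anti-invariant part $JX^{-\tau}$ should be realized as the Prym variety of a branched double cover $\widetilde C_{0} \to C_{0}$ of curves obtained from $(\widetilde C,C)$ by taking suitable quotients by $\tau$. The branched nature of this cover (the $\tau$-fixed points on $C$ produce branch points downstairs) should account naturally for the polarization of type $(1,1,1,2)$, and the curves $(\widetilde C_{0},C_{0})$ should be intrinsically reconstructible from $(S,\Pi)$. At this stage, the reverse direction — recovering $(S,\Pi)$ from the cover $\widetilde C_{0}\to C_{0}$ — follows by inverting the geometric construction (the hyperplane $\Pi$ and cubic $S$ are read off from the branch data).

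With $\calP$ factored through the Prym map for these branched covers, I would invoke the generic Torelli / degree theorems of Naranjo–Ortega, Bardelli–Ciliberto–Verra, and Nagaraj–Ramanan, which apply precisely to the class of small-genus branched double covers arising here. These results will, generically, recover the cover $\widetilde C_{0}\to C_{0}$ from its Prym variety, modulo the generic fibers of the Prym map. The main obstacle is that the relevant Prym map is not injective on the nose: it has positive-dimensional fibers, analogous to the tetragonal-construction fibers of the classical Prym map $\calR_{6}\to\calA_{5}$ studied by Donagi–Smith. The heart of the proof is therefore a Donagi–Smith-style fiber analysis: describe the Prym fibers geometrically and check that two distinct pairs $(S,\Pi),(S',\Pi')$ cannot give covers in the same fiber, i.e., $\calM$ meets each Prym fiber in a single point. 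As a by-product this should recover the degree of the Prym map, which matches the wording of the abstract.

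Finally, for the infinitesimal part of \theoremref{thm:main1}, I would use Griffiths' description of $d\calP$ as the composition of the Kodaira–Spencer map for the family $(S,\Pi)$ with cup product on the $\tau$-anti-invariant part of the Hodge filtration of $H^{3}(X)$. Because the Eckardt cubic $X$ and its involution are polynomially explicit in $f$ and $l$, this reduces to a Jacobian-ring calculation on $X$ carried out $\tau$-equivariantly, which should give injectivity of the differential directly. The main obstacle remains the Prym-fiber analysis in the preceding paragraph; the other steps are either formal manipulations with Prym varieties under involutions or standard Hodge-theoretic computations.
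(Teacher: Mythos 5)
Your proposal follows essentially the same architecture as the paper's proof: realize $JX^{-\tau}$ via the conic-bundle structure obtained by projecting from a $\tau$-invariant line, run a Donagi--Smith--style analysis of the positive-dimensional fibers of the resulting branched Prym map (recovering its degree as a by-product), reconstruct $(S,\Pi)$ by inverting the geometric construction, and prove infinitesimal Torelli by a $\tau$-equivariant Jacobian-ring computation. All four ingredients are exactly the paper's \theoremref{T:conedecomp}, \theoremref{thm:prymfiber}, \theoremref{T:Eck-from-quart}, and \corollaryref{C:InfTorHdg}.

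One point in your setup needs correcting for the argument to close. You describe the discriminant as a \emph{smooth} quintic with an \emph{\'etale} double cover, followed by ``suitable quotients by $\tau$.'' That is the geometry of projection from one of the $27$ lines on $S$; it leads, via the Klein-group tower, to the Prym map $\calP_{2,6}$ and exhibits $JX^{-\tau}$ only as a \emph{quotient} of $P(\widetilde D_{\sigma\iota},\overline D)$ by a $2$-torsion point. Because of that quotient, the paper extracts only generic finiteness from this route and does not use it for the Torelli theorem. The route that actually works is projection from a line $\ell'$ through the Eckardt point: there the discriminant is the \emph{reducible} quintic $C\cup L$ with a pseudo-double cover, no quotient is taken ($\tau$ acts on $\widetilde C$ as the covering involution), and the relevant cover $\widetilde C\to C$ is simply the restriction to the quartic component, branched at $C\cap L$. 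This is what produces the map $\calP_{3,4}$ whose fiber over $(JX^{-\tau})^\vee$ is the elliptic curve $E$ of lines through $p$ and whose degree $3$ is recovered from $E\cap H$. Relatedly, $JX^{-\tau}$ is the \emph{dual} of $P(\widetilde C,C)$ (types $(1,1,1,2)$ versus $(1,2,2,2)$), so the fiber analysis must be carried out over $(JX^{-\tau})^\vee\in\calA_4^{(1,2,2,2)}$; and the final step requires more than the fiber being a single ``pair'': one needs that the full triple $(C,\kappa_C,L)$ determines $(X,p,\ell')$ including the Eckardt point (\theoremref{T:Eck-from-quart}), which is the nontrivial content behind your phrase ``inverting the geometric construction.''
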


We note that several other period maps for cubic hypersurfaces, and cubic hypersurface pairs have been studied recently. 
First, a related period map, for cubic \emph{threefold} pairs, was constructed in \cite{LPZ_eckardt} and \cite{YZ_symcubic} in essentially the same way we constructed the period map $\calP$ above, motivating the construction we use here.  In another direction, Allcock, Carlson and Toledo \cite{ACT_cubic2, ACT_cubic3} studied the moduli space of cubic surfaces (respectively, cubic threefolds) via the period map for cubic threefolds (respectively, cubic fourfolds) by considering the pairs $(\mathbb P^3,S)$ (respectively, $(\mathbb P^4,X)$) where $S$ (respectively, $X$) is a cubic surface (respectively, cubic threefold).

\smallskip  

Turning now to the proof of \theoremref{thm:main1}, it is well-known that two cubic threefolds are isomorphic if and only if their intermediate Jacobians are isomorphic as principally polarized abelian varieties.  This can be proved either by studying the Gauss map on the theta divisor \cite{CG_cubic3} or by analyzing the singularities of the theta divisor \cite{Mumford_prym, Beauville_singtheta}.  
In our situation, the abelian variety is not principally polarized,  so that it is not immediately clear how to formulate a proof of \theoremref{thm:main1} in these terms.  However, a third proof of the Torelli theorem for cubic threefolds can be given by studying the fibers of the Prym map $\calR_6 \rightarrow \calA_5$ over intermediate Jacobians of cubic threefolds: by \cite{DS_prym} (see also \cite{Donagi_prymfiber}) the fiber of the Prym map  over an intermediate Jacobian of a cubic threefold is (a Zariski open subset of) the Fano surface of lines, which determines the cubic threefold up to isomorphism.  We prove \theoremref{thm:main1} in a similar way, via fibers of a Prym map.  We explain this in more detail below.  The proof that the period map has injective differential is proven via a  more direct Hodge theoretic approach (\corollaryref{C:InfTorHdg}).

\smallskip  

Further motivation for our approach to \theoremref{thm:main1} via Prym varieties comes from the fact that realizing $JX^{-\tau}$ as a Prym variety turns out to be important for an ongoing project of Laza, Pearlstein and the second named author, on the LSV construction \cite{LSV_hk} applied to an Eckardt cubic fourfold; recall that the Prym construction of the intermediate Jacobian of a cubic threefold is central to the work in \cite{LSV_hk}. 

In this direction, we prove that $JX^{-\tau}$ is isomorphic to the dual abelian variety of the Prym variety of a double cover of a smooth genus $3$ curve branched at $4$ points.  For the associated Prym map 
$$
\calP_{3,4}:\calR_{3,4} \longrightarrow \calA_4^{(1,2,2,2)},
$$
which is known to be dominant, and generically finite of degree $3$ (\cite{BCV_pav122, NR_pav122}; see also \cite[Thm.~0.3]{NO_prymtorelli}), we show  that the fiber over the dual abelian variety $(JX^{-\tau})^\vee$ is isomorphic to (a Zariski open subset of) the elliptic curve $E = S \cap \Pi$, whose Jacobian is in turn isomorphic to the invariant part $JX^\tau$:
  
\begin{theorem}[\theoremref{thm:prymfiber}, fiber of the Prym map] \label{thm:main2}
Let $X$ be an Eckardt cubic threefold coming from a cubic surface pair $(S, \Pi)$, with involution $\tau$. Consider the proper Prym map 
$$
\calP_{3,4}: \overline{\calR}_{3,4}^a \longrightarrow \calA_4^{(1,2,2,2)}
$$ where $\overline{\calR}_{3,4}^a$ denotes the moduli space of branched allowable double covers  (see \S \ref{S:PrymFiber})
  of genus $3$ curves branched at $4$ points. The fiber of $\calP_{3,4}$ over the dual abelian variety of the anti-invariant part, $(JX^{-\tau})^\vee \in \calA_4^{(1,2,2,2)}$, is isomorphic to the elliptic curve $E = S \cap \Pi$.
\end{theorem}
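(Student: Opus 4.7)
The plan is to emulate the Donagi--Smith approach to the degree of the Prym map $\calR_6 \to \calA_5$: construct a map $\phi: E \to \overline{\calR}_{3,4}^a$ landing inside $\calP_{3,4}^{-1}((JX^{-\tau})^\vee)$, prove it is injective, and finally argue that it surjects onto (a Zariski open of) the fiber.

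To define $\phi$, for each $q \in E = S \cap \Pi$ I form the line $\ell_q := \overline{pq} \subset \bP^4$ joining $q$ to the Eckardt point $p = [0{:}0{:}0{:}0{:}1]$. From $f(q) = l(q) = 0$ one sees that $\ell_q \subset X$ and that $\ell_q$ is $\tau$-stable. Projecting from $\ell_q$ gives a conic bundle $\widetilde{X}_q = \Bl_{\ell_q} X \to \bP^2$. Since $\ell_q = \bC q \oplus V_-$ with $V_- = \{x_0 = \cdots = x_3 = 0\}$ the $(-1)$-eigenspace of $\tau$, the quotient $\bC^5/\ell_q$ is $\tau$-fixed, so $\tau$ descends to the identity on the base $\bP^2$ while acting on each conic fiber as $u \mapsto -u$ in suitable coordinates $[s:t:u]$. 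A direct coordinate calculation will show that the discriminant curve splits as $\Delta_q = Q_q \cup L_q$, with $Q_q$ a plane quartic (generically smooth of genus $3$) and $L_q$ a line meeting $Q_q$ transversely in $4$ nodes. The $\tau$-action swaps the two components of the degenerate conic over $Q_q \setminus L_q$, preserves them over $L_q$, and collapses them to a double line at the $4$ nodes; restricting the associated cover of $\Delta_q$ to the quartic component therefore produces a branched double cover $\widetilde{Q}_q \to Q_q$ ramified exactly over $Q_q \cap L_q$, and I set $\phi(q) := [\widetilde{Q}_q \to Q_q] \in \overline{\calR}_{3,4}^a$, extending across singular $Q_q$ via the allowable compactification.

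For the Prym identification $P(\widetilde{Q}_q/Q_q) \cong (JX^{-\tau})^\vee$, I would apply Beauville's admissible Prym theory to the full reducible cover $\widetilde{\Delta}_q \to \Delta_q$: the $\tau$-eigenspace decomposition of its Prym $JX$ matches the component decomposition $\Delta_q = L_q \cup Q_q$, with $JX^\tau \cong JE$ supplied by the $L_q$-part and $JX^{-\tau}$ supplied by the $Q_q$-part, after tracking the induced polarizations of type $(1,1,1,2)$ and $(1,2,2,2)$. Injectivity of $\phi$ then follows because the branch divisor $Q_q \cap L_q$ recovers the line $L_q \subset \bP^2$, which pins down the pencil of $2$-planes through $\ell_q$ meeting $X$ in $\ell_q$ plus a pair of lines through $p$, hence recovers $\ell_q$ and $q = \ell_q \cap \{x_4 = 0\}$.

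The main obstacle is the surjectivity of $\phi$ onto the fiber. My plan is a three-step argument: first, an infinitesimal analysis in the spirit of \cite{NO_prymtorelli} showing that $d\calP_{3,4}$ at any $\phi(q)$ has a $1$-dimensional kernel tangent to $E$, bounding the local dimension of the fiber by $1$; second, a connectedness argument using the admissible Prym description of $\calP_{3,4}$ over the Eckardt locus to force any $[\widetilde{C} \to C]$ in the fiber to arise from a $\tau$-equivariant conic bundle of the above type; and third, a reconstruction step in which, combined with the additional datum of the invariant part $JX^\tau \cong JE$, the cubic threefold $X$ together with its $\tau$-stable line through $p$ is recovered uniquely. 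Together these should pin the fiber down to precisely $\phi(E)$; as a by-product, the generic degree $3$ of $\calP_{3,4}$ from \cite{BCV_pav122, NR_pav122, NO_prymtorelli} is recovered by letting $3$ generic sheets specialize to the $1$-dimensional fiber $E$ over the Eckardt locus, in exact parallel with Donagi--Smith.
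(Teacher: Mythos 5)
Your construction of $\phi$, the splitting of the discriminant as a quartic plus a transverse line, the identification of the $\tau$-eigenspaces of $JX$ with the two components of the pseudo-double cover, and the claim that $d\calP_{3,4}$ has a one-dimensional kernel tangent to $E$ at each $\phi(q)$ all match what the paper does (\S\ref{sec:projcone}, \theoremref{T:conedecomp}, \corollaryref{C:DSV15}). The genuine gap is in surjectivity. Your step (1) only shows that $\phi(E)$ is a connected component of the fiber; it says nothing about components of $\calP_{3,4}^{-1}\bigl((JX^{-\tau})^\vee\bigr)$ disjoint from $\phi(E)$. Your step (2), a ``connectedness argument \dots to force any $[\widetilde C\to C]$ in the fiber to arise from a $\tau$-equivariant conic bundle,'' is precisely the assertion to be proved, and no mechanism is offered: an arbitrary point of the fiber is just a branched double cover of a genus $3$ curve with the prescribed Prym, with no a priori relation to any cubic threefold, so the reconstruction in your step (3) cannot even get started (and using $JX^\tau\cong JE$ as ``additional data'' is not available when computing a fiber of $\calP_{3,4}$, which only sees $(JX^{-\tau})^\vee$).

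The paper closes this gap with the Donagi--Smith degree comparison (\lemmaref{L:DS-fiber}), and the logical direction is the opposite of what you propose: the generic degree $3$ of $\calP_{3,4}$ is taken as an \emph{input} from \cite{BCV_pav122, NR_pav122, NO_prymtorelli}, not obtained as a by-product. One blows up $\calA_4^{(1,2,2,2)}$ along the Eckardt locus and $\overline{\calR}^a_{3,4}$ along $\overline{\calR}^a_{\mathcal Q,\mathcal E}$, and uses \corollaryref{C:DSV15} to identify the projectivized normal space at $(X,p)$ with hyperplanes $H\subset\mathbb P^4$ through $p$ satisfying a linear condition; the points of the exceptional divisor upstairs over a general such $H$ are the lines through $p$ lying in $H$, i.e.\ the three points of $E\cap H$. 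Since this local degree $3$ along $\phi(E)$ already exhausts the global degree $3$, \lemmaref{L:DS-fiber} forces the fiber to have no further components (and the unibranch/connected-component hypotheses of that lemma are exactly what your steps (2)--(3) would need to replace). Without importing the known degree, your argument as written does not exclude extra components of the fiber, and \corollaryref{C:degree} cannot be an independent derivation of the degree --- the paper is explicit that it is not.
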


This is an analogue of the result of Donagi and Smith, mentioned above, that the fiber of the Prym map $\overline{\calR}^a_6 \rightarrow \calA_5$ over the intermediate Jacobian of a smooth cubic threefold is isomorphic to the Fano surface of lines on the cubic threefold \cite{DS_prym, Donagi_prymfiber}.
We prove \theoremref{thm:main2} using a similar technique.  

In fact, Donagi and Smith develop a  technique  to study the degree of a generically finite map by studying the local degree along positive dimensional fibers. Applying this to the case of the Prym map over the intermediate Jacobian locus, they show that the Prym map $\calR_6 \rightarrow \calA_5$ has degree $27$.  Geometrically, this comes down to the fact that a general hyperplane section of a cubic threefold is a smooth cubic surface, which contains $27$ lines.
In a similar way, as a corollary of \theoremref{thm:main2}, we can recover the degree of the Prym map $\calP_{3,4}$.

\begin{corollary}[{\cite{BCV_pav122, NR_pav122},  \cite[Thm.~0.3]{NO_prymtorelli}, degree of the Prym map}] \label{cor:main3}
The degree of the Prym map $\calP_{3,4}: {\calR}_{3,4} \rightarrow \calA_4^{(1,2,2,2)}$ is $3$.  
\end{corollary}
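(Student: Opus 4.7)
My plan is to follow the approach used by Donagi--Smith for the Prym map $\calP_6$ over cubic threefold Jacobians \cite{DS_prym}, extracting $\deg\calP_{3,4}$ from the positive-dimensional fiber structure described in \theoremref{thm:main2}. Set $d:=\deg\calP_{3,4}$; the goal is to show $d=3$, using that $\calP_{3,4}:\calR_{3,4}\to\calA_4^{(1,2,2,2)}$ is a dominant morphism between irreducible varieties of the same dimension~$10$.

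Fix an Eckardt cubic threefold $X$ arising from a cubic surface pair $(S,\Pi)$, and let $y_0:=(JX^{-\tau})^\vee\in\calA_4^{(1,2,2,2)}$. By \theoremref{thm:main2}, the fiber $\calP_{3,4}^{-1}(y_0)$ is the elliptic curve $E=S\cap\Pi$. Passing to the Stein factorization $\calR_{3,4}\xrightarrow{g}Z\xrightarrow{h}\calA_4^{(1,2,2,2)}$, the point $z_0:=g(E)$ is the unique preimage of $y_0$ under the finite map $h$ (since $E$ is irreducible and is the full fiber of $\calP_{3,4}$ over $y_0$), so $h$ is totally ramified over $y_0$ with ramification index $d$. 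Concretely, for a general analytic arc $\phi:(\Delta,0)\to(\calA_4^{(1,2,2,2)},y_0)$ transverse to the image of the period map $\calP$ of \theoremref{thm:main1}, the $d$ reduced points in the fiber $\calP_{3,4}^{-1}(\phi(t))$ over $t\neq 0$ specialize, as $t\to 0$, to a $0$-cycle of total degree $d$ supported on $E$.

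The key step is then to compute this limit $0$-cycle geometrically. Identifying $E$ with the pencil of lines on $X$ through the Eckardt point $p=[0{:}0{:}0{:}0{:}1]$, and tracking how the tangent vector $v=\phi'(0)$ acts on the conic-bundle / Prym data used in the proof of \theoremref{thm:main2}, I expect the limit $0$-cycle to be cut out on $E$ by a hyperplane section of the embedding $E\subset\Pi\cong\bP^2$. Since $E$ is a smooth plane cubic, such a hyperplane section has degree exactly $3$, yielding $d=3$.

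The main obstacle is this last identification of the limit cycle with a hyperplane section. It requires an infinitesimal analysis of the Prym construction underlying \theoremref{thm:main2} to verify that the differential of $\calP_{3,4}$ transverse to the Eckardt locus induces, on the normal bundle to $E$ in $\calR_{3,4}$, a section of a degree-$3$ line bundle whose vanishing locus is precisely the sought $0$-cycle. This parallels the Donagi--Smith computation in which a transverse deformation of $JX\in\calA_5$ cuts out the $27$ lines of a smooth hyperplane section of a cubic threefold; here the analogous geometry is the elementary intersection of a line with a plane cubic, producing $3$ points.
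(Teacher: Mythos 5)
Your strategy is the same as the paper's: both run the Donagi--Smith method of reading off the degree from the local behaviour of $\calP_{3,4}$ along the positive-dimensional fiber $E$ of \theoremref{thm:prymfiber}, and your expected limit cycle --- a line section of the plane cubic $E\subset\Pi$, hence of degree $3$ --- is exactly what the paper obtains. The problem is that the step you defer as ``the main obstacle'' is the entire mathematical content of the corollary; the proposal stops precisely where the work begins, and without it your argument only shows that the limit $0$-cycle has degree $d$, not that $d=3$. In the paper this computation occupies most of \S\ref{S:PrymFiber}: the conormal space $(T_{(JX^{-\tau})^\vee}\calA_4^{(1,2,2,2)}/T_{(X,p)}\mathcal C_{\mathcal E})^\vee$ is identified via the Jacobian ring with the quadrics $\sum_{i=0}^3 a_i\frac{\partial F}{\partial x_i}$ subject to $l(a_0,\dots,a_3)=0$ \eqref{E:CokerEckCub}, the corresponding conormal space on the $\overline{\calR}^a_{3,4}$ side is computed in \eqref{E:CokerQuarLine}, the two are matched by the commutative diagram of codifferentials in \propositionref{P:stablequartic}, and \corollaryref{C:DSV15} (the Eckardt analogue of Donagi--Smith's \lemmaref{L:DSV15}) then identifies the fiber of the exceptional-divisor map over a general normal direction with the three lines through $p$ lying in a general hyperplane $H\ni p$, i.e.\ with $E\cap H$. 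Some version of this Jacobian-ring analysis is unavoidable if you want to conclude.

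Two further caveats. First, \theoremref{thm:prymfiber} is itself proved \emph{using} the known fact $\deg\calP_{3,4}=3$ (it enters through \lemmaref{L:DS-fiber}, to rule out extra components of $\mathcal C_{\mathcal E}\times_{\calA_4^{(1,2,2,2)}}\overline{\calR}^a_{3,4}$), so an argument taking \theoremref{thm:main2} as input is a geometric recovery of the degree rather than an independent derivation; the paper states \corollaryref{cor:main3} in exactly this spirit, but you should be explicit that the logic is not circular only because the degree is supplied externally by the cited references. Second, your specialization argument requires properness, so it must be run on the extension $\calP_{3,4}:\overline{\calR}^a_{3,4}\to\calA_4^{(1,2,2,2)}$ to allowable branched admissible covers, not on the open moduli space $\calR_{3,4}$.
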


More precisely, we show that the local degree of $\calR_{3,4} \rightarrow \calA_4^{(1,2,2,2)}$ along a fiber $E$ as in \theoremref{thm:main2} is $3$, which implies the degree of the map is $3$ (see \S\ref{S:DS-method}).  The geometric interpretation in our case is that a general hyperplane section of an Eckardt cubic threefold passing through the Eckardt point is an Eckardt cubic surface and hence contains three invariant lines (which also correspond to  the three points of intersection of the elliptic curve $E$ with the hyperplane section).  

\medskip
 
We now give an outline of the paper. 
We work throughout over the complex numbers $\mathbb C$. 
In Section \ref{sec:eckardtcubic3}, we review the construction of Eckardt cubic threefolds $X$ from cubic surface pairs $(S, \Pi)$ following \cite{LPZ_eckardt}.  We consider lines on Eckardt cubic threefolds, for future use with the Prym construction.  We also explain the period map $\mathcal P: \mathcal M\to \mathcal A^{(1,1,1,2)}_4$.
In Section \ref{sec:precubic} we review various realizations of cubic surfaces and cubic threefolds as fibrations in quadrics, as well as the associated  Prym construction due to Mumford showing that the intermediate Jacobian of a cubic threefold $X$ is isomorphic to the Prym variety associated to the double cover of the discriminant curve obtained by projection from a line $\ell\subset X$  (see \cite[App. ~C]{CG_cubic3}) and also \cite{Beauville_schottky, Beauville_prymij}).  Since our cubic threefold $X$ has an involution $\tau$, in order to realize the involution on the intermediate Jacobian via the Prym construction, we must choose a line $\ell\subset X$ that is preserved by the involution.  

It turns out there are two types of $\tau$-invariant lines in $X$: the $27$ lines $\ell$ on the cubic surface $S \subset X$, which are point-wise fixed by $\tau$, and the lines $\ell'$ passing through the Eckardt point $p$, and are not point-wise fixed by $\tau$ (\lemmaref{lem:invline}).  This latter class of lines $\ell'$ form the ruling of the cone $X \cap T_pX$, and are parametrized by the elliptic curve $E$.  In other words, the $\tau$-fixed locus of the Fano surface $F(X)$ of lines in $X$ consists of $27$ isolated points and an elliptic curve isomorphic to $E$. 

The Prym construction then falls into two cases, depending on the type of $\tau$-invariant line chosen.  When projecting  $X$ from one of the $27$ point-wise fixed lines $\ell$ (Section \ref{sec:proj27}), we obtain an \'etale double cover $\widetilde D \rightarrow D$ of a smooth plane quintic $D$ (\propositionref{prop:proj27}).  Moreover, the involution $\tau$ acts naturally on $\widetilde D$, and we obtain a $(\mathbb Z/2\mathbb Z)^2$ automorphism group acting on $\widetilde D$.  Using the ideas of Mumford \cite{Mumford_prym} (see also \cite{Recillas_trigonal, Donagi_prymfiber, RR_prym1, RR_prym2}), one obtains a tower of double covers corresponding to the various subgroups of $(\mathbb Z/2\mathbb Z)^2$ (\lemmaref{lem:kleinD}), and one can identify $JX^{-\tau}$ with the quotient by a nontrivial  $2$-torsion point of the Prym of a double cover of a smooth genus $2$ curve branched at six points (\theoremref{T:27decomp}).  This naturally leads us to the Prym map:
$$
\calP_{2,6}: \calR_{2,6} \rightarrow \calA_4^{(1,1,2,2)}
$$
for connected double covers of smooth curves of genus $2$ branched at $6$  points, 
which was studied by Naranjo and Ortega \cite{NO_prymtorelli, NO_prymtorelli2}, who show that the map is injective.  In fact, a careful analysis using the results in \cite{LO_cyclicprym} on the differential allows us to conclude that our period map $\calP:\calM \rightarrow \calA_4^{(1,1,1,2)}$ is generically finite onto its image. 
As a consequence, one obtains that  $\mathcal P$ has generically  injective differential; 
 we give a direct proof that $\mathcal P$ has injective differential at all points  in \corollaryref{C:InfTorHdg}.

In contrast, when projecting $X$ from a $\tau$-invariant line $\ell'$ through the Eckardt point $p$ (Section \ref{sec:projcone}), we get an allowable double cover of a reducible plane quintic obtained as the union  a plane quartic $C$ and a residual line (\propositionref{prop:projcone}).  Restricting the cover to $C$ one obtains a double cover $\widetilde C \rightarrow C$  of a genus $3$ curve branched at the $4$ points of intersection of the residual line with $C$.  We use  Beauville's theory \cite{Beauville_schottky, Beauville_prymij} to study this cover, and find that $JX^{-\tau}$ is isomorphic to the dual abelian variety of the Prym variety $P(\widetilde C, C)$ (\theoremref{T:conedecomp}), leading us to the Prym map:
$$
\calP_{3,4}: \calR_{3,4} \rightarrow \calA_4^{(1,2,2,2)}
$$ 
for connected double covers of smooth curves of genus $3$ branched at four points.
As described after \theoremref{thm:main2} above, we show that the fiber of $\calP_{3,4}$ over $(JX^{-\tau})^\vee$ is equal to the elliptic curve $E$ parameterizing the $\tau$-invariant lines through $p$.  For this, we review in Section \ref{S:DS} Donagi--Smith's argument for the fiber of the Prym map over the intermediate Jacobian of a cubic threefold.  We then build on this in Section \ref{S:PrymFiber}, to complete the proof of \theoremref{thm:main2}.

In Section \ref{S:GlobaTor}, we complete the proof of \theoremref{thm:main1}.  The basic idea is that given two Eckardt cubics $X_1$ and $X_2$, with $JX_1^{-\tau}\cong JX_2^{-\tau}$ then \theoremref{thm:main2} implies that the two polarized abelian varieties arise  from choosing different lines on the same Eckardt cubic threefold.  The only ambiguity is the Eckardt point. For this, we have \theoremref{T:Eck-from-quart}, showing that one can recover the Eckardt point.

 \smallskip 
We make one final remark on automorphisms.   Automorphisms of prime order of smooth cubic threefolds have been classified in \cite{GL_autocubic}. Let us use the notation in \cite[Thm.~ 3.5]{GL_autocubic}.  Allcock, Carlson and Toledo study cubic threefolds admitting an automorphism of type $T_3^1$ (which is of order $3$) in \cite{ACT_cubic2}.  An isogenous decomposition of the intermediate Jacobian of a cubic threefold admitting an automorphism of type $T_5^1$ (which has order $5$) is given by van Geemen and Yamauchi in \cite{vGY_ijdecomp}.  We study the intermediate Jacobian of an Eckardt cubic threefold (which admits an involution of type $T_2^1$) in this paper.  The study of intermediate Jacobians of cubic threefolds with other types of automorphisms will appear elsewhere.

\subsection*{Acknowledgements} 

We thank heartily Radu Laza and Gregory Pearlstein for many useful discussions related to the subject. 
 We thank Brendan Hassett for mentioning the connection with  unlikely intersections in Shimura varieties. 
The first named author thanks Claire Voisin for a conversation on isogenies of intermediate Jacobians, and Angela Ortega for a conversation on Prym varieties of branched covers, both of which played an important role in our approach to this paper.  The first named author also thanks Jeff Achter for some conversations about Shimura varieties.   The second named author is grateful to Giulia Sacc\`a for several valuable suggestions (in particular, \propositionref{prop:coneprym} is due to her) at the early stage of the project.

\section{Cubic threefolds with an Eckardt point} \label{sec:eckardtcubic3}

\subsection{Definition of an Eckardt cubic threefold} 

We follow \cite[\S 1]{LPZ_eckardt} and briefly recall the construction and some geometric properties of a cubic threefold admitting an Eckardt point. We also fix the notation that will be used  in the remainder of the paper.

\subsubsection{Definition and characterizations of Eckardt cubic threefolds}

\begin{definition}[\cite{LPZ_eckardt} Definition 1.5] \label{def:eckardt}
Let $X$ be a smooth cubic threefold. A point $p$ of $X$ is an \emph{Eckardt point} if $X \cap T_pX$ (where $T_pX$ denotes the projectivized tangent space of $X$ at $p$) is a cone with vertex $p$ over an elliptic curve $E$.
\end{definition}

We call a pair $(X,p)$ consisting of a smooth cubic threefold $X$ with an Eckardt point $p$ an \emph{Eckardt cubic threefold}.  
Eckardt cubic threefolds can be characterized in the following way.  

\begin{proposition} \label{prop:eckardt}
Let $X$ be a smooth cubic threefold. The following statements are equivalent.
\begin{itemize}

\item $X$ admits an Eckardt point $p$.

\item $X$ admits an involution $\tau$ which fixes point-wise a hyperplane section $S\subset X$ and a point $p \in (X-S)$ (which is an Eckardt point).

\item One can choose coordinates on $\bP^4$ such that $X$ has equation 
\begin{equation} \label{eqn:eckardt}
f(x_0, \dots, x_3) + l(x_0, \dots, x_3)x_4^2 = 0,
\end{equation}
(in which case $p=[0,0,0,0,1]$ is an Eckardt point).
\end{itemize}

\end{proposition}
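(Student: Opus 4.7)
The plan is to take (3) as the central normal form and establish $(3)\Leftrightarrow(2)$ and $(3)\Leftrightarrow(1)$ separately; in each pair the "read off the equation" direction is easy and the "produce the equation" direction requires a coordinate change. For $(3)\Rightarrow(2)$ the linear involution $\tau\colon x_4\mapsto -x_4$ preserves $F=f+lx_4^2$, fixes the hyperplane section $S=X\cap\{x_4=0\}$ pointwise, and fixes the isolated point $p=[0{:}0{:}0{:}0{:}1]\in X$. For the converse $(2)\Rightarrow(3)$: any involution of $\bP^4$ whose fixed locus is a hyperplane plus an isolated point is linearly conjugate to $x_4\mapsto -x_4$, so one may choose coordinates in which $p=[0{:}\cdots{:}0{:}1]$ and the fixed hyperplane is $\{x_4=0\}$. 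The defining cubic $F$ then splits into $\tau$-even and $\tau$-odd parts in $x_4$; $\tau$-invariance of $X$ forces one of the two parts to vanish, and an odd-only cubic $F$ would contain the hyperplane $\{x_4=0\}$, contradicting irreducibility of the smooth cubic $X$. Hence $F$ is even in $x_4$, which in degree three is precisely the form in (3).

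For $(3)\Rightarrow(1)$: a partial derivative computation at $p=[0{:}0{:}0{:}0{:}1]$ gives $T_pX=\{l=0\}$; after a linear change of $(x_0,\dots,x_3)$ sending $l$ to $x_0$, $X\cap T_pX$ is cut out in $\{x_0=0\}\cong\bP^3$ by $f(0,x_1,x_2,x_3)=0$. This polynomial is independent of $x_4$, so $X\cap T_pX$ is the cone with vertex $p$ over the plane cubic $E\colon f(0,x_1,x_2,x_3)=0$ in $\bP^2_{(x_1,x_2,x_3)}$. The smoothness of $X$ is then used to force the smoothness of $E$: if $E$ were singular at some $a$, a Lagrange-multiplier identity $\nabla f(0,a)\propto \nabla l$ would allow one to choose $t\in\bC$ with $\nabla F=0$ at $[0{:}a_1{:}a_2{:}a_3{:}t]\in X$, contradicting the smoothness of $X$. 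Thus $E$ is a smooth plane cubic, i.e.\ an elliptic curve.

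The main obstacle is the remaining direction $(1)\Rightarrow(3)$, where one must actually produce the normal form. The plan is to choose coordinates with $p=[0{:}\cdots{:}0{:}1]$. Since $p\in X$, the $x_4^3$ coefficient of $F$ vanishes, leaving $F=f+g\,x_4+h\,x_4^2$ with $f,g,h$ of degrees $3,2,1$ in $(x_0,\dots,x_3)$. A partial derivative computation identifies $T_pX=\{h=0\}$, so a linear change of $(x_0,\dots,x_3)$ reduces to $h=x_0$. The Eckardt cone condition on $X\cap T_pX$ translates to $F|_{x_0=0}$ being independent of $x_4$, which forces $g(0,x_1,x_2,x_3)\equiv 0$ and hence $g=x_0\cdot q$ for some linear form $q$ in $(x_0,\dots,x_3)$. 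Finally, the substitution $y_4:=x_4+q/2$, which is a genuine linear change of coordinates on $\bP^4$ because $q$ is linear in $(x_0,\dots,x_3)$, completes the square in $x_4$: it absorbs the cross term $x_0\,q\,x_4$ into a modified cubic $f':=f-x_0q^2/4$, yielding $F=f'+x_0\,y_4^2$, the required form with $l=x_0$.
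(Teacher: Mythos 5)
Your proof is correct. Note that the paper itself gives no argument here: its ``proof'' is a citation to \cite[Lem.~1.6, Lem.~1.8]{LPZ_eckardt}, so your write-up supplies the standard normal-form computation that the paper outsources, and it matches the coordinate description the paper then uses (e.g., $T_pX=(l=0)$ and the completion-of-the-square normal form). Two small points you leave implicit but should flag. First, in $(2)\Rightarrow(3)$ you treat $\tau$ as an involution of $\bP^4$; this requires the standard fact that every automorphism of a smooth cubic threefold is induced by an element of $\PGL_5$ (the hyperplane class is intrinsic), after which your eigenspace argument (a $4$-dimensional and a $1$-dimensional eigenspace, since the fixed locus must contain the hyperplane spanned by $S$ and the point $p$ off it) is correct. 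Second, in $(1)\Rightarrow(3)$ the step ``the cone condition forces $F|_{x_0=0}$ to be independent of $x_4$'' deserves one sentence: writing $X\cap T_pX=(f(0,\cdot)+g(0,\cdot)x_4=0)$ in $T_pX\cong\bP^3$ with vertex at $[0{:}0{:}0{:}1]$, an irreducible cone over an elliptic curve with that vertex is cut out by the (unique up to scalar) cubic $G(x_1,x_2,x_3)$ not involving $x_4$ --- alternatively, if $g(0,\cdot)\not\equiv 0$ the surface is generically a graph over $\bP^2$ under projection from $p$ and so contains no line through $p$ over a general point of its image, hence is not a cone. With those remarks added, the argument is complete and self-contained.
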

\begin{proof}
See \cite[Lem.~ 1.6, Lem.~ 1.8]{LPZ_eckardt}.
\end{proof}

We now discuss  \propositionref{prop:eckardt} in coordinates. Let $X$ be a cubic threefold in the coordinates of Equation \eqref{eqn:eckardt}. Let $p=[0,0,0,0,1]$. Then we have that 
\begin{equation}\label{E:TpXdef}
T_pX=(l=0)
\end{equation}
 is the projectivized tangent space to $X$ at $p$ and the intersection  $X \cap T_pX$ is a cone with vertex $p$ over the elliptic curve
 \begin{equation}\label{E:Edef}
 E = (f=l=x_4=0).
 \end{equation}
  As a result, $p$ is an Eckardt point of $X$. The involution $\tau$ of \propositionref{prop:eckardt} is given by 
\begin{equation} \label{eqn:tau}
\tau: [x_0, \dots,x_3,x_4] \mapsto [x_0, \dots, x_3, -x_4],
\end{equation}
and the point-wise fixed hyperplane section $S\subset X$ is given by 
\begin{equation}\label{E:Sdef}
S=(f=x_4=0).
\end{equation}

\subsubsection{Equivalence between Eckardt cubic threefolds and cubic surface pairs} \label{S:Eck=CSpair}
We now review the one-to-one correspondence between Eckardt cubic threefolds $(X, p)$ and cubic surface pairs $(S, \Pi)$ where $S$ is a smooth cubic surface and $\Pi$ is a transverse plane in $\bP^3$. 

First, an Eckardt cubic threefold $(X, p)$ determines (up to projective linear transformation) a cubic surface pair $(S, \Pi)$. Namely, the cubic surface $S\subset X$ is the point-wise fixed hyperplane section of \propositionref{prop:eckardt}.  The plane $\Pi$ is easiest to define in coordinates. Starting with an Eckardt cubic threefold $X$ cut out by Equation (\ref{eqn:eckardt}), we have 
\begin{equation} \label{eqn:cubic2pair}
S = (f=x_4=0), \,\,\,\,\,\, \Pi = (l=x_4=0),
\end{equation}
and view $S$ (respectively, $\Pi$) as a cubic surface (respectively, a plane) in $(x_4=0) \cong \bP^3$. The base elliptic curve $E$ of the cone $X \cap T_pX$ is the intersection of $S$ and $\Pi$: 
\begin{equation} \label{eqn:E}
E = (f=l=x_4=0) = S \cap \Pi. 
\end{equation}
Because $X$ is smooth, by \cite[Lem.~ 1.4]{LPZ_eckardt} the cubic surface $S$ is smooth, and $S$ intersects the plane $\Pi$ transversely (which together imply that the elliptic curve $E$ is smooth).

Conversely, a cubic surface pair $(S, \Pi)$ determines an Eckardt cubic threefold $(X,p)$ (up to projective linear transformation). Indeed, we  assume the equation of the cubic surface $S$ (respectively, the plane $\Pi$) in $\bP^3$ is $f(x_0, \dots, x_3)=0$ (respectively, $l(x_0, \dots, x_3)=0$), and consider the cubic threefold $X$ with equation $f(x_0, \dots, x_3) + l(x_0, \dots, x_3)x_4^2=0$.  By rescaling $x_4$, we see that this is independent of the choice of equations of $S$ and $\Pi$.  Since $S$ and $\Pi$ meet transversally, we have by \cite[Lem.~1.3]{LPZ_eckardt} that the cubic threefold $X$ is smooth.

\begin{rem}
 A coordinate free description of $X$ is to take the double cover $Z \rightarrow \bP^3$ branched along the singular quartic $S \cup \Pi$, and then perform some birational modifications to obtain $X$ (explicitly, one blows up $Z$ along the reduced inverse image of $S \cap \Pi$ and then blows down the strict transform of the inverse image of $\Pi$ in $Z$; see \cite[Prop.~ 1.9]{LPZ_eckardt}). A similar correspondence between Eckardt cubic fourfolds and cubic threefold pairs has been used in \cite{LPZ_eckardt} to construct a period map for the moduli space of cubic threefold pairs.
\end{rem}

\subsection{The period map for cubic surface pairs via Eckardt cubics}

In this subsection we define a period map $\calP$ for cubic surface pairs $(S, \Pi)$ using Eckardt cubic threefolds. We shall prove the global Torelli theorem for $\calP$ in Section \ref{S:GlobaTor}. 

Let $X$ be an Eckardt cubic threefold constructed from a cubic surface pair $(S, \Pi)$, as discussed above. By abuse of notation, we use $\tau$ to denote the involution on the principally polarized intermediate Jacobian $JX$ induced by the involution on $X$ in (\ref{eqn:tau}). Define the invariant part $JX^\tau$ and the anti-invariant part $JX^{-\tau}$ respectively by 
\begin{equation} \label{eqn:invantiinv}
JX^\tau = \im (1+\tau), \ \ \  \text{ and } \ \ \ JX^{-\tau} = \im (1-\tau).
\end{equation} 
By \cite[Prop.~ 13.6.1]{BL_cag}, $JX^\tau$ and $JX^{-\tau}$ are $\tau$-stable complementary abelian subvarieties of $JX$. The dimensions of $JX^\tau$ and $JX^{-\tau}$ are computed in the following lemma. 

\begin{lemma} \label{lem:dimpm}
The abelian subvarieties $JX^\tau$ and $JX^{-\tau}$ have dimensions $1$ and $4$ respectively. 
 The principal polarization of $JX$ induces polarizations of type $(2)$ and  $(1,1,1,2)$ on $JX^\tau$ and  $JX^{-\tau}$, respectively.
\end{lemma}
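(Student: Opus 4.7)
The plan is to compute the dimensions via the Griffiths--Jacobian ring description of $H^{2,1}(X)$ together with the explicit $\tau$-action, and then to pin down the polarization types by combining the general theory of complementary abelian subvarieties with the indecomposability of $JX$ as a principally polarized abelian variety.

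For the dimensions, I would invoke Griffiths' isomorphism $H^{2,1}(X) \cong R(F)_1$, where $F = f + l\,x_4^2$ and $R(F) = \mathbb{C}[x_0,\ldots,x_4]/(\partial_0 F,\ldots,\partial_4 F)$. Since every $\partial_i F$ has degree $\ge 2$, the piece $R(F)_1$ is the full $5$-dimensional space spanned by $x_0,\ldots,x_4$, matching $h^{2,1}(X) = 5$. Via the residue $g \mapsto \mathrm{Res}(g\,\Omega/F^2)$, the action of $\tau$ acquires an extra sign from $\tau^*\Omega = -\Omega$ (immediate since $\tau$ acts with determinant $-1$ on the coordinate space $\mathbb{C}^5$). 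Combined with $\tau^* x_i = x_i$ for $i\le 3$ and $\tau^* x_4 = -x_4$, this implies $\tau$ acts as $-1$ on the classes coming from $x_0,\ldots,x_3$ and as $+1$ on the class from $x_4$. Therefore $\dim H^{2,1}(X)^\tau = 1$ and $\dim H^{2,1}(X)^{-\tau} = 4$, giving $\dim JX^\tau = 1$ and $\dim JX^{-\tau} = 4$.

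For the polarization types, by the standard theory of complementary abelian subvarieties (see \cite{BL_cag}), $JX^\tau$ and $JX^{-\tau}$ are complementary in $JX$, and $K := JX^\tau \cap JX^{-\tau}$ is a $2$-torsion group (since any $x \in K$ satisfies $x = \tau x = -x$) which coincides with the kernel of the induced polarization on each factor. Since $K \subset JX^\tau[2]$ and $JX^\tau$ is $1$-dimensional, we have $|K| \le 4$. Writing the induced types as $(d_+)$ on $JX^\tau$ and $(d_1,d_2,d_3,d_4)$ on $JX^{-\tau}$ with $d_1 \mid d_2 \mid d_3 \mid d_4$, the identity $|K| = d_+^2 = (d_1 d_2 d_3 d_4)^2$ forces $|K|$ to be a perfect square, hence $|K| \in \{1,4\}$. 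So $(d_+;d_1,\ldots,d_4)$ is either $(1;1,1,1,1)$ or $(2;1,1,1,2)$.

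The remaining step is to rule out $|K|=1$. In that case the addition map $JX^\tau \times JX^{-\tau} \to JX$ is an isomorphism, and since $JX^\tau, JX^{-\tau}$ are orthogonal with respect to the principal polarization (because $\tau$ preserves it, forcing $H(v,w) = -H(v,w)$ for $v \in V^+, w \in V^-$), this isomorphism would identify the principal polarization of $JX$ with the product of the induced principal polarizations, exhibiting $JX$ as a nontrivial product of principally polarized abelian varieties. But the theta divisor of the intermediate Jacobian of a smooth cubic threefold is irreducible (Clemens--Griffiths, Beauville), so $JX$ is indecomposable as a PPAV, ruling out this case. Hence the induced polarizations are of type $(2)$ on $JX^\tau$ and $(1,1,1,2)$ on $JX^{-\tau}$. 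The only real subtlety I anticipate is correctly bookkeeping the sign twist $\tau^*\Omega = -\Omega$ in the Griffiths residue, which flips the naive eigenspace assignments read off from the Jacobian ring.
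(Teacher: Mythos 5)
Your proposal is correct and follows essentially the same route as the paper: the dimensions via Griffiths residues on the Jacobian ring (your careful bookkeeping of the sign $\tau^*\Omega=-\Omega$ is exactly the point, and your eigenspace assignment is right), and the polarization type via the general theory of complementary abelian subvarieties of a principally polarized abelian variety. The only difference is cosmetic: the paper outsources the ``both types are $(1,\dots,1,2,\dots,2)$ with the same number of $2$'s'' statement to \lemmaref{L:Rod_avgroup}, whereas you rederive the needed case by counting $|JX^\tau\cap JX^{-\tau}|$ and then, like the paper implicitly does, rule out the split case using the indecomposability of $JX$ as a principally polarized abelian variety.
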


\begin{proof}
One may compute the dimensions of $JX^\tau$ and $JX^{-\tau}$ by computing the dimensions of the positive and negative eigenspaces for the action of $\tau$ on $H^{1,2}(X)$, or $H^{2,1}(X)$, for any  particular Eckardt cubic threefold $X$, such as the one given by  $F=x_0^3+x_1^3+x_2^3+x_3^3+x_0x_4^2=0$.  Identifying the eigenspaces is a standard computation using Griffiths residues (see e.g., \cite[Thm.~3.2.12]{CMSP}).  
The statement on the polarization type follows from \lemmaref{L:Rod_avgroup}, below.
\end{proof}

\begin{lem}[{\cite[Thm.~ 5.3]{Rod_avgroup}}]\label{L:Rod_avgroup}
Let $(A,\Theta)$ be a principally polarized abelian variety, and assume that $\tau:A\to A$ is an involution such that $\tau^*\Theta \equiv_{\operatorname{alg}} \Theta$.  Defining the invariant and anti-invariant sub-abelian varieties 
$$
A^\tau:=\im(1+\tau), \ \ \ \text{ and } \ \ \    A^{-\tau}:=\im(1-\tau),
$$
respectively, we have that $A^\tau$ and $A^{-\tau}$ are complementary sub-abelian varieties of $(A,\Theta)$ in the sense of \cite[p.125, p.365]{BL_cag}, and there is some non-negative integer $r$ such that the polarization types of the restrictions of $\Theta$ to  $A^\tau$ and $A^{-\tau}$ are both of the form
$$
(1,\cdots,1,\underbrace{2,\cdots,2}_r);
$$
(i.e., the number of $1$s  in the polarization type may differ for $A^{\tau}$ and $A^{-\tau}$, but the number of $2$s is the same).
Moreover, the exponent of $A^\tau$ and $A^{-\tau}$ is $2$, unless $r=0$, in which case the exponent is $1$, and then $A=A^\tau\times A^{-\tau}$ as principally polarized abelian varieties.
\end{lem}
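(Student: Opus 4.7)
The plan is to use the symmetry of $\tau$ with respect to $\Theta$ to realize $A^\tau$ and $A^{-\tau}$ as complementary sub-abelian varieties in the sense of Birkenhake--Lange, and then to extract the polarization type from the $2$-torsion nature of their intersection together with the general principle that the elementary divisors of two complementary polarized abelian subvarieties agree after dropping trailing $1$'s.

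First I would translate the algebraic equivalence $\tau^*\Theta \equiv_{\operatorname{alg}} \Theta$ into the statement that $\tau$ is a \emph{symmetric} endomorphism with respect to the principal polarization, i.e., $\widehat{\tau}\circ \phi_\Theta = \phi_\Theta \circ \tau$. Combined with $\tau^2 = \id$, this makes $e_\pm := \tfrac{1}{2}(1\pm \tau) \in \End_{\mathbb{Q}}(A)$ a pair of orthogonal symmetric idempotents summing to $1$. Their images $\im(1+\tau)=A^\tau$ and $\im(1-\tau)=A^{-\tau}$ are then the abelian subvarieties associated with the symmetric idempotents, so by \cite[Ch.~5]{BL_cag} they form a pair of complementary sub-abelian varieties of $(A,\Theta)$ and the addition map $\mu:A^\tau \times A^{-\tau} \to A$ is an isogeny.

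Second I would bound the exponent. The kernel of $\mu$ is the antidiagonal copy of $A^\tau \cap A^{-\tau}$; but any $a\in A^\tau \cap A^{-\tau}$ satisfies $\tau a = a$ (since $a\in \im(1+\tau)$) and $\tau a = -a$ (since $a\in \im(1-\tau)$), so $2a=0$. Hence $A^\tau \cap A^{-\tau} \subset A[2]$, and its exponent divides $2$. For complementary sub-abelian varieties this exponent coincides with the exponent, i.e.\ the largest elementary divisor, of each restricted polarization $\Theta|_{A^{\pm\tau}}$ \cite[\S 5.3]{BL_cag}, so both polarization types have the shape $(1,\dots,1,2,\dots,2)$.

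Third I would invoke \cite[Cor.~5.3.6]{BL_cag}, which says that the polarization types of two complementary sub-abelian varieties of a principally polarized abelian variety agree after deleting trailing $1$'s. Combined with the previous step, this forces the same number $r$ of $2$'s in the polarization types of $\Theta|_{A^\tau}$ and $\Theta|_{A^{-\tau}}$. Finally, when $r=0$ we have $A^\tau \cap A^{-\tau}=0$, so $\mu$ is an isomorphism and $\mu^*\Theta = p_1^*(\Theta|_{A^\tau}) + p_2^*(\Theta|_{A^{-\tau}})$ is a sum of principal polarizations, exhibiting $A=A^\tau \times A^{-\tau}$ as a product of principally polarized abelian varieties.

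The main obstacle will be pinning down the precise form of the Birkenhake--Lange matching statement used in step three; if a direct citation is inconvenient, one would reprove it at the level of lattices by choosing a basis of $H_1(A,\mathbb{Q})$ adapted to the $\pm 1$-eigenspaces of $\tau$, using that the $\tau$-invariance of the symplectic form forces the eigenspace lattices to be mutually orthogonal, and then putting each restricted form into Smith normal form to compare elementary divisors directly.
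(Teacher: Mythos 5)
Your proof is correct and follows essentially the same route the paper takes: the paper's proof simply cites \cite[Thm.~5.3]{Rod_avgroup} and remarks that everything follows from the standard norm-map/symmetric-idempotent arguments for abelian subvarieties of a principally polarized abelian variety in \cite[Ch.~12]{BL_cag}, which is exactly the argument you carry out (symmetric idempotents $\tfrac12(1\pm\tau)$ give complementary subvarieties, $A^\tau\cap A^{-\tau}\subset A[2]$ bounds the elementary divisors by $2$, and the Birkenhake--Lange matching of types forces the same number of $2$'s). The only nit is a citation label: the matching statement you want is \cite[Cor.~12.1.5]{BL_cag} (in the Prym chapter), used together with $Y\cap Z\cong\ker\phi_{\Theta|_Y}$ from the same section, rather than Cor.~5.3.6.
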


\begin{proof}
The assertion on polarizations follows from \cite[Thm.~ 5.3]{Rod_avgroup}.  The entire lemma in fact follows from standard arguments via norm maps for abelian subvarieties of principally polarized abelian varieties (see e.g., \cite[Ch.~12]{BL_cag}).
\end{proof}

\begin{rem}\label{R:dualPol}
Given a polarized abelian variety $(A,\Theta)$ of type $(d_1,\dots,d_g)$, the isogeny $\phi_\Theta:A\to A^\vee$ induces the isogeny  $\phi_\Theta^{-1}:A^\vee\to A$.  One can check that this is induced by a polarization $\Theta^\vee$ on $A^\vee$ of type $(\frac{d_g}{d_g}, \frac{d_g}{d_{g-1}},\dots,\frac{d_g}{d_1})$, which is characterized by the fact that up to translation it is the only ample line bundle on $A^\vee$ such that $\phi_{\Theta^\vee}=\phi_\Theta^{-1}$ and $\phi_\Theta^*\Theta^\vee \equiv_{\text{alg}} d_g\Theta$ (e.g.,  the proof
\footnote{The definition of $\psi_L$ in the first line of the proof of \cite[Prop.~14.4.1]{BL_cag} should be $\psi_L=d_1\phi_L^{-1}$, not $\psi_L=d_1d_g\phi_L^{-1}$.}
 of \cite[Prop.~14.4.1]{BL_cag}, where they take $d_1\phi_\Theta^{-1}$ 
 in place of $\phi_\Theta^{-1}$; see also \cite{BL_isoav}).  In particular, in the language of \lemmaref{L:Rod_avgroup}, we have $A/A^{\tau}\cong (A^{-\tau})^\vee$ and $A/A^{-\tau}\cong (A^{\tau})^\vee$ 
have induced polarization of type $(\underbrace{1,\cdots,1}_r, 2,\dots, 2)$. 
\end{rem}

Let $\calM$ be the moduli space of cubic surface pairs $(S, \Pi)$, where $S$ is a smooth cubic surface and $\Pi$ is a plane in $\bP^3$ meeting $S$ transversely, constructed for instance using GIT \cite{GMG_cubic2pair}. Let $\calA_4^{(1,1,1,2)}$ be the moduli space of abelian fourfolds with a polarization of type $(1,1,1,2)$. Note that $\dim \calM = 4+3=7$ and $\dim \calA_4^{(1,1,1,2)} = \binom{4+1}{2}=10$. We define via \lemmaref{lem:dimpm} a period map:  
\begin{equation}\label{E:Period}
\calP: \calM \longrightarrow \calA_4^{(1,1,1,2)}
\end{equation}
$$
 (S, \Pi) \mapsto JX^{-\tau}$$ 
which sends a cubic pair $(S, \Pi)$ to the anti-invariant part $JX^{-\tau}$ of the intermediate Jacobian of the Eckardt cubic threefold $X$ associated with $(S, \Pi)$.

\subsection{Lines on an Eckardt cubic}

To study the period map $\calP$, we need to analyze how the intermediate Jacobian $JX$ of the Eckardt cubic threefold $(X,p)$ coming from a cubic surface pair $(S, \Pi)$ decomposes with respect to the involution $\tau$. Our strategy will be to project $X$ from a $\tau$-invariant line to exhibit $JX$ as the Prym variety of the associated discriminant double cover. In the following lemma, we classify lines in $X$ that are invariant under the involution $\tau$.  

\begin{lemma} \label{lem:invline}
Let $X$ be a cubic threefold with an Eckardt point $p$ cut out by Equation (\ref{eqn:eckardt}). Let $\ell \subset X$ be a $\tau$-invariant line (i.e., preserved by $\tau$). Then either $\ell$ is a line on the cubic surface $S = (f=x_4=0)$, or $\ell$ passes through the Eckardt point $p=[0,0,0,0,1]$ and is contained in the cone $X \cap T_pX$ over the elliptic curve $E = (f=l=x_4=0)$. 

In other words, there are two types of $\tau$-invariant lines in $X$: the $27$ lines $\ell$ on the cubic surface $S \subset X$ (which are point-wise fixed by $\tau$), and the lines $\ell'$ that  pass through the Eckardt point $p$ (which have only the points  $p$, and $p':=\ell'\cap E$, fixed by $\tau$), and form the ruling of the cone $X \cap T_pX$ over the elliptic curve $E$.
\end{lemma}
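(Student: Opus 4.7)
My plan is to analyze the action of $\tau$ on an invariant line, using the explicit description of the fixed locus of $\tau$ on the ambient $\mathbb{P}^4$.

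\textbf{Step 1: Identify the fixed locus of $\tau$ on $\mathbb{P}^4$.} From the formula $\tau:[x_0,\dots,x_3,x_4]\mapsto [x_0,\dots,x_3,-x_4]$, the fixed locus on $\mathbb{P}^4$ decomposes as the hyperplane $H=(x_4=0)$ together with the isolated point $p=[0,0,0,0,1]$. In particular, any point of $\mathbb{P}^4$ fixed by $\tau$ either lies in $H$ or equals $p$.

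\textbf{Step 2: Dichotomy on the action of $\tau$ on a $\tau$-invariant line $\ell$.} Since $\tau$ has order $2$, its restriction to $\ell\cong\mathbb{P}^1$ is either the identity or a non-trivial involution of $\mathbb{P}^1$; in the latter case it has exactly two fixed points. I would then treat these two cases separately.

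\textbf{Step 3: The pointwise-fixed case.} If $\tau$ acts as the identity on $\ell$, then every point of $\ell$ lies in the $\tau$-fixed locus of $\mathbb{P}^4$. Since $\ell$ is infinite and $p$ is isolated, we must have $\ell\subset H$, and therefore $\ell\subset X\cap H=S$. So $\ell$ is one of the $27$ lines of the smooth cubic surface $S$.

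\textbf{Step 4: The non-trivial case.} Suppose $\tau|_\ell$ is non-trivial, with exactly two fixed points. These fixed points lie in $H\cup\{p\}$. Because $\ell\not\subset H$ (else we would be back in Step 3), the line $\ell$ meets $H$ in exactly one point, contributing one of the two fixed points. The second fixed point cannot lie in $H$, so it must be $p$ itself. Hence $p\in\ell$.

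\textbf{Step 5: Conclude $\ell$ lies in the cone $X\cap T_pX$.} Any line contained in $X$ and passing through $p$ is necessarily tangent to $X$ at $p$, hence contained in $T_pX$. Combined with $\ell\subset X$, this gives $\ell\subset X\cap T_pX$, which by the description of the Eckardt point is the cone with vertex $p$ over $E$. So $\ell$ is a ruling of this cone, parametrized by its intersection with $E$.

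I do not expect any of these steps to present real difficulty; the main content is simply the observation in Step 1 about the shape of the fixed locus, together with the elementary fact that a non-trivial involution of $\mathbb{P}^1$ has exactly two fixed points. No dimension count or deeper input is required.
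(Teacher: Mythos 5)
Your proposal is correct and follows essentially the same argument as the paper: identify the fixed locus of $\tau$ on $\mathbb{P}^4$ as the hyperplane $(x_4=0)$ plus the isolated point $p$, split into the pointwise-fixed case (giving $\ell\subset S$) and the two-fixed-point case (forcing $p\in\ell$), and then use $\ell\subset X\Rightarrow \ell\subset T_pX$ to place $\ell$ in the cone over $E$. No issues.
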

\begin{proof}
The fixed locus of the involution $\tau: x_4 \mapsto -x_4$ of $\bP^4$ consists of the Eckardt point $p$ and the hyperplane $(x_4=0)$. If $\tau$ fixes every point of $\ell \subset X$, then $\ell \subset (x_4=0)$ which implies $\ell \subset S$. Otherwise, $\tau$ fixes two points of $\ell \subset X$. One of the points needs to be off the hyperplane $(x_4=0)$ and hence must be the Eckardt point $p$.  Now, since $\ell\subset X$, we have that $\ell\subset T_pX$, so that by definition, 
the other fixed point of $\ell$  is on the elliptic curve $E$.    
\end{proof}

\section{Cubic surfaces and cubic threefolds as fibrations in quadrics} \label{sec:precubic}

In this section, we recall some facts about cubic hypersurfaces and fibrations in quadrics. The key point is to connect constructions that arise in the projection of  a cubic threefold from a line,  to the case of the projection of a cubic surface from a point or line.  This connection between the two cases is central to our analysis of the intermediate Jacobian of an Eckardt cubic threefold. 

\subsection{Cubic surfaces as fibrations in quadrics}

\subsubsection{Projecting a cubic surface from a point} \label{S:CubSurfPoint}

Let $(S,p')$ be a smooth cubic surface $S\subset \mathbb P^3$ together with a point $p'\in S$.  Projecting $S$ from the point $p'$ determines a rational map $S\dashrightarrow \mathbb P^2$, which is resolved by blowing up the point $p'$, yielding a morphism   
$$
\pi_{p'}:\operatorname{Bl}_{p'}S\longrightarrow \mathbb P^2.
$$
The discriminant curve (the locus in $\mathbb P^2$ where the fibers of the projection are singular) is a plane quartic $C$, which is smooth if and only if $p'$ does not lie on a line of $S$, in which case $\pi_{p'}$ is a  fibration in $0$-dimensional quadrics (equivalently, a double cover of $\mathbb P^2$).

Focusing on the case where 
the discriminant curve (equivalently, the branch locus of $\pi_{p'}$) is smooth, it is well-known that the $27$ lines on the cubic surface, together with the exceptional divisor on the blow-up $\operatorname{Bl}_{p'}S$ map isomorphically under $\pi_q$ to the $28$ bitangents (odd theta characteristics) of $C$. 
 In particular, pairs $(S,p')$ consisting of smooth cubic surfaces with a point not contained in a line on the surface, up to projective linear transformations,  are in bijection with pairs $(C,\kappa_C)$, where $C$ is a smooth plane quartic, and $\kappa_C$ is an odd theta characteristic, up to projective linear transformations.  One recovers $(S,q)$ from $(C,\kappa_C)$ by taking the double cover of $\widehat S\to \mathbb P^2$ branched along $C$, and then, viewing $\kappa_C$ as a bitangent to $C$, one blows down the unique $(-1)$-curve in $\widehat S$ that is in the pre-image of the bitangent line, to obtain $S$ (see e.g., \cite[\S 4.2.4, 4.3.7]{Huybrechts_cubic}).

For certain arguments later, it will be convenient to describe this in coordinates.  For this discussion, let $S$ be a smooth cubic surface, and let $p'\in S$ be a point (which for now may be contained in a line on $S$).  
Without loss of generality, we assume that   $p'=[0,0,0,1]$. To project $S$ from $p'$ to $\bP^2_{x_0,x_1,x_2}$, we write the equation of $S$ as
\begin{equation}\label{E:CubSurfPoint}
k(x_0,x_1,x_2)x_3^2 + 2q(x_0,x_1,x_2)x_3 + c(x_0,x_1,x_2)=0
\end{equation}
where $k$, $q$ and $c$ are polynomials of degree $1$, $2$ and $3$ respectively. 
Note that $p'$ is contained in a line of $S$ if and only if  there is a fiber of $\pi_{p'}$ of dimension $1$; i.e., if and only if the locus $(k=q=c=0)$ is nonempty.

Let $N$ be the matrix 
\begin{equation}\label{E:Nsurf}
N =\begin{pmatrix}  
k &  q\\
q &  c\\
\end{pmatrix}.
\end{equation} 
The discriminant curve $C\subset \mathbb P^2$ is then defined by $\det{N}=kc-q^2=0$:
\begin{equation}\label{E:Cdef}
C=(\det{N}=0)=(kc-q^2=0).
\end{equation}
We can now see that $p'$ is not contained in a line of $S$ if and only if the discriminant curve $C$ is smooth.
Indeed, suppose first that  $p'$ is not contained in a line of $S$.  Then $\pi_{p'}$ is a branched double cover, and if $C$ had a node, then the branched double cover, $\operatorname{Bl}_{p'}S$,  would be singular, giving a contradiction.  Conversely, if $p'$ is contained in a line of $S$, then we saw above that $(k=q=c=0)$ is nonempty. But then at a point in that locus, one can easily see that the discriminant $(kc-q^2=0)$ vanishes to order at least $2$.  

We note also that the theta characteristic $\kappa_C$  on $C$ is defined by the  bitangent line $(k=0)$:
\begin{equation}\label{E:kappaCdef}
\kappa_C = \sqrt{(k=0)}
\end{equation}
where by the square root, we mean the unique effective divisor with twice the divisor giving the divisor $(k=0)$.

\begin{remark}
Note that \cite[Prop.~4.2]{Beauville_dethyper} gives another approach to recovering the smooth cubic surface $S$ and point $p'$ from the smooth plane quartic $C$ and the odd theta characteristic $\kappa_C$.   On the one hand, given $(S,p')$ in the coordinates of \eqref{E:CubSurfPoint}, there is a short exact sequence
\begin{equation}\label{E:KappaSurf}
\xymatrix{
0\ar[r]& \mathcal O_{\mathbb P^2}(-2)\oplus \mathcal O_{\mathbb P^2}(-3) \ar[r]^N & \mathcal O_{\mathbb P^2}(-1)\oplus \mathcal O_{\mathbb P^2}  \ar[r]& \kappa_C \ar[r] & 0. 
}
\end{equation}
Conversely, given a smooth plane quartic $C$ and an odd theta characteristic $\kappa_C$, there is a presentation of $\kappa_C$ as in \eqref{E:KappaSurf}, with $N$ as in \eqref{E:Nsurf}.
The cubic surface defined by \eqref{E:CubSurfPoint} using the entries of $N$  is smooth, and together with point $p'=[0,0,0,1]$,  the associated discriminant plane quartic is $C$ and the associated theta characteristic is $\kappa_C$.  
\end{remark}

\subsubsection{Projecting a cubic surface from a line}
Let $(S,\ell)$ be a smooth cubic surface $S\subset \mathbb P^3$ together with a line  $\ell\subset  S$.  Projecting $S$ from the line $\ell$ determines a rational map $\pi_\ell:S\dashrightarrow \mathbb P^1$; as the induced projection $\mathbb P^3\to \mathbb P^1$ is resolved by blowing up the line $\ell$, which is a divisor on $S$,  we have that $\pi_\ell$ is a morphism, yielding a fibration in conics 
$$
\pi_\ell: S\longrightarrow \mathbb P^1.
$$
The discriminant  is a degree $5$ divisor $D\subset \mathbb P^1$. 
 A second degeneracy locus  $Q\subset \mathbb P^1$ can be defined as the degree $2$ divisor in $\mathbb P^1$ over which the associated conics in $S$  meet $\ell$ with multiplicity $2$ at a point (we can put a well-defined scheme structure on $Q$; see the description below in coordinates).    
 
We now describe this in coordinates.  We may  assume $\ell \subset \bP^3$ is cut out by $x_2=x_3=0$, so that the equation of $S$ is of the form
\begin{equation}\label{E:CubSurfLine}
l_1(x_2,x_3)x_0^2+2l_2(x_2,x_3)x_0x_1+l_3(x_2,x_3)x_1^2+
\end{equation}
$$
2q_1(x_2,x_3)x_0+2q_2(x_2,x_3)x_1+c(x_2,x_3)=0
$$
where $l_i$, $q_j$ and $c$ are homogeneous polynomials in $x_2,x_3$ of degree $1$, $2$ and $3$ respectively.   Let $M$ be the matrix
\begin{equation}\label{E:Msurf}
M = \begin{pmatrix}  
l_1 &l_2 &q_1\\
l_2&l_3&q_2\\
q_1&q_2&c\\
\end{pmatrix}.
\end{equation}
The degree $5$ discriminant $D\subset \mathbb P^1_{x_2,x_3}$ for the fibration in quadrics is then defined by the determinant $\det M$:
\begin{equation}\label{E:Dsurf}
D=(\det M=0)
\end{equation}
 and the degree $2$ discriminant divisor $Q$  over which the associated conics in $S$  meet $\ell$ with multiplicity $2$ at a point is given by    
 determinant of the $(3,3)$-minor:
 \begin{equation}\label{E:Msurf33}
M_{3,3} = \begin{pmatrix}  
l_1 &l_2 \\
l_2&l_3\\
\end{pmatrix}
\end{equation}
 \begin{equation}\label{E:Q1}
  Q=(\det M_{3,3}=0)=(l_1l_3-l_2^2=0).
\end{equation}

\subsubsection{Projecting a cubic surface with an elliptic curve from a line}
Suppose now we have a triple $(S,E,\ell)$ consisting of a cubic surface $S$, a smooth hyperplane section $E\subset S$ and a line $\ell\subset S$.  In this case we obtain another discriminant divisor $B\subset \mathbb P^1$, namely the degree $4$ branch divisor of the the restriction $\pi_\ell|_E:E\to \mathbb P^1$.  Note that since $E$ is smooth, the branch locus $B$ consists of $4$ distinct points.

In coordinates, again, 
we may  assume $\ell \subset \bP^3$ is cut out by $x_2=x_3=0$.  Moreover, since $E$ is irreducible (in fact smooth), it does not contain $\ell$, so the linear equation $(l(x_0,x_1,x_2,x_3)=0)$  cutting $E$ on $S$ must contain $x_0$ or $x_1$ with non-zero coefficient.  Interchanging $x_0$ and $x_1$, we may assume it is $x_0$.  Then after a change of coordinates, $x_0\mapsto l$, $x_i\mapsto x_i$, $i=1,2,3$, we may assume that $l=x_0$.  Thus the equation for $S$ in these coordinates is given by \eqref{E:CubSurfLine}, and the equation for $E$ on $S$ is given by $(x_0=0)$:
\begin{equation}
E=(x_0=0)\cap S.
\end{equation}

Consequently,  the branch divisor $B$ is obtained  by setting $x_0=0$ in \eqref{E:CubSurfLine}, and considering the discriminant, it is  therefore given by the determinant of the minor: 
\begin{equation}\label{E:Msurf11}
M_{1,1} = \begin{pmatrix}  
l_3&q_2\\
q_2&c\\
\end{pmatrix}
\end{equation}
\begin{equation}\label{E:B}
B=(\det M_{1,1}=0)=(l_3c-q_2^2=0). 
\end{equation}

\begin{cnd}[Generic Eckardt cubic threefold] \label{cond:generic}  
Let $(X,p,\ell)$ be a triple consisting of an Eckart cubic threefold $(X,p)$ with associated cubic surface pair $(S,E)$, together with a line $\ell \subset S$.    The associated discriminant divisors $Q$ \eqref{E:Q1} and $B$ \eqref{E:B} associated to the triple $(S,E,\ell)$ have disjoint support.  
\end{cnd} 

 We say that an Eckardt cubic $(X,p)$  satisfies \cndref{cond:generic}, if  there exists a line $\ell\subset S$ such that $(X,p)$ and $\ell$ satisfy \cndref{cond:generic}.
 
 \begin{remark}\label{R:condGen}
We see, for instance from \eqref{E:Q1} and \eqref{E:B}, that the locus of    Eckardt cubics threefolds $(X,p)$ that  satisfy \cndref{cond:generic} is open in moduli.
\end{remark}

\begin{rem}
Although we do not need this, 
one can show that if $(X,p,\ell)$ satisfies \cndref{cond:generic}, then the supports of $Q$ and $B$ are both reduced.
Moreover, one can show that if $(X,p)$ satisfies \cndref{cond:generic}, then for every $\ell\subset S$, we have that $(X,p,\ell)$ satisfies \cndref{cond:generic}.  
\end{rem}

\subsection{Cubic threefolds as fibrations in conics}\label{S:CubSolid}

Let $(X,\ell)$ be a cubic threefold $X\subset \mathbb P^4$ together with a line   $\ell\subset  X$.  
Projecting $X$ from the line $\ell$ determines a rational map $X\dashrightarrow \mathbb P^2$, which is resolved by blowing up the line $\ell$, yielding a fibration in conics:
\begin{equation}\label{E:pi-ell-solid}
\pi_\ell:\operatorname{Bl}_\ell X\longrightarrow \mathbb P^2.
\end{equation}
The discriminant  is a plane quintic $D\subset \mathbb P^2$ with at worst nodal singularities, and there is an associated pseudo-double cover  $\widetilde D\to D$ 
determined by interchanging the lines in the fiber of $\pi_\ell$ over the points of $D$.

Recall that a  double cover of stable curves $\widetilde D\to D$ with associated involution $\iota:\widetilde D\to \widetilde D$ is called \emph{admissible} if the only fixed points of the involution are nodes, and at each node of $\widetilde D$ fixed by $\iota$, the local branches of $\widetilde D$ are not interchanged by $\iota$.
An  admissible double cover is said to be \emph{allowable}
(\cite{DS_prym, Donagi_prymfiber}, \cite[p.173, (**)]{Beauville_schottky})
 if the associated Prym is compact, and an allowable double cover is said to be a \emph{pseudo-double cover} (\cite[Def.~0.3.1]{Beauville_prymij}, \cite[p.157, (*)]{Beauville_schottky})  if, moreover, the fixed points of the associated involution $\iota$ on $\widetilde D$ are exactly the singular points of $\widetilde D$.

For general $\ell$, one has that $D$ is smooth and $\widetilde D\to D$ is \'etale. Associated to the double cover $\widetilde D\to D$ is a  rank-$1$ torsion-free sheaf  $\eta_D$ on $D$ with $\mathcal Hom(\eta_D,\mathcal O_D)\cong \mathcal \eta_D$, and therefore a theta characteristic $\kappa_D=\eta_D\otimes \mathcal O_D(1)$ ($\mathcal Hom (\kappa_D, \omega_D)\cong \kappa_D$), which is odd ($h^0(D,\kappa_D)=1$). One has that $\eta_D$ (and therefore $\kappa_D$) is a line bundle at a point $d\in D$ if and only if $\widetilde D \to D$ is \'etale over $d$. In particular, pairs $(X,\ell)$ consisting of cubic threefold with a line, up to projective linear transformations,  are in bijection with pairs $(D,\kappa_D)$, where $D$ is a stable plane quintic, and $\kappa_D$ is an odd theta characteristic, up to projective linear transformations.  These results are all due to Beauville \cite{Beauville_prymij}; we refer the reader to \cite{CMF} where this is discussed further.  In particular, we are using \cite[Thm.~4.1, Prop.~4.2]{CMF}.
 We will explain this in more detail below, where we express everything in coordinates. 
 A second degeneracy locus  $Q\subset \mathbb P^2$ can be defined as the degree $2$ divisor in $\mathbb P^2$ over which the associated conics in $X$  meet $\ell$ with multiplicity $2$ at a point (we can put a well-defined scheme structure on $Q$; see the description below in coordinates).    
  
We now describe this in coordinates.  We may  assume $\ell \subset \bP^4$ is cut out by $x_2=x_3=x_4=0$, so that the equation of $X$ is of the form
\begin{equation}\label{E:CubSolid}
l_1(x_2,x_3,x_4)x_0^2+2l_2(x_2,x_3,x_4)x_0x_1+l_3(x_2,x_3,x_4)x_1^2+
\end{equation}
$$
2q_1(x_2,x_3,x_4)x_0+2q_2(x_2,x_3,x_4)x_1+c(x_2,x_3,x_4)=0
$$
where $l_i$, $q_j$ and $c$ are homogeneous polynomials in $x_2,x_3,x_4$ of degree $1$, $2$ and $3$ respectively.  We note the similarity with Equation \eqref{E:CubSurfLine}; namely, that equation is given by setting $x_4=0$ in \eqref{E:CubSolid} above.  Geometrically, this corresponds to restricting the fibration in conics here to the line $(x_4=0)\subset \mathbb P^2_{x_2,x_3,x_4}$, or, equivalently, to projecting the cubic surface $S=X\cap (x_4=0)$ from the point $p'=\ell\cap S$.   We will use similar notation to facilitate the translation from one case to the other.

Let $M$ be the matrix
\begin{equation}\label{E:Msolid}
M = \begin{pmatrix}  
l_1 &l_2 &q_1\\
l_2&l_3&q_2\\
q_1&q_2&c\\
\end{pmatrix}.
\end{equation}
The degree $5$ discriminant $D\subset \mathbb P^2$ for the fibration in conics is then defined by the determinant $\det M$:
\begin{equation}\label{E:Deqsolid}
D=(\det M=0).
\end{equation}

The fact that $D$ has at worst nodes follows from the fact that $X$ is smooth (e.g., \cite[Prop.~1.2(iii)]{Beauville_prymij}), and the fact that $D$ is stable can then be deduced from  Bezout's theorem. 
In fact, since the total space $\operatorname{Bl}_\ell X$ is smooth, we have that at any point of $\mathbb P^2$, the corank of $M$ is at most $2$, and  $d\in D$ is a smooth point of $D$ if and only if  the corank of $M$ at $d$  is equal to $1$.  

The fact that $\widetilde D\to D$ is a pseudo-double cover is shown in \cite[Prop.~1.5]{Beauville_prymij}, and it is shown in  \cite{CG_cubic3} (see also \cite[Lem.~1.5]{Murre_cubic3})  
that for a general line, i.e., outside of a divisor in the Fano surface of lines on $X$,   the discriminant  $D$ is smooth and the double cover is connected and \'etale.

Moreover, Beauville has shown there is a short exact sequence (see \cite[Thm.~4.1]{CMF})
\begin{equation}\label{E:KappaSolid}
\xymatrix{
0\ar[r]& \mathcal O_{\mathbb P^2}(-2)^{\oplus 2}\oplus \mathcal O_{\mathbb P^2}(-3) \ar[r]^<>(0.5)M & \mathcal O_{\mathbb P^2}(-1)^{\oplus 2}\oplus \mathcal O_{\mathbb P^2}  \ar[r]& \kappa_D \ar[r] & 0. 
}
\end{equation}
Conversely, given a pair $(D,\kappa_D)$ with $D$ a stable plane quintic and $\eta_D$ an odd theta characteristic, one obtains a presentation  of $\kappa_D$ as in \eqref{E:KappaSolid}, for a matrix $M$ as in \eqref{E:Msolid}.   The cubic threefold $X$ defined by Equation \eqref{E:CubSolid} using the entries of $M$ is smooth, and the projection from the line $\ell=(x_2=x_3=x_4=0)$ gives the discriminant $D$ and odd theta characteristic $\eta_D$ up to projective linear transformations (see \cite[Prop.~4.2]{CMF}). 

The degree $2$ discriminant divisor $Q$  over which the associated conics in $S$  meet $\ell$ with multiplicity $2$ at a point is given by    the 
 determinant of the $(3,3)$-minor:
 \begin{equation}\label{E:M3fold33}
M_{3,3} = \begin{pmatrix}  
l_1 &l_2 \\
l_2&l_3\\
\end{pmatrix}
\end{equation}
 \begin{equation}\label{E:Q2}
  Q=(\det M_{3,3}=0)=(l_1l_3-l_2^2=0).
\end{equation}

The odd theta characteristic $\eta_D$ has yet another description, in terms of $Q$.  Assuming that $Q\cap D$ meet at smooth points of $Q$ and $D$, then one has that the intersection multiplicity at each point of the intersection is even, and we have that
$$
\kappa_D=\sqrt{(\det M_{3,3}=0)}=\sqrt{(l_1l_3-l_2^2=0)}
$$
where by the square root we mean the unique effective divisor such that twice the divisor is the divisor $(l_1l_3-l_2^2)=0$ on $D$.  This is explained in the proof \cite[Prop.~4.2]{CMF}, where in this part of the proof it is assumed that $Q$ and $D$ are smooth; however, the computation is local, and requires only that $Q$ and $D$ meet at smooth points. 

Finally we recall \cite[Thm.~2.1(iii)]{Beauville_prymij}: there is a canonical isomorphism of principally polarized abelian varieties 
\begin{equation}\label{E:P=IJ}
P(\widetilde D,D)\cong JX.
\end{equation}

\section{Eckardt cubic threefolds as fibrations in conics 1: lines through the Eckardt point} \label{sec:projcone}

Let $X$ be a cubic threefold with an Eckardt point $p$ cut out by Equation (\ref{eqn:eckardt}). As in Section \ref{sec:eckardtcubic3}, we denote by $\tau$ the involution associated with the Eckardt point $p$ (see (\ref{eqn:tau})) and set $(S, \Pi)$ to be the corresponding cubic surface pair (see (\ref{eqn:cubic2pair})). In this section, we give another description of the anti-invariant part $JX^{-\tau}$  of the intermediate Jacobian $JX$ (see (\ref{eqn:invantiinv})) by projecting $X$ from a $\tau$-invariant line $\ell' \subset X$ passing through the Eckardt point $p$ (note that the line $\ell'$ belongs to the cone $X \cap T_pX$ over the elliptic curve $E = S \cap \Pi$; see \lemmaref{lem:invline}). 

We now revisit \S \ref{S:CubSolid} in the case of an Eckardt cubic threefold and a line through the Eckardt point.  More precisely, 
let $(X,p)$ be an Eckardt cubic threefold, and let $\ell' \subset X$ be a line passing through the Eckardt point $p$.
Let us choose coordinates so that   $X$ is given by \eqref{eqn:eckardt} $f(x_0,\dots,x_3)+l(x_0,\dots,x_3)x_4^2=0$, with Eckardt point
$$
p=[0,0,0,0,1].
$$   
Without loss of generality, we assume that $\ell'$ intersects the plane $(x_4=0)$ at the point 
$$
p'=[0,0,0,1,0].
$$ 
Because $\ell'$ passes through the Eckardt point $p=[0,0,0,0,1]$, the equation of $\ell'$ is 
$$
\ell'=(x_0=x_1=x_2=0).
$$

We may now write the equation of $X$ in the form $\alpha x_3^3+k(x_0,x_1,x_2)x_3^2 + 2q(x_0,x_1,x_2)x_3 + c(x_0,x_1,x_2) + l(x_0,x_1,x_2,x_3)x_4^2=0$, where $k$, $q$ and $c$ are polynomials of degree $1$, $2$ and $3$ respectively.  Since $X$ contains the line $\ell'=\{[0,0,0,x_3,x_4]\}$, one finds that $\alpha=0$, and the coefficient of $x_3$ in $l(x_0,\dots,x_3)$ is zero.
 
Thus we may write the equation of $X$ as 
\begin{equation} \label{eqn:X'}
\underbrace{k(x_0,x_1,x_2)x_3^2 + 2q(x_0,x_1,x_2)x_3 + c(x_0,x_1,x_2)}_{f(x_0,\dots,x_3)} + l(x_0,x_1,x_2)x_4^2=0
\end{equation} 
The cubic surface $S$ is given by 
$$
S=(f=x_4=0)
$$
and the hyperplane section $E\subset S$ is given by 
$$
E=(l=0)\cap S=(l=f=x_4=0).
$$

We now project $X$ from the line $\ell'$ to the complementary plane $\mathbb P^2_{x_0,x_1,x_2}=(x_3=x_4=0)$, and obtain a conic bundle $\pi_{\ell'}: \operatorname{Bl}_{\ell'}X\to \mathbb P^2_{x_0,x_1,x_2}$.  Considering the form of \eqref{eqn:X'} and \eqref{E:CubSolid},  
we find that the associated matrix $M$ \eqref{E:Msolid} 
is 
$$
M=\begin{pmatrix}  
k & 0 & q\\
0 & l & 0\\
q & 0 & c\\
\end{pmatrix}
$$
and therefore, from \eqref{E:Deqsolid}, the discriminant plane quintic $D$ of the conic bundle $\operatorname{Bl}_{\ell'}X \rightarrow \bP^2$ is given by 
\begin{equation} \label{eqn:CL}
\det \begin{pmatrix}  
k & 0 & q\\
0 & l & 0\\
q & 0 & c\\
\end{pmatrix} = l(kc-q^2) = 0.
\end{equation}

Clearly, the (nodal) discriminant plane quintic $D\subset \mathbb P^2$  consists of two components: 
 a line $L$
\begin{equation}\label{E:Ldef}
L=(l=0)
\end{equation}
and
a (possibly reducible) plane quartic  $C$ 
\begin{equation}\label{E:pq-EckCub}
C=(kc-q^2=0).
\end{equation}

Geometrically, the plane quartic  $C$ is the discriminant for the projection of the cubic surface $S$ from the point $p'\in S$. 
In fact, given the form of $f$ in \eqref{eqn:X'}, and comparing with \eqref{E:CubSurfPoint}, then for the map $\pi_{p'}:\operatorname{Bl}_{p'}S\to \mathbb P^2$, the associated matrix $N$ \eqref{E:Nsurf} is given as
\begin{equation*}
N =\begin{pmatrix}  
k &  q\\
q &  c\\
\end{pmatrix}
\end{equation*} 
so that following \eqref{E:Cdef}, the discriminant plane quartic for $\operatorname{Bl}_{p'}S\to \mathbb P^2$ is given as in \eqref{E:pq-EckCub}.  Recall that the odd theta characteristic $\kappa_C$ determining the pair $(S,p')$, described in  \eqref{E:kappaCdef}, is given by 
\begin{equation}\label{E:kappCEck}
\kappa_C=\sqrt{(k=0)}
\end{equation}
i.e., the pair $(S,p')$  is determined by the bitangent to $C$ cut by $(k=0)$.  

\begin{proposition}[Projecting from a line with $p\in \ell'\subset X$] \label{prop:projcone}
 Let $(X,p)$ be an Eckardt cubic threefold, and let $\ell'\subset X$ be a line passing through $p$.  The following are equivalent:
\begin{enumerate}
\item The line $\ell'$ does not intersect any lines on $S$. (From \lemmaref{lem:invline}, the lines through $p$ are parameterized by their intersection with $E\subset S$, so the general line through $p$ satisfies this condition.)
\item The discriminant plane quintic $D$ is a union $D=C\cup L$ of a smooth plane quartic $C$ and a transverse line $L$, and the associated double cover $\widetilde D\to D$ is a pseudo-double cover. 
\end{enumerate}
\end{proposition}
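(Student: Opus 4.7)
The plan is to exploit the explicit factorization $D = L \cup C$ of \eqref{eqn:CL}, where $L = (l=0)$ and $C = (kc - q^2 = 0)$. The crucial observation is that, comparing \eqref{eqn:X'} with \eqref{E:CubSurfPoint}, the quartic $C$ is exactly the discriminant \eqref{E:Cdef} of the projection $\pi_{p'}: \operatorname{Bl}_{p'}S \to \mathbb P^2$ of the cubic surface $S$ from the point $p' := \ell' \cap (x_4 = 0) \in S$. This reduces the whole proposition to facts already recorded in \S \ref{S:CubSurfPoint} together with Beauville's results on conic bundles coming from smooth cubic threefolds.

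First I would observe that the two geometric conditions, on $\ell'$ and on $p'$, coincide. Every line of $S$ lies in the hyperplane $(x_4 = 0)$, and $\ell' \cap (x_4=0) = \{p'\}$; therefore a line $m \subset S$ meets $\ell'$ if and only if $p' \in m$. Hence (1) is equivalent to saying that $p'$ does not lie on any line of $S$, which by \S \ref{S:CubSurfPoint} is in turn equivalent to smoothness of $C$. So (1) is equivalent to the first piece of (2), and the reverse implication $(2) \Rightarrow (1)$ is immediate since (2) explicitly contains smoothness of $C$.

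It then remains to deduce the remaining ingredients of (2) --- transversality of $L$ to $C$, and the pseudo-double cover condition --- from smoothness of $X$ alone. Since $\operatorname{Bl}_{\ell'}X$ is smooth, \cite[Prop.~1.2(iii)]{Beauville_prymij} forces $D$ to be a reduced plane quintic with at worst nodal singularities, and \cite[Prop.~1.5]{Beauville_prymij} gives directly that $\widetilde D \to D$ is a pseudo-double cover. Assuming $C$ smooth, the singular locus of $D = C \cup L$ is concentrated at $C \cap L$; a non-transverse intersection point would produce a tacnode (or worse), contradicting nodality, so by Bezout $L$ meets $C$ transversely in $4$ distinct points. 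The only step requiring any real care is the coordinate dictionary between the projection of $X$ from $\ell'$ and the projection of $S$ from $p'$, but the form \eqref{eqn:X'} was chosen precisely to make this transparent, so no genuine obstacle arises.
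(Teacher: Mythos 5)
Your proof is correct and takes essentially the same route as the paper's: both reduce the proposition to the smoothness of $C$ by identifying $C$ with the discriminant of the projection of $S$ from $p'=\ell'\cap(x_4=0)$ and invoking \S\ref{S:CubSurfPoint}, while the nodality of $D$, the transversality of $L$, and the pseudo-double cover property are all obtained from Beauville's results for smooth cubic threefolds. Your write-up is in fact slightly more explicit than the paper's on two minor points, namely the equivalence of condition (1) with ``$p'$ does not lie on a line of $S$'' and the deduction of transversality from nodality.
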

\begin{proof}
As explained in \S \ref{S:CubSolid}, the cover $\widetilde D\to D$ is always a pseudo-double cover, and as explained above, in our situation,  the discriminant $D$ is always the union of a possibly reducible nodal plane quartic with a transverse line. In other words,  the content of the proposition is to establish exactly  when the plane quartic $C$ is smooth.  From the discussion above, the plane quartic $C$ is the discriminant for the projection of $S$ from $p'$.  We saw in \S \ref{S:CubSurfPoint} that the discriminant $C$ is smooth if and only if $p'$ does not lie on a line of $S$.  
\end{proof}

\begin{lem}\label{L:kappaCEck} 
Let $(X,p)$ be an Eckardt cubic threefold, and let $\ell'\subset X$ be a line passing through $p$. We project $X$ from $\ell'$ as in \propositionref{prop:projcone}, and suppose that the quartic component $C$ is smooth. Let $\widetilde C \to C$ be the associated double cover coming from the conic bundle $\operatorname{Bl}_{\ell'}X\to \mathbb P^2$ (in other words, we restrict the double cover $\widetilde D\to D$ in \propositionref{prop:projcone} to $C$). The double cover $\widetilde C\to C$ determines the odd theta characteristic $\kappa_C$, and conversely. 
\end{lem}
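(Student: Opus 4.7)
The plan is to describe the double cover $\widetilde C \to C$ explicitly from the conic bundle matrix in Equation \eqref{eqn:CL}, and then identify its associated line bundle with the theta characteristic $\kappa_C$. First, from Equation \eqref{eqn:X'}, the conic in the fiber of $\pi_{\ell'}$ over $[a]\in \mathbb P^2_{[x_0:x_1:x_2]}$ has equation $k(a)x_3^2+2q(a)tx_3+c(a)t^2+l(a)x_4^2=0$. Over a point $a\in C\setminus L$ (where $k(a)c(a)=q(a)^2$ and $l(a)\ne 0$), completing the square yields $k(x_3+(q/k)t)^2+lx_4^2=0$, which factors as the two lines $kx_3+qt\pm y\,x_4=0$, where $y$ is a local section of $\mathcal O_C(1)$ with $y^2=-kl|_C$. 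Thus $\widetilde C\to C$ is the double cover determined by the section $-kl|_C\in H^0(C,\mathcal O_C(2))$.

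Next I would determine the true branch locus. The section $-kl|_C$ vanishes along $(k|_C=0)+(l|_C=0)$. By \eqref{E:kappCEck}, $(k=0)$ is a bitangent to $C$, so $(k|_C=0)=2\kappa_C$ has even multiplicity at its two points and contributes only fake branching: the cover is \'etale at these points. The transversality $L\pitchfork C$ (which one verifies holds for a general $\ell'$ satisfying the hypothesis of \propositionref{prop:projcone}) ensures that $B:=(l|_C=0)$ is reduced of degree $4$, and this is the actual branch locus of $\widetilde C\to C$.

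The heart of the argument is the identification of the line bundle of the cover. Let $k_0\in H^0(C,\kappa_C)$ be the unique section (up to scalar) with $k_0^2=k|_C$ under the identification $\kappa_C^{\otimes 2}\cong \mathcal O_C(1)=K_C$; such $k_0$ exists because $(k|_C=0)=2\kappa_C$ by \eqref{E:kappCEck}. The substitution $y=k_0y'$ converts $y^2=-kl|_C$ into $(y')^2=-l|_C$, where $y'$ is now a section of $\mathcal O_C(1)\otimes\kappa_C^{-1}=\kappa_C$. This exhibits $\widetilde C\to C$ as $\operatorname{Spec}_C(\mathcal O_C\oplus\kappa_C^{-1})$ with algebra structure given by the section $-l|_C\in H^0(C,\kappa_C^{\otimes 2})$; in particular, the line bundle of the cover is $\kappa_C$, which proves that $\widetilde C\to C$ determines $\kappa_C$. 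For the converse, given $(C,\kappa_C)$ together with the reduced hyperplane divisor $B$ (encoded by $L$, hence by $\Pi$), the same $\operatorname{Spec}$ construction reconstructs $\widetilde C\to C$.

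The main obstacle I expect is the third step: correctly tracking the fake branching at $(k|_C=0)=2\kappa_C$ and globally normalizing by the section $k_0\in H^0(C,\kappa_C)$. This normalization is what upgrades the naive candidate $\mathcal O_C(1)$ to the finer theta characteristic $\kappa_C$, and it is the mechanism through which the bitangent data determining $\kappa_C$ appears in the cover.
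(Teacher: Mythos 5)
Your proof is correct and follows essentially the same route as the paper: both identify $\widetilde C \to C$ with the quadratic extension of $C$ defined by $\sqrt{kl|_C}$ (the paper by citing Beauville's description of the function field of the discriminant double cover via the minor $\det M_{3,3}=kl$, you by factoring the degenerate conics directly) and then match this with $\kappa_C$ using $(k|_C=0)=2\kappa_C$. One small remark: the transversality $L\pitchfork C$ that you hedge as holding only for general $\ell'$ is in fact automatic whenever $C$ is smooth, since the full discriminant $D=C\cup L$ of the conic bundle on the smooth total space $\operatorname{Bl}_{\ell'}X$ has at worst nodes; so your argument covers every $\ell'$ allowed by the lemma.
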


\begin{proof}
The data of the double cover $\widetilde C\to C$ can be described equivalently as a triple
$(C,\beta,\mathcal L,s)$, where  $\beta= C \cap L$ is the branch divisor on $C$, $\mathcal L$ is a line bundle on $C$ with an isomorphism  $\mathcal L^{\otimes 2}\cong \mathcal O_C(\beta)$, and $s$ is section of $\mathcal O_C(\beta)$ vanishing on $\beta$.    The claim is that $\mathcal L\cong \kappa_C$; i.e., that $(C,\beta, \mathcal L,s)$ gives the same cover as $(C,\beta,\kappa_C,s)$.  Indeed, the cover associated to $(C,\beta,\kappa_C,s)$ has function field $K(C)(\sqrt{k})$ \eqref{E:kappCEck}; here we are working on the open subset of $C$ given by $(l\ne 0)$.  At the same time, \cite[Lem.~1.6]{Beauville_prymij} states that the cover of the full discriminant $\widetilde D\to D$, when restricted to the open subset of $D$ given by $(l\ne 0)$ (which is exactly the cover $\widetilde C\to C$ restricted to the open subset $(l\ne 0)$) has function field $K(C)(\sqrt{k})$; i.e., considering \eqref{eqn:CL}, the minor $\det M_{3,3}=kl$, and we can choose coordinates on the plane so that local affine coordinates are given by setting $l=1$.\end{proof}

Just as Beauville (see \S \ref{S:CubSolid}) has shown that the data of a pair $(X,\ell)$ consisting of a smooth cubic threefold $X$ and a line $\ell\subset X$ is equivalent to the data $(D,\eta_D)$ consisting of a nodal plane quintic $D$ and an odd theta characteristic $\kappa_D$, we have a similar result for Eckardt cubic threefolds. 

\begin{theorem}[Reconstructing an Eckardt cubic from a plane quartic]\label{T:Eck-from-quart}
Given a triple $(C,\kappa_C,L)$ consisting of a smooth plane quartic, an odd theta characteristic (bitangent) $\kappa_C$ on $C$,  and a line $L$ meeting $C$ transversely, one can associate a triple $(X,p,\ell')$ consisting of an Eckardt cubic threefold and a line $\ell'$ on $X$ through the Eckardt point $p$ and not meeting any lines on the cubic surface $S\subset X$,  such that $(C,\kappa_C,L)$ are associated to projection from $\ell'$.    In other words, the data of a triple $(X,p,\ell')$ is equivalent to the data $(C,\kappa_C,L)$.  More precisely, 
 a  triple $(X\subset \mathbb P^4,p,\ell')$ recording the embedding of $X$ in $\mathbb P^4$ determines 
a triple $$(C\subset \mathbb P^2,
\xymatrix@C=1em{
0\ar[r]& \mathcal O_{\mathbb P^2}(-2)\oplus \mathcal O_{\mathbb P^2}(-3) \ar[r]^<>(0.5)N & \mathcal O_{\mathbb P^2}(-1)\oplus \mathcal O_{\mathbb P^2}  \ar[r]& \kappa_C \ar[r] & 0 
}, L\subset \mathbb P^2)$$ recording the embedding of $C\cup L$ in $\mathbb P^2$, and a presentation of $\kappa_C$, and conversely.  
\end{theorem}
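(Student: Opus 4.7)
The plan is to produce explicit, mutually inverse maps between the two kinds of data. The forward direction, from $(X,p,\ell')$ to $(C,\kappa_C,L)$ together with a presentation of $\kappa_C$, has essentially been assembled in the discussion preceding the theorem: after normalizing coordinates so that $p=[0,0,0,0,1]$ and $\ell'=(x_0=x_1=x_2=0)$, the equation of $X$ takes the form \eqref{eqn:X'}, the conic-bundle matrix $M$ of \eqref{E:Msolid} becomes block-diagonal with $l$ in its middle entry, and its determinant factors as $l\cdot(kc-q^2)$, producing the decomposition $D=L\cup C$ with $L$ as in \eqref{E:Ldef} and $C$ as in \eqref{E:pq-EckCub}. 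The $2\times 2$ minor $N=\left(\begin{smallmatrix}k & q\\ q & c\end{smallmatrix}\right)$ of $M$ is precisely the matrix attached to the projection of the cubic surface $S=(f=x_4=0)$ from $p'=\ell'\cap S$ as in \eqref{E:CubSurfPoint}, and therefore by \eqref{E:KappaSurf} yields the required presentation of the odd theta characteristic $\kappa_C$ on $C$; \lemmaref{L:kappaCEck} shows that this $\kappa_C$ agrees with the theta characteristic coming from the double cover $\widetilde C\to C$.

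For the reverse direction, start with a smooth plane quartic $C\subset\mathbb P^2$ equipped with a presentation
$$
0\longrightarrow\mathcal O_{\mathbb P^2}(-2)\oplus\mathcal O_{\mathbb P^2}(-3)\xrightarrow{\ N\ }\mathcal O_{\mathbb P^2}(-1)\oplus\mathcal O_{\mathbb P^2}\longrightarrow\kappa_C\longrightarrow 0,
$$
where $N=\left(\begin{smallmatrix}k & q\\ q & c\end{smallmatrix}\right)$, together with a line $L\subset\mathbb P^2$ defined by a linear form $l$. Set $f:=kx_3^2+2qx_3+c$ and define
$$
X:=\bigl(f+l\cdot x_4^2=0\bigr)\subset\mathbb P^4,\qquad p:=[0,0,0,0,1],\qquad \ell':=(x_0=x_1=x_2=0).
$$
Direct substitution into the formulas recalled in the forward direction shows that projecting this $X$ from $\ell'$ recovers the input data $(C,N,L)$, so the two constructions are visibly inverse, and everything reduces to checking that $(X,p,\ell')$ is of the type required by the statement.

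This verification has three steps. First, by the cubic-surface correspondence (\eqref{E:KappaSurf}, cf.~\cite[Prop.~4.2]{Beauville_dethyper}), the smoothness of $C$ together with the odd theta characteristic $\kappa_C$ imply that $S=(f=0)\subset\mathbb P^3$ is a smooth cubic surface and that $p'=[0,0,0,1]$ does not lie on any line of $S$, exactly as in \S\ref{S:CubSurfPoint}. Second, the strict transform of $E=S\cap\Pi$ under $\operatorname{Bl}_{p'}S\to\mathbb P^2$ is the preimage of $L$, so $E$ is realized as a double cover of $L\cong\mathbb P^1$ branched along $L\cap C$; the transversality hypothesis on $L\cap C$ yields four distinct branch points, hence $E$ is a smooth elliptic curve and $\Pi$ meets $S$ transversely. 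Third, \cite[Lem.~1.3]{LPZ_eckardt} combines smoothness of $S$ with transversality of $\Pi\cap S$ to conclude that $X$ is smooth; \propositionref{prop:eckardt} then identifies $p$ as an Eckardt point with involution $\tau:x_4\mapsto -x_4$, and \propositionref{prop:projcone} (in the direction which uses smoothness of $C$) guarantees that $\ell'$ meets no line of $S$.

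The main obstacle I anticipate is step two: one has to upgrade a purely two-dimensional transversality hypothesis on $L\cap C$ to the three-dimensional transversality of $\Pi$ and $S$, the bridge being the explicit realization of $E$ as a double cover of the line $L$. Once this step is in hand, the remainder of the argument is bookkeeping; both constructions are manifestly inverse because they are encoded by the very same polynomials $k,q,c$ and the same linear form $l$.
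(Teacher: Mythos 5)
Your overall architecture matches the paper's: both directions are encoded by the same polynomials $k,q,c$ and the linear form $l$, the forward direction is exactly the discussion preceding the theorem, and the entire content of the converse is the smoothness of $X$, which by \cite[Lem.~1.3]{LPZ_eckardt} reduces to showing that $\Pi=(l=x_4=0)$ meets $S$ transversely. Where you genuinely diverge from the paper is in how you derive this transversality from that of $L$ and $C$: the paper introduces the curve $C'\subset S$ of points whose line to $p'$ meets $S$ tangentially (whose proper transform is $C$) and shows that a non-transverse intersection point of $S$ and $\Pi$ would force the tangent line of $C$ at the corresponding point to be contained in $L$; you instead use the double cover $\pi_{p'}:\operatorname{Bl}_{p'}S\to\mathbb P^2$ branched along $C$ and observe that the preimage of a line transverse to the branch curve is smooth. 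That is an attractive and more structural route.

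However, as written, your step two has a gap at the point $p'=\ell'\cap S$ itself, which is precisely the case the paper must treat separately (``Next let us rule out $\Pi$ meeting $S$ tangentially at $p'$\dots''). Your two key assertions --- that the strict transform of $E$ equals $\pi_{p'}^{-1}(L)$, and that smoothness of this preimage yields smoothness of $E$ and hence transversality of $\Pi$ and $S$ --- both fail if $E$ is singular at $p'$, i.e.\ if $\Pi=T_{p'}S$: in that case $E\to L$ has degree $3-\operatorname{mult}_{p'}E=1$, the divisor $\pi_{p'}^*L$ is the strict transform of $E$ \emph{plus} the exceptional divisor, and in any event a nodal cubic has smooth strict transform under blow-up at the node, so smoothness of the strict transform by itself says nothing about smoothness of $E$ at $p'$. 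The gap is easy to close: in your coordinates $T_{p'}S=(k=0)$ lies over the bitangent line $(k=0)$ of $C$, which is not transverse to $C$, so the hypothesis that $L$ meets $C$ transversely forces $\Pi\ne T_{p'}S$, hence $E$ is smooth at $p'$ and your identification of its strict transform with $\pi_{p'}^{-1}(L)$ is legitimate. (Alternatively: $\pi_{p'}^{-1}(L)$ is a smooth genus-one curve mapping finitely and birationally onto the plane cubic $E$, so $E$ has geometric genus one and is therefore smooth.) With one of these additions the argument is complete; the remaining steps (the forward direction, the smoothness of $S$ and the fact that $p'$ lies on no line of $S$, the appeal to \propositionref{prop:projcone}, and the mutual inversion of the two constructions) agree with the paper.
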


\begin{proof} Assume we are given $(C,\kappa_C,L)$ as in the theorem.  
Recall from \S \ref{S:CubSurfPoint} that the pair $(C,\kappa_C)$ determines a pair $(S,p')$ consisting of a smooth cubic surface $S\subset \mathbb P^2$, and a point $p'$ not lying on any lines in $S$.  Let $f(x_0,x_1,x_2,x_3)=0$ be the equation of $S$, and let $l(x_0,x_1,x_2)=0$ be the equation of $L$.  As explained in \S \ref{S:CubSurfPoint}, we way take coordinates on $\mathbb P^3$ such that $f=k(x_0,x_1,x_2)x_3^2 + 2q(x_0,x_1,x_2)x_3 + c(x_0,x_1,x_2)$.  Now consider the cubic hypersurface $X\subset \mathbb P^4$ defined by the equation \eqref{eqn:X'}; set $\ell'=(x_0=x_1=x_2=0)$ and  $p=[0,0,0,0,1]$.  Assuming $X$ is smooth,
 it is an Eckardt cubic threefold with Eckardt point $p$, since it admits the involution $x_4\to -x_4$ fixing the hyperplane section $S\subset X$ and the point $p\in X-S$ (\propositionref{prop:eckardt}).   Moreover, projection from $\ell'$ gives the triple $(C,\kappa_C,L)$.  

We now show that $X$ is smooth. This will follow from the fact that $L$ meets $C$ transversally.  Let $\Pi\subset \mathbb P^3$ be defined by $(l=0)$.  We have seen that to show $X$ is smooth it suffices to show that $S$ and $\Pi$ meet transversally (\S \ref{S:Eck=CSpair}). Note that $p'\in \Pi$.   To investigate the intersection $S\cap \Pi$, let $C'$ be the closure of the set of points $s$ in $S-p'$ such that the line $\overline{p's}$ meets $S$ tangentially.   For the fibration in quadrics $\operatorname{Bl}_{p'}S\to \mathbb P^2$, one can check that the proper transform of $C'$ is isomorphic to $C$ (this is a special case of a more general statement about smooth fibrations in quadrics where the corank of the fibers is at most $1$).  Now suppose that $\Pi$ does not meet $S$ transversally at some point $s\in S-p'$.
Then since $p'\in \Pi$, we would have that $s\in C'$, and it would follow that $\Pi$ was the tangent plane to $S$ at this point.  But then the Zariski tangent space to $C'\subset S$ at this point would be contained in $\Pi$, so that projecting from $p'$, we would have the tangent space to $C$ being  contained in $L$, which we have assumed is not true.  
Next let us rule out $\Pi$ meeting $S$ tangentially at $p'$; i.e., $\Pi=T_{p'}S$.  In this case, the intersection $E:=S\cap \Pi$ would be  a singular plane cubic; since we have assumed that $p'$ does not lie on a line of $S$, then $E$ would have to be an irreducible nodal plane cubic, with node at $p'$.  But then the two lines in $\Pi$ forming the tangent cone $C_{p'}E$ would show that $C'$ passed through $p'$.   Consequently, again, we would have that  the Zariski tangent space to $C'\subset S$ at $p'$  would be contained in $\Pi$, so that projecting from $p'$, we would have the tangent space to $C$ being  contained in $L$, which we have assumed is not true. 
 
Thus $S$ and $\Pi$ meet transversally, and so $X$ is smooth, completing the proof. 
\end{proof}

Let us now assume that $\ell' \subset X$ does not meet any lines on $S$, and let us describe the pseudo-double cover $\pi:\widetilde D\to D$ in more detail. First, \propositionref{prop:projcone} shows that $D=L\cup C$, and therefore since $\pi$ is a pseudo-double cover it follows that it can be described as
$\pi:\widetilde D=\widetilde L\cup \widetilde C \longrightarrow L\cup C=D$,
where $\widetilde L\to L$ is a connected double cover of the line $L$ branched at the four points $L\cap C$, while  $\widetilde C\to C$ is a connected double cover of the smooth plane quartic $C$ branched also at the four points $C\cap L$.  
Considering the equation \eqref{eqn:X'} for $X$, and the fact that $L$ is defined by $(l=0)$ \eqref{E:Ldef}, we see that the two lines in $X$ lying over a point in $L$ via the projection $\pi_{\ell'}$ are parameterized by the two points in $S$ lying over the point in $\ell$ via the projection $\pi_{p'}$.  In other words, all together we obtain the intersection $S\cap \Pi=E$; i.e., $\widetilde L\cong E$. Thus the pseudo-double cover can be described as:

\begin{equation}\label{E:CcupLcover}
\pi:\widetilde D=E\cup \widetilde C \longrightarrow L\cup C=D.
\end{equation}

We now give an explicit description of the Prym variety $P(E \cup \widetilde C, L \cup C)$ following \cite[\S 0.3]{Beauville_prymij}.
Let $\widetilde{\nu}:  E \amalg \widetilde{C}  \rightarrow E \cup \widetilde{C}$ and $\nu: L \amalg C \rightarrow L \cup C$ be the normalizations of $E\cup \widetilde{C} $ and $L\cup C$ respectively. Let $\pi': E \amalg \widetilde{C}   \rightarrow L \amalg C$ be the double covering map induced by $\pi$. Denote the ramification points of $\widetilde{C} \rightarrow C$ (respectively, $E \rightarrow L$) by $\widetilde{c}_1, \widetilde{c}_2, \widetilde{c}_3, \widetilde{c}_4 \in \widetilde{C}$ (respectively, $e_1, e_2, e_3, e_4 \in E$).  Note that $\widetilde{\nu}(\widetilde{c}_i) = \widetilde{\nu}(e_i)$ for $1 \leq i \leq 4$. Set $H'$ to be the subgroup of $\Pic(\widetilde{C} \amalg E)$ generated by $\calO(\widetilde{c}_i-e_i)$ ($1 \leq i \leq 4$). Let $H$ be the image of $H_0:=H' \cap J(\widetilde{C} \amalg E)$ in the quotient abelian variety $J( E\amalg \widetilde{C})/\pi'^*J(L \amalg C) \cong E \times (J(\widetilde{C})/\pi^*J(C))$. By \cite[Exer.~ 0.3.5]{Beauville_prymij}, 
the group $G$ consists of $2$-torsion elements and is isomorphic to $(\bZ/2\bZ)^2$. More explicitly, 
$$H =\{0, (\calO_E(e_2-e_1), \calO_{\widetilde{C}}(\widetilde{c}_2-\widetilde{c}_1)), (\calO_E(e_3-e_1), \calO_{\widetilde{C}}(\widetilde{c}_3-\widetilde{c}_1)), (\calO_E(e_4-e_1), \calO_{\widetilde{C}}(\widetilde{c}_4-\widetilde{c}_1))\}.
$$

Recall that $J(\widetilde{C})/\pi^*J(C)$ is the dual abelian variety to $P(\widetilde C,C)$ 
and therefore comes with a dual polarization (Remark \ref{R:dualPol}).  We equip $E$ with its canonical principal polarization.   Note also that as in \eqref{E:sigmaDef}, the involution $\tau$ on $X$ induces an involution $\sigma$ on $\widetilde D$. Now we have the following proposition whose proof we learned from Giulia Sacc\`a.

\begin{proposition} \label{prop:coneprym}
Let $(P(\widetilde D,D),\Xi)$ be the principally polarized Prym variety.  
There is an isogeny of polarized abelian varieties
$$
\phi:E \times (J(\widetilde{C})/\pi^*J(C))\longrightarrow P(E\cup \widetilde C,L\cup C)=P(\widetilde D,D)
$$
with kernel $H\cong(\mathbb Z/2\mathbb Z)^2$.  

Moreover, with respect to the action of $\sigma$ on $(P(\widetilde D,D),\Xi)$, the isogeny $\phi$ induces isomorphisms of polarized abelian varieties $P(\widetilde D,D)^\sigma\cong E$ and $P(\widetilde D,D)^{-\sigma}\cong J(\widetilde{C})/\pi^*J(C)$.  
\end{proposition}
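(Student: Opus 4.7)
The plan is to follow \cite[Exer.~0.3.5]{Beauville_prymij}, which gives precisely the recipe for the Prym of a reducible allowable double cover of this form. The normalization square
$$
\xymatrix{\widetilde{D}^{\nu} = E \amalg \widetilde{C} \ar[r]^-{\widetilde{\nu}} \ar[d]_-{\pi'} & \widetilde{D} \ar[d]^-{\pi} \\ D^{\nu} = L \amalg C \ar[r]^-{\nu} & D}
$$
yields exact sequences of generalized Jacobians
$$
0 \to \mathbb{G}_m^{3} \to J(\widetilde{D}) \to E \times J(\widetilde{C}) \to 0, \qquad 0 \to \mathbb{G}_m^{3} \to J(D) \to J(C) \to 0
$$
(the $\mathbb{G}_m^{3}$'s encode gluing at the four nodes, and $J(L)=0$). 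The composition $P(\widetilde{D},D) \hookrightarrow J(\widetilde{D}) \twoheadrightarrow E \times J(\widetilde{C})$ lands in the connected component $E \times P(\widetilde{C},C)$ of $\ker(\mathrm{Nm}^{\nu})$, yielding a natural isogeny of $5$-dimensional ppavs $P(\widetilde{D},D) \to E \times P(\widetilde{C},C)$. Dualizing (using that $P(\widetilde{D},D)$ is a ppav since $\widetilde{D}\to D$ is a pseudo-double cover, and that $P(\widetilde{C},C)^{\vee} \cong J(\widetilde{C})/\pi^{*}J(C)$), I obtain the desired isogeny $\phi$.

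I would next identify the kernel directly. For each $i$, the class $h_{i}=(\mathcal{O}_{E}(e_{i}-e_{1}),\mathcal{O}_{\widetilde{C}}(\widetilde{c}_{i}-\widetilde{c}_{1}))$ is $2$-torsion in $E \times (J(\widetilde{C})/\pi^{*}J(C))$, since $2(e_{i}-e_{1}) \sim \pi^{*}(l_{i}-l_{1})$ is trivial on $E$ (as $l_{i}-l_{1}$ is principal on $\mathbb{P}^{1}$) and $2(\widetilde{c}_{i}-\widetilde{c}_{1}) \sim \pi^{*}(c_{i}-c_{1}) \in \pi^{*}J(C)$. Moreover, with origin $e_{1}$ the ramification points $e_{2},e_{3},e_{4}$ are the three non-trivial $2$-torsion points of $E$, hence sum to zero; on the $\widetilde{C}$-side, the ramification divisor $\sum_{i}\widetilde{c}_{i}$ is linearly equivalent to $\pi^{*}\kappa_{C}$ (a standard fact for the double cover determined by $(\kappa_C, s)$, cf.\ \lemmaref{L:kappaCEck}), so $\sum_{i}\widetilde{c}_{i} - 4\widetilde{c}_{1} = \pi^{*}(\kappa_{C}-2c_{1}) \in \pi^{*}J(C)$. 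Together these give the single relation $h_{2}+h_{3}+h_{4}=0$, yielding $H \cong (\mathbb{Z}/2\mathbb{Z})^{2}$.

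The $\sigma$-equivariance is read off from the explicit fiber conic $ku^{2}+2\lambda qu+\lambda^{2}c+lv^{2}=0$. Over a point of $L=(l=0)$ the conic has no $v$-dependence, factoring as $(u-\alpha_{1}\lambda)(u-\alpha_{2}\lambda)=0$, so each of the two component lines is $\tau$-stable and $\sigma|_{E}=\mathrm{id}_{E}$. Over a point of $C\setminus L$, the node lies on $(v=0)$ and the two branches through it have slopes $\pm\sqrt{-l/k}$, swapped by $v\mapsto -v$, so $\sigma|_{\widetilde{C}}$ is the non-trivial involution of $\widetilde{C}\to C$. Consequently, the induced action on $E \times (J(\widetilde{C})/\pi^{*}J(C))$ is trivial on the first factor and $-1$ on the second (the latter since $\sigma$ acts by $-1$ on $P(\widetilde{C},C)$, hence on its dual). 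Since $\phi$ is $\sigma$-equivariant and $H$ meets neither $E\times 0$ nor $0\times J(\widetilde{C})/\pi^{*}J(C)$ non-trivially, the restrictions $E \to P(\widetilde{D},D)^{\sigma}$ and $J(\widetilde{C})/\pi^{*}J(C) \to P(\widetilde{D},D)^{-\sigma}$ are injective; comparing dimensions ($1$ and $4$, via \lemmaref{lem:dimpm} and $P(\widetilde{D},D)\cong JX$) forces them to be isomorphisms of abelian varieties, and the polarization compatibility is then a direct check via \lemmaref{L:Rod_avgroup} and \remref{R:dualPol}.

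The main obstacle I expect is the polarization bookkeeping: the restricted Prym polarization on $P(\widetilde{D},D)^{\sigma}$ is of type $(2)$ by \lemmaref{lem:dimpm}, while the canonical polarization on $E$ is of type $(1)$, so one must verify that the isomorphism coming from $\phi$ identifies the former with $2\Theta_{E}$. This scaling can be tracked through the dualization step, using that the natural map $P(\widetilde{D},D) \to E \times P(\widetilde{C},C)$ factors through the $2$-isogeny induced by the restriction of the Prym polarization.
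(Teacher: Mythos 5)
Your proposal is correct and follows essentially the same route as the paper's proof: Beauville's \S 0.3 description of the Prym of the reducible allowable cover supplies the isogeny and the kernel $H$ (your direct verification that $H\cong(\mathbb Z/2\mathbb Z)^2$ via the relation $h_2+h_3+h_4=0$ fills in a detail the paper simply cites), the action of $\sigma$ is determined component-by-component on $E\amalg\widetilde C$ (you via the explicit fiber conic, the paper synthetically via the fixed points of $\tau$ on $\ell'$), and the identification of the invariant and anti-invariant parts uses equivariance of $\phi$ together with the fact that $H$ meets neither factor nontrivially. Your closing caveat about polarizations is well taken: the polarization induced on $P(\widetilde D,D)^\sigma\cong E$ is of type $(2)$, i.e.\ $2\Theta_E$ rather than the principal polarization (consistent with \lemmaref{lem:dimpm}), a normalization that the paper's proof passes over.
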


\begin{proof}
The existence of the isogeny $\phi$ and the description of the kernel is  \cite[Prop.~ 0.3.3]{Beauville_prymij} 
We now explain the assertion regarding $P(\widetilde D,D)^\sigma$ and $P(\widetilde D,D)^{-\sigma}$.  
Let $\iota$ be the covering involution associated with the double cover $\pi: E \cup \widetilde{C} \rightarrow L \cup C$. 
We claim that the action of $\sigma$ on $E$ is trivial, while the action of $\sigma$ on $\widetilde C$ coincides with $\iota$.  
Recall that $E \cup \widetilde{C}$ parameterizes the residual lines to $\ell'$ in a degenerate fiber of the conic bundle $\pi_{\ell'}: \operatorname{Bl}_{\ell'}X \rightarrow \bP^2_{x_0,x_1,x_2}$.  
We describe the involution $\sigma$ on $E$ and on $\widetilde{C}$ respectively using Equation (\ref{eqn:X'}). 
To this end, let us consider a point $x$ on $\bP^2_{x_0,x_1,x_2} = (x_3=x_4=0)\subset \mathbb P^4$.  The plane it parameterizes is the span 
$\langle \ell', x \rangle$.  Since $\ell'$ and $x$ are preserved by $\tau$ ($x_4\mapsto -x_4$), then so is the span $\langle \ell', x \rangle$.  Assume now that $x$ is in $L$ or $C$, but not both.  Then the intersection of $\langle \ell', x \rangle$ with $X$ consists of three distinct lines, $\ell'\cup m\cup m'$, with $m$ and $m'$ corresponding to points of $E\cup \widetilde C$.  
Since $\langle \ell', x \rangle$ is preserved by $\tau$, 
it follows that either $\tau$ interchanges $m$ and $m'$, or it takes $m$ to $m$ and $m'$ to $m'$.  In other words, 
we see that $\sigma$ either acts as the identity on a point of $E\cup \widetilde C$, or by $\iota$.  
Now, since the only fixed points of the action of $\tau$ on $\ell'$ are the points $p=[0,0,0,0,1]$ and $p'=[0,0,0,1,0]$, the only way $m$ and $m'$ are not interchanged is if one of them passes through $p$ (which forces the other to pass through $p'$). It is easy to see from Equation \eqref{eqn:X'} $m$ or $m'$ passes through $p$ if and only if $l(x)=0$.  
Thus, the induced involution $\sigma$ on $\widetilde{C}$ (respectively, $E$) coincides with the covering involution $\iota$ (respectively, the identity map), as claimed. 

It follows that the isogeny $\phi$ is equivariant with respect to the involution $\jmath=(1,\iota)$ on  the product $E \times (J(\widetilde{C})/\pi^*J(C))$.  Since $\pi^*J(C)=\operatorname{Im}(1+\iota)$ on $J(\widetilde{C})$,
  it follows that $\operatorname{Im}(1+\jmath)=E\times\{0\}$.  
Similarly, $\operatorname{Im}(1-\jmath)=\{0\}\times J(\widetilde{C})/\pi^*J(C)$.  From a dimension count, we have then that $\phi$ induces isogenies $E\times \{0\}\to P(\widetilde D,D)^{\iota}$ and $\{0\}\times(J(\widetilde{C})/\pi^*J(C))\to P(\widetilde D,D)^{-\iota}$.  

Since $H \cap (E \times \{0\}) = H \cap (\{0\} \times (J(\widetilde{C})/\pi^*J(C))) = \{(0,0)\}$, these isogenies are isomorphisms. 
That these are isomorphisms of polarized abelian varieties follows from the fact that $\phi$ is a morphism of polarized abelian varieties, and 
the fact that we are taking the product polarization on $E\times (J(\widetilde{C})/\pi^*J(C))$.  In other words, if we take $\Xi$, pull it back to $E\times (J(\widetilde{C})/\pi^*J(C))$, and then restrict to a component, we get the same thing as if we take $\Xi$, restrict it to an invariant (respectively, anti-invariant) part, and then pull-back via this isomorphism.  
\end{proof}

We summarize the situation for Eckardt cubic threefolds in the following theorem.  

\begin{theorem}[Projecting from a line  $p\in \ell' \subset X$] \label{T:conedecomp}
Let $(X,p)$ be an Eckardt cubic threefold, and let $\ell'\subset X$ be a line passing through $p$ such that $\ell'$ does not meet any lines in the associated cubic surface $S$.  Let $E\subset S$ be the hyperplane section, and let $\pi:\widetilde C\to C$ be the restriction of the descriminant cover to the irreducible component $C$ that is a smooth plane quartic.  We identify $J(\widetilde{C})/\pi^*J(C)$ as the dual abelian variety to $P(\widetilde C,C)$, with the dual polarization.  There is an isogeny 
$$
\phi:E\times (J(\widetilde{C})/\pi^*J(C))\longrightarrow JX
$$
with $\ker \phi \cong (\mathbb Z/2\mathbb Z)^2$.  

Moreover, with respect to the action of $\tau$ on $(JX,\Theta_X)$, the isogeny $\phi$ induces  isomorphisms of polarized abelian varieties  $JX^\tau \cong E$ and $JX^{-\tau}\cong J(\widetilde{C})/\pi^*J(C)$.
\end{theorem}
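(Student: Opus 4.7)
The plan is to deduce the theorem as an immediate consequence of Proposition \ref{prop:coneprym} combined with Beauville's isomorphism \eqref{E:P=IJ}, namely $P(\widetilde D, D) \cong JX$ as principally polarized abelian varieties, applied to the cover \eqref{E:CcupLcover}. Composing the isogeny of Proposition \ref{prop:coneprym} with this isomorphism immediately produces an isogeny
$$
\phi \colon E \times (J(\widetilde C)/\pi^* J(C)) \longrightarrow P(\widetilde D, D) \xrightarrow{\sim} JX
$$
with kernel $H \cong (\mathbb Z/2\mathbb Z)^2$. The polarized isomorphism statements for $JX^\tau$ and $JX^{-\tau}$ will then follow directly from the second half of Proposition \ref{prop:coneprym}, \emph{provided} one verifies that the involution $\tau$ acting on $JX$ corresponds, under the identification \eqref{E:P=IJ}, to the involution induced by $\sigma$ on $P(\widetilde D, D)$.

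The essential content therefore lies in matching the two involutions. For this I would argue as follows. Since $\ell'$ passes through the Eckardt point $p$, Lemma \ref{lem:invline} shows that $\ell'$ is $\tau$-invariant, so $\tau$ lifts to an automorphism of $\operatorname{Bl}_{\ell'} X$. From the coordinate expression \eqref{eqn:X'}, projection from $\ell' = (x_0 = x_1 = x_2 = 0)$ to $\mathbb P^2_{x_0, x_1, x_2}$ is equivariant for the trivial $\tau$-action on the base, so $\tau$ preserves each fiber of $\pi_{\ell'}$, fixes $D$ pointwise, and induces a lift to $\widetilde D$. By the geometric description of $\sigma$ already worked out in the proof of Proposition \ref{prop:coneprym}, where $\sigma$ is defined as the action on the residual lines in the fiber of $\pi_{\ell'}$ above a point of $D$, this induced lift is exactly $\sigma$.

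Next I would invoke functoriality of Beauville's construction \eqref{E:P=IJ}, which is realized through the Abel--Jacobi map for $1$-cycles on $\operatorname{Bl}_{\ell'} X$ (or equivalently the Fano surface of lines), hence is manifestly equivariant for any automorphism of $X$ preserving $\ell'$. Consequently $\tau|_{JX}$ and the involution of $P(\widetilde D, D)$ induced by $\sigma$ correspond under \eqref{E:P=IJ}. Transferring the last part of Proposition \ref{prop:coneprym} through this correspondence then yields $JX^\tau \cong E$ and $JX^{-\tau} \cong J(\widetilde C)/\pi^* J(C)$ as polarized abelian varieties.

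The main obstacle is this last compatibility of involutions; it is conceptually transparent but requires one to be confident that Beauville's Prym--intermediate-Jacobian isomorphism is functorial in the appropriate sense. A safe alternative, should one prefer to avoid appealing to functoriality of the construction, is to track $\tau$ and $\sigma$ on an explicit set of generators of $H_3(X,\mathbb Z)$ and of $\ker(\operatorname{Nm}_\pi)$ (for instance via classes of $\tau$-invariant and $\tau$-anti-invariant lines in $\operatorname{Bl}_{\ell'} X$, using Lemma \ref{lem:invline}) and check the identification by hand. Either route completes the proof, since all of the substantive work has already been accomplished in Proposition \ref{prop:coneprym}.
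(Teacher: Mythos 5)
Your proposal is correct and follows essentially the same route as the paper, which likewise deduces the theorem immediately from Proposition \ref{prop:coneprym} together with the identification $P(\widetilde D,D)\cong JX$ and the matching of $\tau$ with $\sigma$. The only difference is that the paper simply asserts the compatibility of the two involutions, whereas you spell out a justification via equivariance of the projection and functoriality of Beauville's construction; that added detail is harmless and accurate.
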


\begin{proof}
Since $JX\cong P(\widetilde D,D)$, and the action of $\tau$ on $JX$ is identified with the action of $\sigma$ on $P(\widetilde D,D)$, 
this is an immediate consequence of \propositionref{prop:coneprym}. 
\end{proof}

\section{Recalling the fiber of the Prym map over the  cubic threefold locus} \label{S:DS}

In this section, we recall  Donagi--Smith's  result on the fiber of the Prym map over the intermediate Jacobian locus, as some details in various steps in that proof are central to our proof of \theoremref{thm:prymfiber} on the fiber of a related Prym map for Eckardt cubic threefolds.

Recall that an admissible double cover of curves is called an \emph{allowable} double cover if the associated Prym is compact (e.g., \cite[\S I.1.3]{DS_prym} and \cite[(**), p.173]{Beauville_schottky}).
The first result we will review is:

\begin{theorem}[{\cite[Thm.~V.1.1]{DS_prym}}] \label{thm:DSprymfiber}
Let $X$ be a smooth cubic threefold. The fiber of the Prym map for allowable double covers of genus $6$ curves
$$
\calP_{6}: \overline{\calR}_{6}^a \longrightarrow \calA_5
$$
over the intermediate Jacobian $JX\in \mathcal A_5$ is isomorphic to the Fano surface of lines $F(X)$:
$$
\mathcal P_6^{-1}(JX)\cong  F(X).
$$
\end{theorem}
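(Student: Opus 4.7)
The plan is to construct a morphism $\Phi: F(X) \to \overline{\calR}_6^a$ sending a line $\ell \subset X$ to the discriminant double cover $\widetilde D_\ell \to D_\ell$ associated with projection from $\ell$, to verify that $\Phi$ factors through the fiber $\calP_6^{-1}(JX)$ via Beauville's isomorphism \eqref{E:P=IJ}, and finally to show that $\Phi$ is an isomorphism onto this fiber by combining Beauville's reconstruction of cubic threefolds from discriminant data with the Clemens--Griffiths Torelli theorem for cubic threefolds.

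For the construction, I would follow \S\ref{S:CubSolid}: for each $\ell \in F(X)$, the conic bundle $\pi_\ell: \operatorname{Bl}_\ell X \to \mathbb{P}^2$ has a stable plane quintic discriminant $D_\ell$ equipped with its allowable pseudo-double cover $\widetilde D_\ell \to D_\ell$. Since this construction can be performed in families (over $F(X)$ using the universal line), it defines a morphism $\Phi: F(X) \to \overline{\calR}_6^a$, and \eqref{E:P=IJ} places its image inside $\calP_6^{-1}(JX)$.

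For surjectivity, take any $[\widetilde D \to D]$ in $\calP_6^{-1}(JX)$. By Beauville's reconstruction (\cite[Prop.~4.2]{CMF}, as discussed after \eqref{E:KappaSolid}), the pair $(D, \kappa_D)$ arises, up to projective equivalence, from a unique pair $(X', \ell')$ consisting of a smooth cubic threefold and a line on it. Then $JX' \cong P(\widetilde D, D) \cong JX$, so the Clemens--Griffiths Torelli theorem yields $X' \cong X$ and hence $[\widetilde D \to D] = \Phi(\ell)$ for some line $\ell \subset X$. For injectivity (on points modulo $\operatorname{Aut}(X)$), an iso $\Phi(\ell_1) \cong \Phi(\ell_2)$ gives, by the same reconstruction, a projective equivalence $(X,\ell_1) \simeq (X,\ell_2)$. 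To upgrade the resulting bijection on closed points to an isomorphism of schemes, I would use that $F(X)$ is a smooth irreducible projective surface, that $\Phi$ is proper, and that the fiber has the same dimension via a differential computation for the Prym map (as in \cite[\S I]{DS_prym}); Zariski's main theorem then finishes the argument.

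The main obstacle I expect is the boundary behavior in $\overline{\calR}_6^a$: lines $\ell$ that meet other lines of $X$ produce singular $D_\ell$ and non-\'etale covers $\widetilde D_\ell \to D_\ell$, so one must verify that $\Phi$ extends continuously to all of $F(X)$, that Beauville's reconstruction remains valid for these degenerate $(D,\kappa_D)$ (which is provided by the pseudo-double cover framework of \cite{Beauville_prymij}), and that no allowable cover with Prym isomorphic to $JX$ is missed in the fiber. This is the technical heart of the Donagi--Smith argument, and is precisely what forces one to work with the compactification $\overline{\calR}_6^a$ rather than only the locus of \'etale covers.
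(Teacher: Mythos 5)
Your construction of $\Phi: F(X)\to\overline{\calR}_6^a$ and the verification that its image lies in $\calP_6^{-1}(JX)$ via \eqref{E:P=IJ} are fine, but the surjectivity step contains a genuine gap that is in fact the entire difficulty of the theorem. You write: ``take any $[\widetilde D\to D]$ in $\calP_6^{-1}(JX)$; by Beauville's reconstruction the pair $(D,\kappa_D)$ arises from a unique pair $(X',\ell')$.'' Beauville's reconstruction applies only to pairs consisting of a \emph{stable plane quintic} and an \emph{odd} theta characteristic. An arbitrary point of $\calP_6^{-1}(JX)$ is just an allowable double cover of a stable genus~$6$ curve: there is no reason, a priori, that $D$ is a plane quintic, nor that the cover corresponds to an odd theta characteristic even when it is one (even covers of plane quintics exist and do not come from conic bundles). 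So there is no $(D,\kappa_D)$ to feed into the reconstruction, and the appeal to Clemens--Griffiths never gets off the ground. You flag ``no allowable cover \ldots is missed'' as a concern in your final paragraph, but the pseudo-double-cover framework of \cite{Beauville_prymij} only controls degenerations \emph{within} the plane-quintic locus; it says nothing about covers of curves that are not plane quintics at all. Ruling those out is precisely what the theorem asserts and what your argument assumes.

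The paper (following Donagi--Smith) closes this gap by a degree count along the positive-dimensional fiber rather than by pointwise reconstruction. One shows via the codifferential computation (\lemmaref{L:DSV15-explained}, \lemmaref{L:DSV15}) that the kernel of $d\calP_6$ at any cover coming from a line is exactly $2$-dimensional, so the plane-quintic locus $\overline{\calR}^a_{\mathcal Q}$ is a connected component of $\mathcal C\times_{\calA_5}\overline{\calR}_6^a$; then one computes that the induced map on exceptional divisors of the blow-ups has degree $27$, because the normal directions are identified with hyperplanes $H\supset\ell$ and a general hyperplane section $X\cap H$ contains $27$ lines. Since the global degree of $\calP_6$ is also $27$, \lemmaref{L:DS-fiber} forces $\overline{\calR}^a_{\mathcal Q}$ to be the \emph{whole} fiber product, i.e.\ there are no components of the fiber parametrizing covers of non-plane-quintic curves. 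Some global input of this kind (or an equivalent one, such as the tetragonal-construction analysis) is unavoidable; the local reconstruction argument you propose cannot supply it.
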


\begin{rem}
We state \theoremref{thm:DSprymfiber} for the moduli stacks.  For the coarse moduli spaces, we have $\mathcal P^{-1}_6\left(JX  \right)\cong F(X)/\operatorname{Aut}(X)$.  
\end{rem}

  Let us note here that from the proof of \theoremref{thm:DSprymfiber}, one obtains:

\begin{cor}[{\cite[Thm.~I.2.1]{DS_prym}}]\label{C:DSdegree}
The Prym map
$$
\calP_{6}: \overline{\calR}_{6}^a \longrightarrow \calA_5
$$
has degree $27$, corresponding to the $27$ lines contained in a general hyperplane section of a smooth cubic threefold $X$. 
 \end{cor}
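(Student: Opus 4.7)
The plan is to deduce the degree from the fiber description in Theorem~\ref{thm:DSprymfiber} using the Donagi--Smith technique of computing the degree of a generically finite map via the local degree along a positive-dimensional fiber. First I would observe that $\overline{\calR}_6^a$ and $\calA_5$ both have dimension $15$ and that $\calP_6$ is dominant with generically injective differential, so $\calP_6$ is generically finite of some degree $d$; the task is to compute $d=27$.

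Since the fiber $\calP_6^{-1}(JX)=F(X)$ is an irreducible surface, the degree cannot be read off naively. Instead one computes the local degree of $\calP_6$ along $F(X)$ at a generic point. Concretely, pick a generic $\ell\in F(X)$ corresponding to an allowable double cover $(\widetilde D,D)$, and identify a complement to $T_\ell F(X)$ inside $T_{(\widetilde D,D)}\overline{\calR}_6^a$; the differential $d\calP_6$ then induces a linear map from this complement to a complement of the image of $dT_\ell F(X)$ inside $T_{JX}\calA_5$. To translate the local degree into a count, take a generic smooth arc $\gamma(t)\subset \calA_5$ through $JX$ meeting the image of $\calP_6$ transversely and not tangent to any ramification locus. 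For $t\neq 0$ the fiber $\calP_6^{-1}(\gamma(t))$ consists of exactly $d$ reduced points, and these must limit into $F(X)$ as $t\to 0$; the local degree argument computes $d$ as the number of such limit branches concentrating near a fixed $\ell$.

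The heart of the argument—and the main obstacle—is to identify this number of local branches with a classical geometric count. Here the key observation is that a generic tangent direction in $\calA_5$ can be realized by varying the cubic threefold $X$ through a one-parameter family of hyperplanes, and for each $\ell\in F(X)$ one selects a general hyperplane $H\supset \ell$ so that $S=H\cap X$ is a smooth cubic surface containing $\ell$. A nearby allowable double cover with Prym close to $JX$ corresponds precisely to a choice of one of the $27$ lines on $S$; each such line $\ell'\subset S$ is itself a line on $X$ and produces a local branch of $\calP_6^{-1}(\gamma(t))$ converging to a point of $F(X)$. Performing this local analysis carefully—matching the infinitesimal deformation of the double cover with the infinitesimal deformation of its Prym, and verifying that each of the $27$ lines on $S$ contributes exactly one local branch—yields $d=27$. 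This infinitesimal identification is essentially the content of \cite[Thm.~I.2.1]{DS_prym}; once it is in place, the count of $27$ lines on a smooth cubic surface gives the corollary immediately.
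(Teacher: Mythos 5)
Your proposal is correct and follows essentially the same route as the paper: the degree is read off as the degree of the induced map of exceptional divisors along the positive-dimensional fiber $F(X)$ (this is \eqref{E:degComp}), with the normal directions identified via \lemmaref{L:DSV15} with hyperplanes $H\supset\ell$, so that a general normal direction singles out the $27$ lines of the smooth cubic surface $X\cap H$ (your intermediate phrase about branches ``concentrating near a fixed $\ell$'' should read: the $27$ branches converge to the $27$ distinct points of $F(X)$ given by the lines in $X\cap H$, one branch each, and the infinitesimal identification you invoke is \cite[Lem.~V.1.5]{DS_prym}, i.e.\ \lemmaref{L:DSV15}, rather than \cite[Thm.~I.2.1]{DS_prym}). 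The one caution, which the paper itself records in a remark, is that this is not an independent derivation of the degree: your step ``these must limit into $F(X)$'' relies on \theoremref{thm:DSprymfiber}, whose proof already uses $\deg\calP_6=27$ as an input, so the corollary is a geometric reinterpretation of the degree rather than a new computation of it.
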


\begin{rem}
The proof of \theoremref{thm:DSprymfiber} use the fact that the degree of the Prym map $\mathcal P_6$ is equal to $27$, so that  \corollaryref{C:DSdegree} does not provide a new derivation of the degree.  The point is that one recovers the degree of the Prym map in terms of the geometry of cubic threefolds. 
\end{rem}

\subsection{Strategy for computing the fiber}\label{S:DS-method}
We start by reviewing Donagi--Smith's strategy for computing the degree of a generically finite morphism by studying the differential along positive dimensional fibers.  We provide here a slightly different presentation, which is perhaps more streamlined for our purposes. 

Suppose that $f:Y'\to Y$ is a proper, surjective, generically finite morphism of  integral separated  schemes of finite type over a field.  For any closed subscheme $Z\subset Y$, we have the fibered product  diagram.
\begin{equation}\label{E:FultonSegre}
\xymatrix{
Z' \ar@{^(->}[r] \ar[d]^{f'}& Y' \ar[d]^f\\
Z \ar@{^(->}[r]& Y
}
\end{equation}
  Let $\widetilde Y\to Y$ (respectively, $\widetilde Y'\to Y'$) be the blow-up of $Y$ along $Z$ (respectively, $Y'$ along $Z'$), with exceptional divisor $\widetilde Z=\mathbb PC_ZY$ (respectively, $\widetilde Z'=\mathbb PC_{Z'}Y'$). Setting $\widetilde f:\widetilde Y'\to \widetilde Y$ be the induced morphism on blow-ups, then 
 the induced morphism of exceptional divisors $\widetilde f|_{\widetilde Z'}:\widetilde Z'\to \widetilde Z$ is generically finite, and
\begin{equation}\label{E:degComp}
\deg f=\deg (\widetilde f|_{\widetilde Z'}:\widetilde Z'\to \widetilde Z);
\end{equation}
i.e., the degree of the map $f$ is equal to the degree of the blow-up when restricted to the exceptional divisors.  
Indeed, since  $\deg f=\deg (\widetilde f:\widetilde Y'\to \widetilde Y)$, we can immediately reduce to the case where $Z$ and $Z'$ in \eqref{E:FultonSegre} are Cartier divisors, in which case we are trying to show $\deg f= \deg f'$.  Then
using \cite[Prop.~4.2(a), Cor.~4.2.1]{fulton}, we have 
 have $f'_*((1-[Z'])\cap [Z'])=f'_*s(Z',Y')= (\deg f) s(Z,Y)=  (\deg f) ((1-[Z])\cap [Z])$. 
 Looking at the component of $[Z]$, and the fact that by definition $f_*[Z']=(\deg f')[Z]$, we have $\deg f=\deg f'$.  

Conversely, once one knows the degree of a generically finite map, one can use this to study the fibers:

\begin{lem}[{Identifying fibers \cite[\S I.3]{DS_prym}}]\label{L:DS-fiber}
In the notation above, assume $Y$ is geometrically unibranch (see  \cite[\href{https://stacks.math.columbia.edu/tag/0BQ2}{Tag 0BQ2}, \href{https://stacks.math.columbia.edu/tag/0BPZ}{Tag 0BPZ}]{stacks-project}; e.g., $Y$ is normal \cite[\href{https://stacks.math.columbia.edu/tag/0BQ3}{Tag 0BQ3}]{stacks-project}), $Z$ is integral, and $Z'_0\subset Z'$ is a \emph{connected component} of $Z'$ dominating $Z$, with $Z_0'$ integral, and with the generic fiber of $Z'_0\to Z$ connected. Let $\widetilde Z'_0$ be the exceptional divisor in the blow-up of $Y'$ along $Z_0'$, and let $\widetilde f_0:\widetilde Z_0'\dashrightarrow   \widetilde Z'\to \widetilde Z$ be the associated rational map. If $\deg \widetilde f_0\ge \deg f$, then  $\deg \widetilde f_0=\deg f$, and  $Z'_0=Z'$.  
\end{lem}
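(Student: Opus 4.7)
The plan is as follows. Since $Z'_0$ is a connected component of $Z'$, one may split $Z' = Z'_0 \sqcup Z'_1$ as a disjoint union, with $Z'_1$ the (possibly empty) union of the remaining connected components. Because the blow-up construction is local on $Y'$, this induces a corresponding splitting $\widetilde Z' = \widetilde Z'_0 \sqcup \widetilde Z'_1$, in which $\widetilde Z'_0 = \mathbb{P}C_{Z'_0}(Y')$ coincides with the exceptional divisor in $\mathrm{Bl}_{Z'_0}(Y')$, and the restriction of $\widetilde f|_{\widetilde Z'}$ to $\widetilde Z'_0$ is the morphism $\widetilde f_0$.

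Next, rephrase \eqref{E:degComp} at the level of cycles. Because $Y$ is geometrically unibranch and $Z$ is integral, $\widetilde Z = \mathbb{P}C_Z Y$ is integral of the same dimension as $\widetilde Z'$, and one obtains
\[
\widetilde f_* [\widetilde Z'_0] \;+\; \widetilde f_* [\widetilde Z'_1] \;=\; (\deg f)\,[\widetilde Z]
\]
in the Chow group. The coefficient of $[\widetilde Z]$ in each pushforward is a sum of nonnegative integers, one for each irreducible component of the source, being the length of that component's generic fiber over $\widetilde Z$ (or zero when the component does not dominate $\widetilde Z$). The integrality of $Z'_0$ together with the connectedness of the generic fiber of $Z'_0 \to Z$ are exactly what is needed to ensure that $\widetilde f_0$ is dominant and that the $[\widetilde Z]$-coefficient of $\widetilde f_*[\widetilde Z'_0]$ is precisely $\deg \widetilde f_0$. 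We thus deduce
\[
\deg \widetilde f_0 \;\leq\; \deg f,
\]
so the hypothesis $\deg \widetilde f_0 \geq \deg f$ forces equality and also forces the $[\widetilde Z]$-coefficient of $\widetilde f_* [\widetilde Z'_1]$ to vanish.

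The main obstacle is the final step: deducing $Z'_1 = \emptyset$ (equivalently, $Z'_0 = Z'$) from this vanishing. A priori, $Z'_1$ could contain components that fail to dominate $Z$ and thereby contribute nothing to the relevant coefficient; the geometrically unibranch hypothesis on $Y$ is precisely what is needed to rule this out, for it guarantees that for the proper generically finite surjection $f: Y' \to Y$ with $Y'$ integral, every irreducible component of the preimage $f^{-1}(Z)$ of an integral subvariety $Z$ dominates $Z$. Any hypothetical component of $Z'_1$ would therefore produce an irreducible component of $\widetilde Z'_1$ dominating $\widetilde Z$ and contributing a strictly positive amount to the coefficient just shown to vanish, a contradiction. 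Hence $Z'_1 = \emptyset$, which yields $Z'_0 = Z'$ and completes the proof.
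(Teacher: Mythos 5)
Your first two steps --- splitting off $\widetilde Z'_0$ as the exceptional divisor over the connected component $Z'_0$, and extracting $\deg\widetilde f_0\le\deg f$ from a cycle-level version of \eqref{E:degComp} so that the hypothesis forces equality and forces the $[\widetilde Z]$-coefficient of $\widetilde f_*[\widetilde Z'_1]$ to vanish --- are sound and essentially reproduce the paper's argument. The gap is in your final step. The assertion that geometric unibranchness of $Y$ guarantees that every irreducible component of $f^{-1}(Z)$ dominates $Z$ is false: take $f\colon Y'\to Y$ to be the blow-up of a smooth (hence normal, hence geometrically unibranch) surface at a point, and $Z$ an integral curve through that point; then the exceptional $\mathbb P^1$ is an irreducible component of $f^{-1}(Z)$ lying over a single point. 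This is exactly the configuration in the paper's own remark following the lemma. Consequently your contradiction does not materialize: a hypothetical connected component of $Z'_1$ could lie entirely over a proper closed subset of $Z$, contribute nothing to the $[\widetilde Z]$-coefficient you showed vanishes, and evade your argument entirely.

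Note also that you never use the hypothesis that the generic fiber of $Z'_0\to Z$ is connected where it is actually needed; it plays no essential role in identifying the $[\widetilde Z]$-coefficient of $\widetilde f_*[\widetilde Z'_0]$ with $\deg\widetilde f_0$ (the degree of a dominant generically finite map of integral schemes is well defined without it), and if your argument were correct the lemma would hold without that hypothesis. The paper's route for the last step is: once no component of $Z'$ other than $Z'_0$ dominates $Z$, the generic fiber of the proper surjection $Z'\to Z$ coincides with that of $Z'_0\to Z$ and is therefore connected; by \lemmaref{L:CC-fibers} (lower semi-continuity of the number of connected components of fibers --- this is where geometric unibranchness genuinely enters) every fiber of $Z'\to Z$ is then connected, and since $Z'_0\to Z$ is surjective, a connected component of $Z'$ sitting over a proper closed subset $T\subsetneq Z$ would disconnect the fiber over any point of $T$, a contradiction. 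You need to replace your final paragraph with an argument of this type.
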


\begin{proof}
Clearly $\deg \widetilde f_0\le \deg (\widetilde f|_{\widetilde Z'}:\widetilde Z'\to \widetilde Z)=\deg \widetilde f=\deg f$ \eqref{E:degComp}, giving the stated equality of degrees.  On the other hand, once we have $\deg \widetilde f_0=\deg f$, we see that $Z'$ cannot have any other irreducible components dominating $Z$ (otherwise they would increase the degree count of $\widetilde f:\widetilde Z'\to \widetilde Z$).     The assumption that the generic fiber of $Z'_0\to Z$ is connected, together with the basic \lemmaref{L:CC-fibers} below,  rules out components of $Z'$ that do not dominate $Z$.
\end{proof}

\begin{rem}
The hypothesis that  $Z_0'$ be a connected component of $Z$ is necessary (i.e., it is not enough to assume only that $Z_0'$ be a reduced irreducible component of $Z'$).  Indeed, let $f:Y'\to Y$ be the blow-up of a smooth surface at a point, and let $Z\subset Y$ be a smooth irreducible curve through the point.  Taking $Z_0'$ to be the strict transform of $Z$, we have that $\deg (\widetilde f_0:Z_0'\to Z)=1\ge \deg f$; however, $Z_0'\ne Z'$.    The hypothesis that $Y$ be geometrically unibranch is also necessary.  Indeed, let $f:Y'\to Y$ be the normalization of an integral nodal curve, let $Z\subset Y$ be a node, and let $Z_0'$ be one of the points in $Y'$ lying over the node $Z$.  Then $\deg \widetilde f_0=1\ge \deg f$, but $Z_0'\ne Z'$. 
\end{rem}

\begin{rem}
In this paper we will typically be applying \lemmaref{L:DS-fiber} to the quasi-projective coarse moduli spaces of certain smooth Deligne--Mumford (DM) stacks.
The key point is that the coarse moduli space of a smooth DM stack, locally being the quotient of a smooth space by a finite group, is normal. 
\end{rem}
 
We now recall a variation on a standard result  \cite[Thm.~4.17(ii)]{DM69} (\cite[Lem.~0BUI]{stacks-project})  on connected components of fibers of maps:

\begin{lem}\label{L:CC-fibers}
Let $f:X\to S$ be a proper surjective morphism of finite presentation between integral schemes, with $S$ geometrically unibranch. Let $n_{X/S}$ be the function on $S$ counting the numbers of geometric connected components of fibers of $f$ (e.g., \cite[Lem.~055F]{stacks-project}). Then $n_{X/S}$ is lower semi-continuous.
\end{lem}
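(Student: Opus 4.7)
The plan is to reduce to a statement about finite morphisms via Stein factorization, and then exploit the geometrically unibranch hypothesis through strict henselization. First, since $f$ is proper and $X$ is integral, the Stein factorization $f = g\circ h$ with $S' = \underline{\mathrm{Spec}}_S(f_*\mathcal{O}_X)$ produces an integral scheme $S'$ (for each affine $U\subset S$, $\Gamma(f^{-1}U,\mathcal{O}_X)$ sits inside $K(X)$ and so is a domain), a finite surjective morphism $g:S'\to S$, and a proper morphism $h:X\to S'$ with $h_*\mathcal{O}_X=\mathcal{O}_{S'}$; the latter property forces the fibers of $h$ to be geometrically connected by the standard Stein factorization theorem. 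Therefore $n_{X/S}(s)$ equals the number of geometric points in $g^{-1}(s)$, and the lemma reduces to lower semi-continuity of this counting function for $g$.

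The counting function is constructible (as is standard for finite morphisms), so it suffices to verify $n(s)\le n(t)$ for every specialization $t\leadsto s$ in $S$. Fix $s$ and pass to the strict henselization $A^{sh}$ of $A:=\mathcal{O}_{S,s}$; the geometrically unibranch hypothesis is precisely the statement that $A^{sh}$ is a domain. Let $B_0$ be the semi-local ring of $S'$ at $g^{-1}(s)$ (a domain since $S'$ is integral) and set $B:=A^{sh}\otimes_A B_0$. Henselian-ness of $A^{sh}$ then decomposes $B$ as a product of local henselian rings $B=\prod_{j=1}^m B_j$, with $m=n(s)$: each $B_j$ has residue field equal to the separably closed residue field of $A^{sh}$ and so contributes exactly one geometric point to the special fiber.

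The crux is to show that each factor $B_j$ is torsion-free over $A^{sh}$, so that each contributes at least one geometric point to every fiber over $\mathrm{Spec}(A^{sh})$. This follows from two flatness observations: $K(S')$ is $A$-flat (being a localization of $B_0$), and $A^{sh}$ is $A$-flat; consequently $B=A^{sh}\otimes_A B_0$ embeds into the $A^{sh}$-flat module $A^{sh}\otimes_A K(S')$, whence $B$ and each direct summand $B_j$ are $A^{sh}$-torsion-free. Choosing a prime $\mathfrak{q}\subset A^{sh}$ lying over the prime in $A$ corresponding to $t$ and base-changing to $\overline{\kappa(t)}=\overline{\kappa(\mathfrak{q})}$, each $B_j$ then contributes at least one geometric point to $g^{-1}(t)$, yielding $n(t)\ge m=n(s)$. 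The main obstacle is precisely this torsion-freeness step: without the unibranch hypothesis $A^{sh}$ is reducible, each $B_j$ need only dominate one irreducible component of $\mathrm{Spec}(A^{sh})$, and specializations can fail to preserve the geometric count---the behavior of the normalization map of a nodal curve is the prototypical counterexample.
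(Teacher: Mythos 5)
Your proof is correct and is exactly the argument the paper intends: the paper's own ``proof'' simply defers to the analogous Stacks Project lemma, and your route---Stein factorization to reduce to the finite morphism $g:S'\to S$ with $S'$ integral, then strict henselization at $s$, where the geometrically unibranch hypothesis makes $A^{sh}$ a domain so that torsion-freeness of each local factor $B_j$ forces $\operatorname{Spec}B_j\to\operatorname{Spec}A^{sh}$ to be surjective---is the standard adaptation, with the nodal-curve normalization correctly identified as the obstruction when unibranchness fails. One small repair at the crux step: the parenthetical reason that $K(S')$ is $A$-flat (``being a localization of $B_0$'') is not right as stated, since a localization of $B_0$ is only $B_0$-flat a priori; the correct reason is that $K(S')=B_0\otimes_A\operatorname{Frac}(A)$ is a $\operatorname{Frac}(A)$-vector space and $\operatorname{Frac}(A)$ is $A$-flat, after which your embedding $B\hookrightarrow A^{sh}\otimes_A K(S')$ and the torsion-freeness conclusion go through verbatim.
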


\begin{proof}
The proof is essentially identical to that of the standard result  \cite[Lem.~0BUI]{stacks-project}. For brevity, the proof is left to the reader.
\end{proof}

\subsection{The Donagi--Smith argument for cubic threefolds}

Suppose now we have a cubic threefold $X\subset \mathbb P^4$ together with a  line $\ell\subset X$ giving rise to the odd connected \'etale double cover $\pi:\widetilde D\to D$ of the smooth  plane quintic discriminant $D\subset \mathbb P^2$.  As in the discussion above, assume that $X=(F=0)$ and $D=(Q=0)$.

Let $\overline{\mathcal R}^a_{\mathcal Q}$ denote the moduli space of allowable odd double covers of plane quintics, and let $\mathcal C$ denote the moduli space of cubic threefolds.
From the commutative diagram (which at this point we have not proven is cartesian)
$$
\xymatrix{
\overline{\mathcal R}^a_{\mathcal Q}\ar[r] \ar[d]& \overline{\mathcal R}^a_6 \ar[d]\\
\mathcal C\ar[r]& \mathcal A_5
}
$$
we obtain a commutative diagram of codifferentials:
\begin{equation}\label{E:CubicCoDiffDi}
\xymatrix@C=1.5em{
T^\vee_\pi \overline{\mathcal R}^a_{\mathcal Q}& T_\pi^\vee \overline{\mathcal R}^a_6 \ar@{->>}[l]& (T_\pi \overline{\mathcal R}^a_6/T_\pi \overline{\mathcal R}^a_{\mathcal Q})^\vee \ar@{_(->}[l] & R^4_Q & H^0(D,\mathcal O_D(4)) \ar@{->>}[l]& J^4_Q=\mathbb C^3 \ar@{_(->}[l]\\
T^\vee_X\mathcal C \ar[u]& T_{JX}^\vee \mathcal A_5 \ar@{->>}[l] \ar[u]& (T_{JX}\mathcal A_5/T_X\mathcal C)^\vee \ar@{_(->}[l] \ar[u]& R_F^2  \ar[u]& H^0(\mathbb P^4,\mathcal O_{\mathbb P^4}(2)) \ar@{->>}[l]  \ar[u]& J^2_F=\mathbb C^5  \ar@{_(->}[l] \ar[u]
}
\end{equation}
Here $R_F=\mathbb C[x_0,\dots,x_4]/(\frac{\partial F}{\partial x_0},\dots, \frac{\partial F}{\partial x_4})$ (respectively, $J_F=(\frac{\partial F}{\partial x_0},\dots, \frac{\partial F}{\partial x_4})$) is defined to be the Jacobian ring (respectively, Jacobian ideal), with the superscript denoting the vector space of forms of a given degree; we define $R_Q$ and $J_Q$ similarly.

In fact, we will need to consider the case where $\pi:\widetilde D\to D$ is a pseudo-double cover of a nodal plane quintic.  We would like to have a diagram as above, but this requires a bit more work.  
Recall that for a nodal curve $D$ there is a natural map 
$ j:\Omega_D\to \omega_D$
  with torsion kernel and cokernel (e.g., \cite[\S IV.2.3.3]{DS_prym}).  This induces a natural map $j:\Omega_D\otimes \omega_D\to \omega_D^{\otimes 2}$.

\begin{lem}\label{L:DSV15-explained}
Suppose $X=(F=0)\subset \mathbb P^4$ is a smooth cubic threefold, $\ell\subset X$ is a line, and   $\pi:\widetilde D\to D$ is the associated odd pseudo-double cover of the discriminant  plane quintic $D=(Q=0)\subset \mathbb P^2$. 
There is a commutative diagram of codifferentials:
\begin{equation}\label{E:CubicCoDiffDiAll}
\xymatrix@C=1.5em{
&&&R^4_Q&H^0(D,\mathcal O_D(4)) \ar@{->>}[l]& J^4_Q=\mathbb C^3 \ar@{_(->}[l]\\
T^\vee_\pi \overline{\mathcal R}^a_{\mathcal Q}& T_\pi^\vee \overline{\mathcal R}^a_6 \ar@{->>}[l]& (T_\pi \overline{\mathcal R}^a_6/T_\pi \overline{\mathcal R}^a_{\mathcal Q})^\vee \ar@{_(->}[l] & (R_Q^5)^\vee \ar[u]& H^0(D,\Omega_D\otimes \mathcal O_D(2)) \ar@{->>}[l] \ar[u]^{j_*}& K=\mathbb C^3 \ar@{_(->}[l] \ar[u]\\
T^\vee_X\mathcal C \ar[u]& T_{JX}^\vee \mathcal A_5 \ar@{->>}[l] \ar[u]& (T_{JX}\mathcal A_5/T_X\mathcal C)^\vee \ar@{_(->}[l] \ar[u]& R_F^2  \ar[u]& H^0(\mathbb P^4,\mathcal O_{\mathbb P^4}(2)) \ar@{->>}[l]  \ar[u]& J^2_F=\mathbb C^5  \ar@{_(->}[l] \ar[u]
}
\end{equation}
where $K$ is defined as the kernel of the map $ H^0(D,\Omega_D\otimes \mathcal O_D(2))\to (R^5_Q)^\vee$, and the morphism $K\to J^4_Q$ is an isomorphism.
\end{lem}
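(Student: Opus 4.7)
The plan is to assemble the diagram \eqref{E:CubicCoDiffDiAll} row by row from established codifferential descriptions, then construct the vertical maps from the geometry of projection from a line, and finally verify the identification $K \cong J_Q^4$. I will treat the smooth $D$ case first and then extend to the nodal (pseudo-double cover) case using the natural morphism $j:\Omega_D\to\omega_D$.

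First, the bottom row of \eqref{E:CubicCoDiffDiAll} is the classical Griffiths residue description of the codifferential of the period map $\mathcal{C}\to\mathcal{A}_5$. Deformations of the cubic threefold $X=(F=0)$ modulo $\mathrm{PGL}_5$ are identified with $R_F^3$, and Macaulay duality together with the pairing $H^{2,1}(X)\otimes H^{1,2}(X)\to\mathbb{C}$ identifies $T^\vee_X\mathcal{C}$ as a quotient of $T^\vee_{JX}\mathcal{A}_5=\mathrm{Sym}^2 H^0(X,\Omega^2_X)$, which in turn is presented via $H^0(\mathbb{P}^4,\mathcal{O}(2))\twoheadrightarrow R_F^2$ with kernel $J_F^2$; for this one can refer to \cite{CMSP}. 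The top row is the exactly analogous description for the family of plane quintics: $R_Q^4$ plays the role of the codifferential to the moduli of plane quintic curves at $D=(Q=0)$, and the presentation $H^0(\mathbb{P}^2,\mathcal{O}(4))\twoheadrightarrow R_Q^4$ has kernel $J_Q^4$, which is $3$-dimensional since the three partial derivatives $\partial Q/\partial x_i$ are linearly independent of degree $4$.

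Next, for the middle row, I would invoke the codifferential description of the Prym map for pseudo-double covers of plane quintics. For smooth $D$, the Prym codifferential identifies $T^\vee_\pi\overline{\mathcal{R}}^a_6$ with $H^0(D,\omega_D\otimes\mathcal{O}_D(2))=H^0(D,\mathcal{O}_D(4))$ via the standard Prym symmetric square description together with the fact that $\omega_D\cong\mathcal{O}_D(2)$ for a plane quintic; the sub-cotangent space $T^\vee_\pi\overline{\mathcal{R}}^a_{\mathcal{Q}}$ corresponding to the plane-quintic stratum is then $(R_Q^5)^\vee$, coming from Beauville's matrix presentation \eqref{E:KappaSolid}: varying the matrix $M$ gives variations of the pair $(D,\kappa_D)$ within plane quintics. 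For nodal $D$ the natural map $j:\Omega_D\to\omega_D$ (with torsion kernel and cokernel supported at nodes) replaces $\omega_D$ by $\Omega_D$ on the Prym side, and the middle row becomes $(R_Q^5)^\vee\twoheadleftarrow H^0(D,\Omega_D\otimes\mathcal{O}_D(2))$, with $K$ its kernel. The vertical maps between the rows are induced by the projection $\pi_\ell:\mathrm{Bl}_\ell X\to\mathbb{P}^2$: a degree-$2$ form on $\mathbb{P}^4$ deforming $F$ induces, via the matrix $M$ in \eqref{E:Msolid}, a deformation of the entries of $M$ and hence a deformation of $Q=\det M$ by a degree-$4$ form on $\mathbb{P}^2$; commutativity of all squares then follows by unwinding the constructions in coordinates and applying the chain rule $\partial Q/\partial x_i=\mathrm{tr}(\mathrm{adj}(M)\cdot\partial M/\partial x_i)$.

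The main content is the last step: proving $K\xrightarrow{\sim} J_Q^4$. I would do this by a two-sided comparison. On the one hand, the composition $J_Q^4\hookrightarrow H^0(\mathbb{P}^2,\mathcal{O}(4))\to H^0(D,\mathcal{O}_D(4))$ lifts canonically to $H^0(D,\Omega_D\otimes\mathcal{O}_D(2))$: working locally and using that each $\partial Q/\partial x_i$ vanishes to appropriate order on the nodes of $D$ (because $Q=\det M$ is singular exactly at the nodes in a controlled way), the lift lands in $\ker j_*$ only modulo nodes, and in fact in $K$ since $J_Q^4$ corresponds to trivial deformations of $Q$ and hence pairs trivially with $T_\pi\overline{\mathcal{R}}^a_{\mathcal{Q}}$. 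This gives an injection $J_Q^4\hookrightarrow K$. On the other hand, a dimension count using $\dim K\le 3$ (by comparing dimensions of $T^\vee\overline{\mathcal{R}}^a_6$, $T^\vee\overline{\mathcal{R}}^a_{\mathcal{Q}}$ and $H^0(D,\Omega_D\otimes\mathcal{O}_D(2))$ and using Riemann--Roch on $D$, together with the dimension contribution of nodal deformations captured by the cokernel of $j$) forces $\dim K=3=\dim J_Q^4$, yielding the isomorphism.

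The step I expect to be the main obstacle is the rigorous identification $K\cong J_Q^4$ in the nodal case. The subtlety is that $j:\Omega_D\to\omega_D$ is not an isomorphism precisely at the nodes of $D$, so the lift of $\partial Q/\partial x_i$ from $\mathcal{O}_D(4)$ to $\Omega_D\otimes\mathcal{O}_D(2)$ requires a local calculation at each node showing that the section lifts canonically, and one must confirm that this lift is compatible with the Prym codifferential to the exclusion of spurious contributions from the node-supported torsion. Everything else is essentially a matter of chasing the diagram and invoking the Griffiths/Beauville codifferential descriptions in the form already developed in \cite{DS_prym,Beauville_prymij}.
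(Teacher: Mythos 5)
Your outline follows the same architecture as the source the paper relies on for this lemma---the paper's entire proof is the pointer ``We direct the reader to \cite[\S II.5.1]{DS_prym}''---namely Griffiths residues for the bottom row, the Kodaira--Spencer/Prym description for the middle row with $j:\Omega_D\to\omega_D$ mediating the nodal case, and a conormal-space identification on the right. So the route is the right one, but two of your justifications do not carry the weight you place on them. First, commutativity: the square of abstract cotangent spaces (the left third of the diagram) commutes formally, being the codifferential of a commuting square of moduli maps via Beauville's isomorphism $P(\widetilde D,D)\cong JX$ in families. What is \emph{not} formal is the identification of the vertical arrow $H^0(\mathbb P^4,\mathcal O_{\mathbb P^4}(2))\to H^0(D,\Omega_D\otimes\mathcal O_D(2))$ with restriction of quadrics to the Prym-canonical model of $D$, which requires identifying $H^0(D,\omega_D\otimes\eta_D)$ with $(R^1_F)^\vee$ compatibly with the two multiplication maps. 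Jacobi's formula $\partial(\det M)/\partial x_i=\operatorname{tr}(\operatorname{adj}(M)\,\partial M/\partial x_i)$ computes the differential of $(X,\ell)\mapsto D$, i.e.\ the map $R^3_F\to R^5_Q$ dual to the arrow $R^2_F\to (R^5_Q)^\vee$; it says nothing about the square comparing the two period-map codifferentials, which is exactly where the content of \cite[\S II.5.1]{DS_prym} lies. As written, your argument for the key commutativity addresses the wrong square.

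Second, the identification $K\cong J^4_Q$. Your direction (lifting $J^4_Q$ along $j_*$) runs into the fact that $j_*$ on global sections has kernel $H^0$ of the torsion subsheaf of $\Omega_D\otimes\mathcal O_D(2)$, one dimension per node, so the lift is not canonical; worse, the ambiguity is by classes that do \emph{not} lie in $K$, since a torsion section supported at a node pairs nontrivially with the corresponding node-smoothing direction in $R^5_Q$. You flag this as the main obstacle but do not resolve it. The cleaner argument runs in the opposite direction: commutativity of the square over the map $(R^5_Q)^\vee\to R^4_Q$ of Remark \ref{R:loc_triv_quintic} already forces $j_*(K)\subseteq J^4_Q$; the map $K\to J^4_Q$ is injective precisely because $K\cap H^0(\mathrm{torsion})=0$, by the pairing just described; and then $\dim K=15-12=3=\dim J^4_Q$ closes the argument. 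Note that this dimension count (and your claim $\dim K\le 3$) requires surjectivity of $H^0(D,\Omega_D\otimes\mathcal O_D(2))\to(R^5_Q)^\vee$, equivalently injectivity of the differential of $\overline{\mathcal R}^a_{\mathcal Q}\to\overline{\mathcal R}^a_6$ (a plane quintic carries a unique $g^2_5$), a point your count also needs but does not state.
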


\begin{proof}
We direct the reader to \cite[\S II.5.1]{DS_prym}.
\end{proof}

For convenience, we now recall \cite[Lem.~V.1.5, p.88]{DS_prym} describing the map $J^2_F\to J^4_Q$ in  \eqref{E:CubicCoDiffDi} and \eqref{E:CubicCoDiffDiAll}.

\begin{lem}[{\cite[Lem.~V.1.5, p.88]{DS_prym}}] \label{L:DSV15}
Let $X\subset \mathbb P\mathbb C^5 \cong \mathbb P^4$ be a smooth cubic threefold given by $F=0$, and let $\ell\subset X$ be a line giving rise to 
the odd pseudo-double cover $\pi:\widetilde D\to D$ of the discriminant  plane quintic $D\subset \mathbb P\mathbb C^3  \cong \mathbb P^2$ given by $Q=0$.   Take coordinates on $\mathbb C^5$ such that $\ell=(x_0=x_1=x_2=0)$, so that projection from $\ell$ gives the  map  $\pi_\ell:\mathbb P\mathbb C^5\dashrightarrow  \mathbb P\mathbb C^3$, $[x_0,x_1,x_2,x_3,x_4]\mapsto [x_0,x_1,x_2]$.  
 Under the identifications
$$
\xymatrix@R=.1em{
\mathbb C^3 \ar[r]^\sim& J^4_Q& \mathbb C^5\ar[r]^\sim& J^2_F\\
(a_0,a_1,a_2)\ar@{|->}[r]& \sum_{i=0}^2 a_i\frac{\partial Q}{\partial x_i}& (a_0,a_1,a_2,a_3,a_4)\ar@{|->}[r] &\sum_{i=0}^4 a_i\frac{\partial F}{\partial x_i}
} 
$$
the projectivization of the morphism $J^2_F\to J^4_Q$ of \eqref{E:CubicCoDiffDiAll} is the rational map $\pi_\ell$.   The projectivization of the dual map $(J^4_Q)^\vee\to (J^2_F)^\vee$ is therefore the inclusion $(\mathbb P^2_\ell)^\vee \subset (\mathbb P^4)^\vee$, where we are denoting by $(\mathbb P^2_\ell)^\vee$ the $2$-plane in $(\mathbb P^4)^\vee$ corresponding to hyperplanes containing $\ell$.   \qed
\end{lem}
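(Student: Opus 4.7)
My plan is to prove the lemma by unpacking the definition of the map $J^2_F \to J^4_Q$ appearing in the diagram \eqref{E:CubicCoDiffDiAll} in terms of the matrix $M$ of the conic bundle \eqref{E:Msolid} and Beauville's resolution \eqref{E:KappaSolid}, and then carrying out an explicit calculation on the natural generators $\partial F/\partial x_j$.

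First, I would argue that the vertical map $J^2_F \to J^4_Q$ arises as the restriction to Jacobian ideals of a natural map $H^0(\mathbb{P}^4, \mathcal{O}(2)) \to H^0(D, \Omega_D \otimes \mathcal{O}_D(2))$ coming from the conic bundle $\pi_\ell$. Using Beauville's presentation \eqref{E:KappaSolid}, this map can be described explicitly via the adjugate matrix: after writing $F = \mathbf{y}^T M \mathbf{y}$ with $\mathbf{y} = (x_3, x_4, 1)^T$, a quadratic form $P$ on $\mathbb{P}^4$ is paired with $\operatorname{adj}(M)$ to produce a section on $D$; equivalently, $\operatorname{adj}(M)|_D$ has rank one and cuts out $\kappa_D$ via \eqref{E:KappaSolid}, so contracting with $\operatorname{adj}(M)$ lands in $H^0(D, \kappa_D^{\otimes 2}) = H^0(D, \omega_D \otimes \mathcal{O}_D(2))$.

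Next, to verify the formula of the lemma I would check the claim on the generators $\partial F/\partial x_j$ of $J^2_F$, for $j = 0, \ldots, 4$. For $j \le 2$, write $\partial F/\partial x_j = \mathbf{y}^T (\partial M/\partial x_j) \mathbf{y}$, and apply the Jacobi identity
$$\operatorname{tr}\!\left(\operatorname{adj}(M)\,\frac{\partial M}{\partial x_j}\right) = \frac{\partial (\det M)}{\partial x_j} = \frac{\partial Q}{\partial x_j},$$
giving the desired image in $J^4_Q$. For $j \in \{3, 4\}$, one has $\partial F/\partial x_j = 2\,(M \mathbf{y})_{j-2}$, so the contraction with $\operatorname{adj}(M)$ involves $\operatorname{adj}(M)\cdot M|_D = \det(M)|_D \cdot I = 0$, and the image vanishes on $D$; hence $\partial F/\partial x_j \mapsto 0$ in $J^4_Q$ for $j \in \{3,4\}$. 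Together these give the stated linear map $(a_0, \ldots, a_4) \mapsto (a_0, a_1, a_2)$. Projectivizing yields the rational map $\pi_\ell \colon \mathbb{P}^4 \dashrightarrow \mathbb{P}^2$, $[a_0:\cdots:a_4] \mapsto [a_0:a_1:a_2]$, which is exactly projection from $\ell = (x_0 = x_1 = x_2 = 0)$; the dual statement $(J^4_Q)^\vee \hookrightarrow (J^2_F)^\vee$ is then the inclusion $(b_0, b_1, b_2) \mapsto (b_0, b_1, b_2, 0, 0)$, whose projectivization identifies the image with hyperplanes of $\mathbb{P}^4$ containing $\ell$, giving the claimed inclusion $(\mathbb{P}^2_\ell)^\vee \subset (\mathbb{P}^4)^\vee$.

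The main obstacle I anticipate is the first step: precisely matching the abstractly defined map $J^2_F \to J^4_Q$ in the diagram \eqref{E:CubicCoDiffDiAll} with the adjugate-trace formula above, which requires carefully tracking the dualizations and the Koszul-like resolution underlying \eqref{E:KappaSolid}. Once this identification is in place, the remaining calculation is elementary. As a sanity check aligned with the geometric content, I would note that elements of $J^2_F$ parametrize infinitesimal translations of $\mathbb{P}^4$, which act trivially on $(X, \ell)$; a translation by $(a_0, \ldots, a_4)$ shifts the projection base $\mathbb{P}^2_{x_0,x_1,x_2}$ by $(a_0, a_1, a_2)$ (the components $(a_3,a_4)$ being absorbed along $\ell$) and hence induces the first-order deformation $\sum_{i\le 2} a_i\,\partial Q/\partial x_i$ of $Q$, in agreement with the computation.
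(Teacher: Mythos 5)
The paper does not actually prove this lemma: it is quoted verbatim from Donagi--Smith with a \verb|\qed| and the citation \cite[Lem.~V.1.5]{DS_prym} standing in for the argument. So your proposal should be measured against the proof in that reference rather than against anything in this paper, and in fact your reconstruction follows essentially the same route Donagi--Smith take: identify the Prym-canonical model of $D$ inside $\mathbb P^4=\mathbb P\bigl(H^0(D,\omega_D\otimes\eta_D)^\vee\bigr)\cong\mathbb P\bigl(H^{2,1}(X)^\vee\bigr)$ with the locus of vertices of the degenerate conics of the conic bundle, so that the middle vertical map of \eqref{E:CubicCoDiffDiAll} becomes restriction of quadrics to that curve, and then compute on the generators $\partial F/\partial x_j$. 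Your explicit computation is correct: writing $F=\mathbf y^TM\mathbf y$ with $\mathbf y=(x_3,x_4,1)$ and $M$ as in \eqref{E:Msolid}, the Jacobi formula $\operatorname{tr}\bigl(\operatorname{adj}(M)\,\partial M/\partial x_j\bigr)=\partial(\det M)/\partial x_j$ handles $j\le 2$, and for $j=3,4$ the contraction is (twice) an off-diagonal entry of $\operatorname{adj}(M)M=\det(M)\cdot I$, hence vanishes identically, giving $(a_0,\dots,a_4)\mapsto(a_0,a_1,a_2)$ up to a global scalar, as required.

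The one substantive issue is the step you yourself flag as the ``main obstacle'': the identification of the abstractly defined codifferential $J^2_F\to J^4_Q$ with the adjugate contraction. This is not mere bookkeeping --- it amounts to two nontrivial inputs: (i) the codifferential of the Prym map is the Prym-canonical multiplication map $\operatorname{Sym}^2H^0(\omega_D\otimes\eta_D)\to H^0(\omega_D^{\otimes 2})$ (standard), and (ii) under Beauville's isomorphism $P(\widetilde D,D)\cong JX$ the linear system $|\omega_D\otimes\eta_D|$ is cut by the hyperplanes of $\mathbb P^4$ via the map sending $d\in D$ to the singular point of the conic $\pi_\ell^{-1}(d)$. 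Point (ii) is a form of the tangent bundle theorem of Clemens--Griffiths/Tyurin and is exactly what Donagi--Smith establish before their Lemma V.1.5; without it your argument computes the right formula for a map that has not yet been shown to be the one in the diagram. There is also a small slip in the bundle bookkeeping: $\kappa_D^{\otimes 2}\cong\omega_D\cong\mathcal O_D(2)$, not $\omega_D\otimes\mathcal O_D(2)\cong\mathcal O_D(4)$; the contraction with $\operatorname{adj}(M)$ in fact lands in $H^0(D,\mathcal O_D(4))\cong H^0(D,\omega_D^{\otimes 2})$ because the cokernel sections $v_1,v_2,v_3$ of \eqref{E:KappaSolid} live in $\kappa_D(1),\kappa_D(1),\kappa_D$ respectively, so the extra twist by $\mathcal O_D(2)$ comes from the grading of the presentation. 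Neither issue affects the final formula, and your closing sanity check via infinitesimal translations of $\mathbb C^5$ is a good heuristic confirmation, but a complete proof needs (ii) supplied, either by proof or by citation to \cite{DS_prym} or \cite{CG_cubic3}.
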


We now recall the completion of the proof of \theoremref{thm:DSprymfiber}:

\begin{proof}[Donagi--Smith's proof of \theoremref{thm:DSprymfiber}]
We apply the strategy of \eqref{E:FultonSegre} and \lemmaref{L:DS-fiber} to the diagram
\begin{equation}
\xymatrix{
\overline {\mathcal R}_{\mathcal Q}^a\ar[r] \ar[rd]& \mathcal C\times_{\mathcal A_5}\overline{\mathcal R}_6^a\ar[r] \ar[d]& \overline {\mathcal R}_6^a \ar[d]^{\mathcal P_6}\\
& \mathcal C\ar[r]& \mathcal A_5
}
\end{equation}
where $\overline R_{\mathcal Q}^a$, the moduli space of odd allowable double covers of nodal plane quintics,  plays the role of $Z_0'$.
\lemmaref{L:DSV15} implies that the differential of the Prym map  $\overline{\mathcal R}^a_6\to \mathcal A_5$ at a cover $(\pi:\widetilde D\to D)\in \overline R_{\mathcal Q}^a$ has kernel exactly of dimension $2$.  Thus at $\pi$, the fiber of $\mathcal P_6$ over $P(\widetilde D,D)=JX$ is exactly the Fano surface of lines $F(X)$.  Since this holds at every cover $\pi$, this implies that  $\overline {\mathcal R}_{\mathcal Q}^a$ is a connected  component of the fiber over $\mathcal C$, with generic fiber of $\overline {\mathcal R}_{\mathcal Q}^a$ over $\mathcal C$ connected.

  At the same time, let us use  \lemmaref{L:DSV15} to compute the degree of the map of exceptional divisors $\widetilde{\overline{\mathcal R}_{\mathcal Q}^a}\to \widetilde {\mathcal C}$; here $\widetilde{\overline{\mathcal R}_{\mathcal Q}^a}$ (respectively~$\widetilde {\mathcal C}$) is the exceptional divisor in the blow-up of $\overline{\mathcal R}_6^a$ (respectively~$\mathcal A_5$) along the locus $\overline{\mathcal R}^a_{\mathcal Q}$ (respectively~$\mathcal C$).  For this, we can pick a general cubic threefold $X$, and a general point in the projectivization of the fiber of the normal bundle, corresponding, by \lemmaref{L:DSV15}, to a general hyperplane $H\subset \mathbb P^4$.  \lemmaref{L:DSV15} says that the points lying above this chosen point correspond to those lines  $\ell\subset X$ that lie in the hyperplane section $H$.  Since $X\cap H$ is a smooth cubic surface, there are $27$ such lines.

Since the degree of the Prym map is $27$ \cite[Thm.~I.2.1]{DS_prym}, we can conclude from \lemmaref{L:DS-fiber} that 
$\overline {\mathcal R}_{\mathcal Q}^a=\mathcal C\times_{\mathcal A_5}\overline{\mathcal R}_6^a$, and we are done.
\end{proof}

\begin{proof}[Donagi--Smith's proof of \corollaryref{C:DSdegree}]
This now follows immediately from \eqref{E:degComp}, and the proof of \theoremref{thm:DSprymfiber}.  
\end{proof}

\section{Fiber of the Prym map $\calR_{3,4} \rightarrow \calA_4^{(1,2,2,2)}$ over the Eckardt cubic threefold locus}\label{S:PrymFiber}

Our goal is to describe the fiber of the generically finite degree $3$  Prym map $\calP_{3,4}: \calR_{3,4} \rightarrow \calA_4^{(1,2,2,2)}$ over the Eckardt cubic locus; recall that $\mathcal R_{3,4}$ is the moduli space of double covers of smooth genus $3$ curves branched at $4$ points, and $\calA_4^{(1,2,2,2)}$ is the moduli space of abelian varieties of dimension $4$ with a polarization of type $(1,2,2,2)$.  

In order to study the fibers of $\mathcal P_{3,4}$, it is convenient to consider a partial compactification $\overline{\mathcal R}^a_{3,4}$ of $\mathcal R_{3,4}$, over which the Prym map extends to a proper map $\mathcal P_{3,4}:\overline{\mathcal R}^a_{3,4}\to \mathcal A_4^{(1,2,2,2)}$. As mentioned in \S \ref{S:CubSolid}, in the \'etale case this is due to Beauville \cite{Beauville_schottky}; in the case of branched covers, one should technically use the Harris--Mumford \cite{HM82} notion of admissible covers (which we will call \emph{branched admissible covers}), and in fact, we prefer to use the Abramovich--Corti--Vistoli description \cite{ACV_twisted}. More precisely, we denote by $\overline{\mathcal R}_{g,r}$ the moduli space of branched admissible double covers where the base curve is a stable curve of genus $g$ with $r$ unordered marked points, and the branch points of the cover are given by those $r$ marked points; in the language of \cite[p.3560 and Prop.~4.2.2]{ACV_twisted}, the space $\overline {\mathcal R}_{g,r}$  is the quotient of $\mathscr B^{\operatorname{bal}}_{g,r}(\mathscr S_2)$ by the symmetric group $\mathscr S_r$ acting by permuting the labeling of the $r$ points. For any branched admissible double cover, the connected component of the kernel of the norm map defines a semi-abelian  Prym variety \cite[\S 5]{Beauville_schottky} \cite[\S 1]{ABH}. The condition that the semi-abelian variety be compact can be described as a combinatorial condition on the dual graph \cite[Prop.~1.3]{ABH}, which agrees with the combinatorial condition in the \'etale case given by Beauville  \cite[p.173, (**)]{Beauville_schottky}; i.e., the number of components of the cover that are interchanged by the covering involution must be equal to the number of nodes of the cover that are interchanged. We call such  branched admissible double covers \emph{branched allowable double covers}. From the deformation theory of $\overline{\mathcal R}_{g,r}$  \cite[\S 3.0.4]{ACV_twisted} it is clear that  a small deformation of a branched allowable double cover will be allowable, so that there is an open substack $\overline{\mathcal R}^a_{g,r}\subset \overline{\mathcal R}_{g,r}$ of branched allowable double covers. The Prym construction then gives a period map $$\mathcal P_{g,r}:\overline{\mathcal R}^a_{g,r}\longrightarrow \mathcal A_{g-1+\frac{r}{2}}^D$$ where $D=(1,\dots,1)$ if $r=0,2$, and otherwise $D=(1,\dots,1,2\dots,2)$ with $\frac{r}{2}-1$ copies of $1$; the argument  of \cite[Prop.~6.3]{Beauville_schottky} shows that this extended period map is proper. 

Now to describe the fibers of $\mathcal P_{3,4}$, 
recall that given an Eckart cubic threefold $(X,p)$, there is an associated cubic surface $S$, together with a smooth  hyperplane section $E\subset S$. This elliptic curve is naturally contained in the Fano surface $F(X)$ of lines, with $E$ parameterizing the  $\tau$-invariant lines in $X$ through the Eckardt point $p$; i.e., we may view  $E$ as the $1$-dimensional component of the $\tau$-fixed locus  of the Fano surface $F(X)$, and $E$ is isomorphic to the elliptic curve $S \cap \Pi$; see \lemmaref{lem:invline}.

\begin{theorem} \label{thm:prymfiber}
Let $(X,p)$ be an Eckardt cubic threefold with involution $\tau$.  The fiber of the Prym map for allowable
double covers of genus $3$ curves branched at $4$ points
$$
\calP_{3,4}: \overline{\calR}_{3,4}^a \longrightarrow \calA_4^{(1,2,2,2)}
$$
over the dual abelian variety  $(JX^{-\tau})^\vee \in \calA_4^{(1,2,2,2)}$ of the anti-invariant part of the intermediate Jacobian $JX$ is isomorphic to the elliptic curve $E$:
$$
\mathcal P_{3,4}^{-1}\left((JX^{-\tau})^\vee  \right)\cong E \subset F(X).
$$
\end{theorem}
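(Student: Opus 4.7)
The approach is modeled on Donagi and Smith's proof of \theoremref{thm:DSprymfiber} reviewed in Section \ref{S:DS}, applied to the Prym map $\calP_{3,4}$ in place of $\calP_6$. First I would construct the natural inclusion $E \hookrightarrow \calP_{3,4}^{-1}((JX^{-\tau})^\vee)$, and then use the local-degree argument of \lemmaref{L:DS-fiber} to promote it to an equality, relying on the known degree $\deg \calP_{3,4} = 3$ (see \corollaryref{cor:main3}).

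For the inclusion, \lemmaref{lem:invline} parameterizes the $\tau$-invariant lines through the Eckardt point $p$ by $E = S \cap \Pi$. \propositionref{prop:projcone} shows that for a generic such $\ell'$ the discriminant quintic splits as $D = C \cup L$, with $C$ a smooth plane quartic meeting a transverse line $L$, and the restriction $\widetilde C \to C$ of the discriminant double cover is a branched double cover of a smooth genus $3$ curve at the $4$ points $C \cap L$, thus defining a point of $\overline{\calR}_{3,4}^a$. By \theoremref{T:conedecomp}, the Prym variety $P(\widetilde C, C)$ is canonically isomorphic to $(JX^{-\tau})^\vee$, so $\ell' \mapsto (\widetilde C \to C)$ defines a morphism $E \to \calP_{3,4}^{-1}((JX^{-\tau})^\vee)$. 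Injectivity follows from \theoremref{T:Eck-from-quart}: the triple $(C, \kappa_C, L)$, recoverable from the cover by \lemmaref{L:kappaCEck}, determines $(X, p, \ell')$ up to projective equivalence.

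To show that this inclusion is an equality, I would apply \lemmaref{L:DS-fiber} to the commutative diagram
$$
\xymatrix{
\mathcal{E} \ar[r] \ar[d] & \overline{\calR}_{3,4}^a \ar[d]^{\calP_{3,4}} \\
\calM \ar[r]^{\calP} & \calA_4^{(1,2,2,2)}
}
$$
where $\mathcal{E} \to \calM$ is the universal elliptic curve $E = S \cap \Pi$ over the moduli of cubic surface pairs, and the top horizontal arrow is the morphism from the previous paragraph. Taking $Z_0'$ to be the closure of the image of $\mathcal{E}$ in $\overline{\calR}_{3,4}^a$ and $Z := \calP(\calM) \subset \calA_4^{(1,2,2,2)}$, the generic fiber of $Z_0' \to Z$ is a single copy of $E$, in particular connected. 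With $\deg \calP_{3,4} = 3$ in hand, \lemmaref{L:DS-fiber} reduces the proof to verifying that the local degree of the induced map $\widetilde{Z_0'} \to \widetilde Z$ on exceptional divisors is at least $3$; by \eqref{E:degComp} this forces $Z_0' = \calP_{3,4}^{-1}(Z)$ and identifies the fiber over $(JX^{-\tau})^\vee$ with $E$.

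The main obstacle is this local-degree computation, which requires an analog of \lemmaref{L:DSV15} for $\calP_{3,4}$ at covers arising from Eckardt cubic threefolds. Writing $X = (f + l x_4^2 = 0)$ in coordinates adapted to $p = [0,0,0,0,1]$ and $\ell' = (x_0 = x_1 = x_2 = 0)$, the codifferential of $\calP_{3,4}$ can be computed in terms of the Jacobian ring of $X$. I would expect that the conormal directions to $Z$ in $\calA_4^{(1,2,2,2)}$ at $(JX^{-\tau})^\vee$ are identified, via the codifferential, with linear forms $h \in H^0(\mathbb P^4, \mathcal O(1))$ vanishing at $p$, i.e., with hyperplanes $H \ni p$ in $\mathbb P^4$, and that the induced map of exceptional divisors sends an infinitesimal deformation of $\ell'$ inside $X$ to the unique hyperplane through $p$ spanned by $\ell'$ and that infinitesimal direction. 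The preimage in $\widetilde{Z_0'}$ of a generic such hyperplane $H$ then consists of the invariant lines $\ell' \subset X$ through $p$ that lie in $H$, which are parameterized by $E \cap H$. Since $E$ is a plane cubic inside the $2$-plane $(l = x_4 = 0) \subset T_p X$, and a generic $H$ cuts this plane transversely in a line meeting $E$ in $3$ points, we obtain local degree exactly $3$, matching $\deg \calP_{3,4}$ and completing the argument.
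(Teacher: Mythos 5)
Your proposal is correct and follows essentially the same route as the paper: the paper also runs the Donagi--Smith local-degree argument of \lemmaref{L:DS-fiber} over the Eckardt locus, computes the codifferential of $\calP_{3,4}$ in Jacobian-ring terms (\propositionref{P:stablequartic}, \corollaryref{C:DSV15}), identifies the relevant normal directions with hyperplanes through $p$, and gets local degree $3$ from $E\cap H$. The only imprecision is in the step you flag as the main obstacle: the conormal space to the Eckardt locus is $3$-dimensional, so it is not all linear forms vanishing at $p$ (a $4$-dimensional space) but only those $\sum_{i=0}^3 a_ix_i$ with $l(a_0,\dots,a_3)=0$ (see \eqref{E:CokerEckCub}); this does not affect your degree count, since a general such $H$ still meets the plane of $E$ in a line.
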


\begin{rem}
We state this theorem for the moduli stacks.  For the coarse moduli spaces, we have $\mathcal P_{3,4}^{-1}\left((JX^{-\tau})^\vee  \right)\cong E/\operatorname{Aut}(X,p)$; note that the Eckardt automorphism $\tau\in \operatorname{Aut}(X,p)$ acts trivially on $E$.  
\end{rem}

We prove this theorem over the course of this section.  Let us note here that we will also obtain the following corollary: 

\begin{cor}[{\cite{BCV_pav122,NR_pav122}, \cite[Thm.~0.3]{NO_prymtorelli}}]\label{C:degree}
The Prym map
$$
\calP_{3,4}: \overline{\calR}_{3,4}^a \longrightarrow \calA_4^{(1,2,2,2)}
$$
has degree $3$, corresponding to the three lines through the Eckardt point $p$ contained in a general hyperplane section of an Eckardt cubic threefold $(X,p)$ through the Eckardt point. 
\end{cor}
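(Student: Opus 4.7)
The plan is to adapt the Donagi--Smith strategy reviewed in Section \ref{S:DS} to the branched Prym map $\calP_{3,4}$. First I would construct an injective morphism $\Phi\colon E\to \overline{\calR}_{3,4}^a$ landing in the fiber over $(JX^{-\tau})^\vee$. By \lemmaref{lem:invline}, the elliptic curve $E$ parameterizes the $\tau$-invariant lines $\ell'\subset X$ through the Eckardt point $p$. For each such $\ell'$, \propositionref{prop:projcone} produces a pseudo-double cover $\widetilde D \to D$ of a nodal plane quintic $D=C\cup L$, and the restriction $\widetilde C \to C$ to the quartic component is a branched double cover over the four points $L\cap C$. \theoremref{T:conedecomp} identifies $JX^{-\tau}$ with $J(\widetilde C)/\pi^*J(C)\cong P(\widetilde C,C)^\vee$ as polarized abelian varieties, so $\Phi(\ell')$ lies in $\calP_{3,4}^{-1}((JX^{-\tau})^\vee)$. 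For the finitely many $\ell'$ meeting some line of $S$, the quartic $C$ acquires nodes, but the cover remains an allowable branched admissible double cover inherited from the pseudo-double cover of $D$, so $\Phi$ extends to all of $E$. Injectivity follows from \theoremref{T:Eck-from-quart} together with \lemmaref{L:kappaCEck}: the cover $\widetilde C\to C$ determines the triple $(C\subset \bP^2,\kappa_C,L)$, which reconstructs $(X,p,\ell')$ up to projective equivalence.

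Next, to see that $\Phi(E)$ is a connected component of the fiber, I would perform a codifferential computation analogous to \lemmaref{L:DSV15}, aiming to show that the kernel of the codifferential of $\calP_{3,4}$ at a general $\Phi(\ell')$ has dimension exactly $1=\dim E$. In the \'etale setting of Donagi--Smith, this kernel is read off from the Jacobian ring of the cubic and factors through quadrics on $\bP^2$; in our branched setting the analogous description should involve $H^0(C,\omega_C\otimes \calO_C(2))$ together with residue information at the branch divisor $\beta=L\cap C$, following the approach of Naranjo--Ortega and Lange--Ortega. The main technical obstacle will be carrying out this branched codifferential calculation carefully and matching the resulting $1$-dimensional kernel with the tangent direction to $E$ at $\ell'$. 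Once this is established, since $E$ is smooth and connected and $\Phi$ is injective, $\Phi$ realizes $E$ as an irreducible component of the fiber with connected generic fiber over $\calM$.

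Finally, I would apply \lemmaref{L:DS-fiber} with $Y=\calA_4^{(1,2,2,2)}$, $Y'=\overline{\calR}_{3,4}^a$, $Z$ the closure of the Eckardt locus $\calM'\subset \calA_4^{(1,2,2,2)}$, and $Z_0'$ the $8$-dimensional total space over $\calM$ of the $E$-bundle swept out by $\Phi$. The total degree of the Prym map is $\deg \calP_{3,4}=3$ by \cite{BCV_pav122, NR_pav122, NO_prymtorelli}. For the local degree of the induced map of exceptional divisors along $Z_0'$, a generic normal direction to $\calM'$ at $(JX^{-\tau})^\vee$ corresponds, via the branched analogue of \lemmaref{L:DSV15}, to a generic hyperplane $H\subset \bP^4$ through $p$; the points of $E$ mapping to this direction are then the rulings of the cubic cone $X\cap T_pX$ lying inside $H$, equivalently the three intersection points of the plane cubic $E\subset \Pi$ with the line $H\cap \Pi$. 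This gives local degree at least $3$, so \lemmaref{L:DS-fiber} forces $Z_0'=Z\times_Y Y'$, and in particular the fiber of $\calP_{3,4}$ over $(JX^{-\tau})^\vee$ is exactly $\Phi(E)\cong E$. \corollaryref{C:degree} then follows via \eqref{E:degComp}, mirroring the derivation of \corollaryref{C:DSdegree} from \theoremref{thm:DSprymfiber}.
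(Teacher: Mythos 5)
Your proposal is correct and follows essentially the same route as the paper: it is the Donagi--Smith exceptional-divisor argument, with the local degree along $\overline{\calR}^a_{\calQ,\calE}$ computed as the three lines through $p$ in a general hyperplane section through the Eckardt point, the known degree $3$ from \cite{BCV_pav122, NR_pav122, NO_prymtorelli} fed into \lemmaref{L:DS-fiber}, and the conclusion via \eqref{E:degComp}. The only organizational difference is in the codifferential step you flag as the technical obstacle: the paper carries it out not by a direct branched residue computation but by embedding the branched picture $\tau$-equivariantly into the genus-$6$ \'etale setting (\propositionref{P:stablequartic} and \corollaryref{C:DSV15}), which reduces everything to \lemmaref{L:DSV15}.
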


\begin{rem}
The proof of \theoremref{thm:prymfiber} uses the fact that the degree of the Prym map $\mathcal P_{3,4}$ is equal to $3$, which is due to \cite{BCV_pav122,NR_pav122, NO_prymtorelli}, so that  \corollaryref{C:degree} does not provide a new derivation of the degree.  The new result is that one recovers the degree of the Prym map in terms of the geometry of Eckardt cubic threefolds. 
\end{rem}

To prove \theoremref{thm:prymfiber}, we use the same strategy as in the previous section.  Namely, 
we apply the strategy of \eqref{E:FultonSegre} and \lemmaref{L:DS-fiber} to the diagram
\begin{equation}\label{E:EckFiberMainDi}
\xymatrix{
\overline {\mathcal R}_{\mathcal Q, \mathcal E}^a\ar[r] \ar[rd]& \mathcal C_{\mathcal E}\times_{\mathcal A_4^{(1,2,2,2)}}\overline{\mathcal R}_{3,4}^a\ar[r] \ar[d]& \overline {\mathcal R}_{3,4}^a \ar[d]^{\mathcal P_{3,4}}\\
& \mathcal C_{\mathcal E}\ar[r]& \mathcal A_{4}^{(1,2,2,2)}
}
\end{equation}
where the image of $\mathcal C_{\mathcal E}$ plays the role of $Z$, and the image of $\overline {\mathcal R}_{\mathcal Q, \mathcal E}^a$ plays the role of $Z_0'$.
Here $\mathcal C_{\mathcal E}$ is the moduli space of Eckardt cubic threefolds $(X, p)$, and  ${\mathcal R}_{\mathcal Q,\mathcal E}$ is the moduli space of double covers $\pi:\widetilde C\to C$ over smooth plane quartics branched at the four points of intersection of a transverse line, determined by an odd theta characteristic (see \lemmaref{L:kappaCEck}).  In other words,  $\overline {\mathcal R}_{\mathcal Q, \mathcal E}^a$ is the space of covers arising from projecting an Eckardt cubic threefold from a line through the Eckardt point, and restricting the cover to the plane quartic in the discriminant.  

We therefore start by computing the differentials of the morphisms above.

\subsection{Differential to the period map for Eckardt cubic threefolds}\label{S:diffEck}

Let $\mathcal C_{\mathcal E}$ be the moduli space of Eckardt cubic threefolds $(X, p)$. Assuming that $(X,p)$ is a smooth Eckardt cubic threefold defined by $F=0$ as in \eqref{eqn:eckardt}, and $R_F$ is defined to be the Jacobian ring,  we saw in the proof of \lemmaref{lem:dimpm} that $H^{2,1}(X)^{-\tau}=(R_F^1)^{-\tau}$  and $H^{1,2}(X)^{-\tau}=(R_F^4)^{-\tau}$, 
so that we also have  
$$T_{(JX^{-\tau})^\vee} \calA_4^{(1,2,2,2)}=\operatorname{SymHom}(H^{2,1}(X)^{-\tau},H^{1,2}(X)^{-\tau})=
\operatorname{SymHom}((R_F^1)^{-\tau},(R_F^4)^{-\tau}).
$$
At the same time,  we have  
$$
T _{(X,p)}\mathcal C_{\mathcal E}=(T_X\mathcal C)^{\tau}=(R_F^3)^{\tau}.
$$
Therefore, with regards to the period map
$$
\mathcal C_{\mathcal E}\longrightarrow \mathcal A_4^{(1,2,2,2)}
$$
we obtain a map of tangent spaces 
$$
T_{(X,p)}\mathcal C_{\mathcal E}\longrightarrow T_{(JX^{-\tau})^\vee} \calA_4^{(1,2,2,2)}
$$
\begin{equation}\label{E:EckCubDiffPer}
(R^3_F)^{\tau}\longrightarrow \operatorname{SymHom}((R^1_F)^{-\tau},(R^4_F)^{-\tau}).
\end{equation}

Using Macaulay's Theorem to identify $R^4_F \cong (R^1_F)^\vee$, 
our differential is 
$$
(R^3_F)^{\tau}\longrightarrow \operatorname{Sym}^2((R^1_F)^{-\tau})^\vee,
$$
and the dual map, the codifferential, is canonically
$$
H^0(\mathbb P^3,\mathcal O_{\mathbb P^3}(2))=\operatorname{Sym}^2((R^1_F)^{\tau})\longrightarrow (R^2_F)^{\tau}.
$$
Here we are using the identification $R^1_F=\mathbb C\langle x_0, \dots,x_4\rangle$, where the action of $\tau$ is given by $x_4\mapsto -x_4$, so that $(R^1_F)^{\tau}=\mathbb C\langle x_0,\dots,x_3\rangle$.  
We  see that the codifferential is the map that takes a quadric in $\mathbb P^3=(x_4=0)$, views it as an invariant quadric on $\mathbb P^4$, and then sends it to its class in the invariant Jacobian ring. 
Algebraically, the kernel of this map is clearly $J^2_F\cap \operatorname{Sym}^2\mathbb C\langle x_0,\dots,x_3\rangle$.  
Using the equation \eqref{eqn:eckardt} for $F$, we see that 
$J^2_F$ consists of quadrics of the form $a_0(\frac{\partial f}{\partial x_0}+\frac{\partial l}{\partial x_0}x_4^2)+\cdots +a_3(\frac{\partial f}{\partial x_3}+\frac{\partial l}{\partial x_3}x_4^2)+2a_4lx_4$.  In order for this not to contain $x_4$, we must have $a_4=0$, and also $0=\sum_{i=0}^3 a_i\frac{\partial l}{\partial x_i}=l(a_0,\dots,a_3)$.  
We therefore have:
\begin{equation}\label{E:CokerEckCub}
(T_{(JX^{-\tau})^\vee}\mathcal A_4^{(1,2,2,2)}/T_{(X,p)}\mathcal C_{\mathcal E})^\vee \cong J^2_F\cap \operatorname{Sym}^2\mathbb C\langle x_0,\dots,x_3\rangle \cong (l=a_4=0) \cong \mathbb C^3.
\end{equation}
 We identify $J^2_F\cap \operatorname{Sym}^2\mathbb C\langle x_0,\dots,x_3\rangle$ with $(l=a_4=0) \cong \mathbb C^3
$ via $(a_0,\dots,a_3)\mapsto \sum_{i=0}^3 a_i\frac{\partial F}{\partial x_i}$, and call these 
the quadratics $X_t$ polar to points $t=(a_0,\dots,a_3,0)$ of $(l=a_4=0)\subset \bP^4$ with respect to $X$.

\begin{cor}[Infinitesimal Torelli theorem]\label{C:InfTorHdg}
The differential of the period map $\mathcal C_{\mathcal E}\to \mathcal A^{(1,2,2,2)}_4$ is injective.
\end{cor}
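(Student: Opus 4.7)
The plan is to deduce infinitesimal injectivity of the differential from the explicit cokernel computation carried out in \eqref{E:CokerEckCub}, via a dimension count. It is most convenient to work with the codifferential and show it is surjective.

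First, I would recall that, by the discussion preceding the statement, the codifferential at $(X,p)$ is the multiplication map
\begin{equation*}
\mu\colon \operatorname{Sym}^2(\mathbb{C}\langle x_0,\dots,x_3\rangle) \;=\; H^0(\mathbb{P}^3,\mathcal{O}_{\mathbb{P}^3}(2)) \;\longrightarrow\; (R_F^2)^{\tau},
\end{equation*}
sending a quadric $q(x_0,\dots,x_3)$ (viewed as a $\tau$-invariant quadric on $\mathbb{P}^4$) to its class in the invariant part of the Jacobian ring. Equation \eqref{E:CokerEckCub} identifies $\ker\mu = J_F^2\cap\operatorname{Sym}^2\mathbb{C}\langle x_0,\dots,x_3\rangle$ with the three-dimensional subspace $(l=a_4=0)\subset\mathbb{C}^5$, so the image of $\mu$ has dimension $10-3=7$.

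Next I would verify that $\dim(R_F^2)^{\tau}=7$ by an explicit monomial count. The $\tau$-invariant quadrics in $\mathbb{C}[x_0,\dots,x_4]_2$ form an $11$-dimensional space, spanned by the ten monomials $x_ix_j$ with $0\le i\le j\le 3$ together with $x_4^2$. The $\tau$-invariant part of $J_F^2$ is spanned by the four partials $\partial F/\partial x_i = \partial f/\partial x_i+(\partial l/\partial x_i)x_4^2$ for $i=0,1,2,3$, and these are linearly independent because the cubic surface $S=(f=x_4=0)$ is smooth, which forces linear independence of the partials $\partial f/\partial x_0,\dots,\partial f/\partial x_3$ in $\mathbb{C}[x_0,\dots,x_3]_2$. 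Hence $\dim(R_F^2)^{\tau}=11-4=7$.

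Combining the two computations, $\mu$ maps onto a $7$-dimensional subspace of a $7$-dimensional target, so $\mu$ is surjective. Dually, the differential $T_{(X,p)}\mathcal{C}_{\mathcal{E}}\to T_{(JX^{-\tau})^\vee}\mathcal{A}_4^{(1,2,2,2)}$ is injective; its rank equals $10-3=7=\dim\mathcal{C}_{\mathcal{E}}$, the dimension of the moduli space $\mathcal{M}$ of cubic surface pairs recorded in the introduction. The heart of the argument is the explicit description of $\ker\mu$ furnished by \eqref{E:CokerEckCub}, which in turn rests on the special form \eqref{eqn:eckardt} of the defining polynomial $F$ and the surjectivity of the linear form $l$; the remaining work is routine dimension bookkeeping, and I do not foresee a serious obstacle beyond tracking the $\tau$-decomposition carefully.
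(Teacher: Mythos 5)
Your proposal is correct and follows essentially the same route as the paper: both arguments are a rank--nullity count hinging on the three-dimensional kernel of the codifferential identified in \eqref{E:CokerEckCub}, with your version dualized (you verify $\dim (R_F^2)^{\tau}=7$ by a monomial count and conclude surjectivity of the codifferential, while the paper uses $\dim (R_F^3)^{\tau}=7=\dim\mathcal C_{\mathcal E}$ and concludes injectivity of the differential directly). The two dimension computations are equivalent under the Macaulay-type duality already invoked in \S\ref{S:diffEck}, so there is no substantive difference.
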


\begin{proof}
We consider \eqref{E:EckCubDiffPer}.  
Since $\dim (R^3_F)^{\tau}=7$, $\dim \operatorname{Sym}^2 (R^1_F)^{-\tau}=10$, and the cokernel of  \eqref{E:EckCubDiffPer} has dimension $3$ \eqref{E:CokerEckCub}, it follows that \eqref{E:EckCubDiffPer} is injective.
\end{proof}

\subsection{Differential of the inclusion of  the moduli space of double covers of smooth plane quartics branched at four collinear points into the moduli space of double covers of genus $3$ curves branched at four points}  

We let ${\mathcal R}_{\mathcal Q,\mathcal E}$ be the moduli space of double covers $\pi:\widetilde C\to C$ over smooth plane quartics branched at the four points of intersection of a transverse line, determined by an odd theta characteristic (see \lemmaref{L:kappaCEck}),  and we consider the inclusion
$$
{\mathcal R}_{\mathcal Q,\mathcal E}\longrightarrow \mathcal R_{3,4}.
$$
Giving a cover $\pi:\widetilde C\to C$ in $\mathcal R_{\mathcal Q,\mathcal E}$ is equivalent to giving the plane quartic $C$, the branch points of the cover, which are themselves determined by a line $L$, and an odd square root of the branch divisor.  
Thus the space $\mathcal R_{\mathcal Q,\mathcal E}$ has dimension $6+2+0=8$.  
Moreover, the first order deformations of $\pi$ as covers in  $\mathcal R_{\mathcal Q,\mathcal E}$ are identified with the  first order deformations of the plane quintic $C\cup L$ that remain the union of a plane quartic and a line.
Up to change of coordinates, we may assume that the qunitic polynomial $Q$ defining $C\cup L$ is $Q=gx_0$ for some quartic $g$.  Then $T_\pi\mathcal R_{\mathcal Q,\mathcal E} \subset T_{\widetilde C\cup \widetilde L\to C\cup L} \mathcal R_{\mathcal Q}  \cong R^5_Q$ can be identified with the span of the quintic monomials divisible by $x_0$; indeed, these remain the union of a plane quartic and a line and then a dimension count gives the equality. Denote this space by $R_Q^5|x_0$.  More canonically, one can see that these deformations are exactly the locally trivial deformations of the plane quintic $C\cup L$, and are then identified with $H^0_{\mathfrak m}(R_Q)^5$, as in Remark \ref{R:loc_triv_quintic}, with dual $H^0_{\mathfrak m}(R_Q)^4$.

\begin{rem}[Locally trivial deformations of nodal plane quintics] \label{R:loc_triv_quintic}
If $D$ is a singular plane quintic, then the Jacobian ring no longer satisfies the duality of Macaulay's theorem.  However, 
there is still a natural map  
\begin{equation}\label{E:R5vee-R4}
(R^5_Q)^\vee\to R^4_Q
\end{equation}
 and the image is  identified with the dual of the space of locally trivial deformations.
More precisely, fix the maximal ideal  $\mathfrak m=(x_0,x_1,x_2)$  in the Jacobian ring $R_Q$.  There is an inclusion of graded rings \cite[(11)]{Sernesi_jacobian}
$ H^0_{\mathfrak m}(R_Q)\subset  R_Q$ 
and the ring $H^0_{\mathfrak m}(R_Q)$ satisfies a similar duality as in Macaulay's theorem; namely, there is an isomorphism $H^0_{\mathfrak m}(R_Q)^{i} \cong (H^0_{\mathfrak m}(R_Q)^{9-i})^\vee$ \cite[Thm.~3.4]{Sernesi_jacobian} (we note that this isomorphism is more subtle than in Macaulay's theorem; see \cite[Rmk.~3.5]{Sernesi_jacobian}). The space $H^0_{\mathfrak m}(R_Q)^5\subset R_Q^5$ is identified with the first order locally trivial deformations \cite[Cor.~2.2]{Sernesi_jacobian}.  In summary, we have morphisms $(R^5_Q)^\vee \to (H^0_{\mathfrak m}(R_Q)^5)^\vee \cong H^0_{\mathfrak m}(R_Q)^4\subset R^4_Q$, and the image of the composition $(R^5_Q)^\vee\to R^4_Q$ is identified with the dual of the space of locally trivial deformations, as claimed. 
\end{rem}

Another way to verify that $H^0_{\mathfrak m}(R_Q)^5 = R_Q^5|x_0$ is to use the  identification $H^0_{\mathfrak m}(R_Q) = J_f^{\mathrm{sat}}/J_f$ \cite[(5)]{Sernesi_jacobian}. In fact, by \cite[(11)]{Sernesi_jacobian} when $d$ is sufficiently large, the graded piece of our Jacobian ring $J_Q^d$ is of dimension $4$ (corresponding to the four nodes $C \cap L$ of the quintic $C\cup L$) and consists of monomials which are not divisible by $x_0$ (corresponding to the infinitesimal deformations smoothing the four nodes). Using a similar argument, we get that $H^0_{\mathfrak m}(R_Q)^4 = R_Q^4|x_0$.

At the same time, $T_{\pi}\mathcal R_{3,4}=H^1(C,TC\otimes \mathcal O_C(L))=H^0(C,\mathcal O_C(3))^\vee$, 
  and so the differential can be described as a map 
$$
T_\pi \mathcal R_{\mathcal Q,\mathcal E}\longrightarrow T_\pi \mathcal R_{3,4}
$$
$$
H^0_{\mathfrak m}(R_Q)^5\to H^0(C,\mathcal O_C(3))^\vee.
$$
By the above argument, the codifferential sits in a commutative diagram
\begin{equation}\label{E:x0Mult-di}
\xymatrix{
H^0(C,\mathcal O_C(3))\ar[rd]^{\cdot x_0} \ar[r]& H^0_{\mathfrak m}(R_Q)^4 \ar[d] \\
& R_Q^4
}
\end{equation}
where the diagonal map is 
given by multiplication by $x_0$, and the vertical map is defined in Remark \ref{R:loc_triv_quintic}. 

The kernel of the diagonal map is $x_0\mathbb C[x_0,x_1,x_2]_3\cap J^4_Q$.  
Since $J^4_Q=\mathbb C\langle g+x_0\frac{\partial g}{\partial x_0}, x_0\frac{\partial g}{\partial x_1}, x_0\frac{\partial g}{\partial x_2}\rangle$, we see that the kernel is the space consisting of linear combinations $a_0(g+x_0\frac{\partial g}{\partial x_0})+a_1( x_0\frac{\partial g}{\partial x_1})+a_2(x_0\frac{\partial g}{\partial x_2})$, with $a_0=0$. 
In other words,
\begin{equation}\label{E:CokerQuarLine}
R^4_Q/H^0(C,\mathcal O_C(3))\cong J^4_Q \cap x_0\mathbb C[x_0,x_1,x_2]_3  \cong \mathbb C^2.
\end{equation}
where the identification with $\mathbb C^2$ is given by $(0,a_1,a_2)\mapsto x_0\sum_{i=1}^2 a_i\frac{\partial g}{\partial x_i}$.

\begin{rem}
The condition $a_0=0$ is more canonically the condition $l(a_0,a_1,a_2)=0$.
\end{rem}

\subsection{Differential of the inclusion of the moduli space of allowable double covers of nodal plane quartics branched at four collinear points into the moduli space of allowable double covers of genus $3$ curves branched at $4$ points}  We have 
$$
T_\pi \overline{\mathcal R}^a_{\mathcal Q,\mathcal E}\longrightarrow T_\pi \overline{\mathcal R}^a_{3,4}
$$
$$
H^0_{\mathfrak m}(R_Q)^5\to H^0(C,\Omega_C\otimes \mathcal O_C(2))^\vee
$$
with codifferential sitting in a commutative diagram.
$$
\xymatrix{
H^0(C,\Omega_C\otimes \mathcal O_C(2))\ar[d]_{j_*} \ar[r]& H^0_{\mathfrak m}(R_Q)^4 \ar[d]\\
H^0(C,\mathcal O_C(3)) \ar[r]^{\cdot x_0}&  R_Q^4
}
$$
Commutativity follows from \eqref{E:x0Mult-di} and a degeneration argument (see e.g., 
\cite[\S IV.3.2]{DS_prym}).

\subsection{Differential of the Prym map $\mathcal P_{3,4}$ for smooth double covers}
The differential of the Prym map $\mathcal P_{3,4}:\mathcal R_{3,4}\to \mathcal A_4^{(1,2,2,2)}$ at a branched cover $\pi:\widetilde C\to C$ of a smooth plane quartic 
$$T_\pi \mathcal R_{3,4}\longrightarrow T_\pi \mathcal A_{4}^{(1,2,2,2)}$$
is canonically identified with 
$$
H^1(C,TC\otimes \mathcal O_C(1))\to \operatorname{Sym}^2H^0(C,\omega_C\otimes \eta_C)^\vee
$$
where $\eta_C$  is the square root of the branch divisor $Br$ determining the cover (i.e., $\eta_C^{\otimes 2}\cong \mathcal O_C(Br)$).  
The  codifferential is given as the cup product
$$
\operatorname{Sym}^2H^0(D,\omega_C\otimes \eta_C)\to H^0(C,\omega_C^{\otimes 2}\otimes \mathcal O_C(Br)).
$$
For the restriction of an odd pseudo-double cover $\widetilde C\cup \widetilde L\to C\cup L$ to $\pi:\widetilde C\to C$, this is the canonical restriction 
$$
H^0(\mathbb P^4,\mathcal O_{\mathbb P^4}(2))\to H^0(C,\mathcal O_C(3))
$$
of a quadric to the Prym canonical model $C\subset \mathbb P^4$.  Note that the Prym canonical model of $C$ is also the image of $C$ under the Prym canonical map of the plane quintic $C\cup L$ and its associated pseudo-double cover.

\subsection{Differential of the Prym map $\mathcal P_{3,4}$ for allowable double covers}
The differential of the Prym map $\mathcal P_{3,4}:\overline{\mathcal R}^a_{3,4}\to \mathcal A_4^{(1,2,2,2)}$ at an allowable double cover  $\pi:\widetilde C\to C$ of a nodal plane quartic 
$$T_\pi \overline{\mathcal R}^a_{3,4}\longrightarrow T_{\mathcal P_{3,4}(\pi)} \mathcal A_4^{(1,2,2,2)}$$
is canonically identified with 
$$
H^0(C,\Omega_C\otimes \omega_C\otimes \mathcal O_C(1))^\vee\to \operatorname{Sym}^2H^0(C,\omega_C\otimes \eta_C)^\vee
$$
where $\eta_C^{\otimes 2}\cong \mathcal O_C(Br)$ is the square root of the branch divisor determining the cover.  (For us, this will be  the locus where the line meets the plane quartic transversally.) 
The  codifferential is given as the cup product
$$
\operatorname{Sym}^2H^0(D,\omega_C\otimes \eta_C)\to H^0(C,\Omega_C\otimes \omega_C\otimes \mathcal O_C(Br)).
$$
For the restriction of an odd pseudo-double cover $\widetilde C\cup \widetilde L\to C\cup L$ to $\pi:\widetilde C\to C$, this is the following map. 
$$
\xymatrix{
H^0(\mathbb P^4,\mathcal O_{\mathbb P^4}(2)) \ar[r]  \ar[rd] &  H^0(C,\Omega_C\otimes \mathcal O_C(2)) \ar[d]_{j_*}\\
& H^0(C,\mathcal O_C(3))
}
$$

\subsection{Interlude connecting to the case of cubic threefolds}
Suppose now we have a smooth Eckardt cubic threefold $(X,p)$ together with a  line $\ell' \subset X$ passing through the Eckardt point $p$, giving rise to a pseudo double cover $\pi:\widetilde C\cup \widetilde L \to C\cup L$ of a plane quartic and a transverse line  $C\cup L\subset \mathbb P^2$.  As in the discussion above, assume that $X=(F=0)$ and $C\cup L=(Q=0)$.

For the commutative diagram (which at this point we have not proven is cartesian)
$$
\xymatrix{
\overline{\mathcal R}^a_{\mathcal Q,\mathcal E}\ar[r] \ar[d]& \overline{\mathcal R}^a_{3,4} \ar[d]\\
\mathcal C_{\mathcal E}\ar[r]& \mathcal A_4^{(1,2,2,2)}
}
$$
we have:

\begin{proposition}\label{P:stablequartic}
In the notation above, we have the commutative diagram of codifferentials:
\begin{equation}\label{E:EckCubicCoDiffDiAll}
\xymatrix@C=.5em{
&& &R^4_Q & H^0(\mathcal O_C(3)) \ar[l]_<>(0.5){\cdot x_0}& J^4_Q\cap x_0\mathbb C[x_0,x_1,x_2]_3=\mathbb C^2 \ar@{_(->}[l]\\
T^\vee_\pi \overline{\mathcal R}^a_{\mathcal Q,\mathcal E}& T_\pi^\vee \overline{\mathcal R}^a_{3,4} \ar@{->}[l]& (T_\pi \overline{\mathcal R}^a_{3,4}/T_\pi \overline{\mathcal R}^a_{\mathcal Q,\mathcal E})^\vee \ar@{_(->}[l] & (R^5_Q|x_0)^\vee \ar[u] & H^0(\Omega_C(2)) \ar@{->}[l] \ar[u]^{j_*}& K_{\mathcal E}=\mathbb C^2 \ar@{_(->}[l] \ar[u]\\
T^\vee_{(X,p)}\mathcal C_{\mathcal E} \ar[u]& T_{ J}^\vee \mathcal A_4^{} \ar@{->}[l] \ar[u]& (T_{J}\mathcal A_4^{}/T_X\mathcal C)^\vee \ar@{_(->}[l] \ar[u]& (R_F^2)^{\tau}  \ar[u]& H^0(\mathcal O_{\mathbb P^3}(2)) \ar@{->}[l]  \ar[u]& J^2_F\cap \mathbb C[x_0,\dots,x_3]_2=\mathbb C^3  \ar@{_(->}[l] \ar[u]
}
\end{equation}
with the vertical map on the right induced by the natural map from cubic threefolds $J^2_F\to J^4_Q$.  
\end{proposition}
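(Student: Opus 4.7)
The proof amounts to verifying that the Eckardt diagram \eqref{E:EckCubicCoDiffDiAll} is obtained from the non-Eckardt diagram \eqref{E:CubicCoDiffDiAll} by passing to the appropriate $\tau$-invariant and locally-trivial summands. The bottom row is already established in Section \ref{S:diffEck}: the codifferential $H^0(\mathbb P^3, \mathcal O(2)) \twoheadrightarrow (R_F^2)^\tau$ of the period map on Eckardt cubic threefolds is the quotient of the classical Griffiths-residue codifferential by the action of $\tau$, and its kernel is the $3$-dimensional space $J_F^2 \cap \mathbb C[x_0,\ldots,x_3]_2$ computed in \eqref{E:CokerEckCub}. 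The middle row is the codifferential of the inclusion $\overline{\mathcal R}^a_{\mathcal Q,\mathcal E} \hookrightarrow \overline{\mathcal R}^a_{3,4}$ constructed in the paragraphs immediately preceding the proposition, where the space $(R_Q^5|x_0)^\vee = H^0_{\mathfrak m}(R_Q)^4$ of locally trivial deformations of the reducible quintic $C \cup L$ captures the tangent directions to $\overline{\mathcal R}^a_{\mathcal Q,\mathcal E}$.

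Commutativity of the individual squares (other than the right-most column) is inherited from \lemmaref{L:DSV15-explained} by restriction. Concretely, the involution $\tau$ acts on the entire non-Eckardt diagram \eqref{E:CubicCoDiffDiAll} applied to $(X,\ell')$, since both $X$ and $\ell'$ are $\tau$-stable, and the restriction of any $\tau$-equivariant commutative square to $\tau$-invariants remains commutative. The identifications of $(R_Q^5|x_0)^\vee$, $H^0(C, \Omega_C(2))$, and $R_Q^4$ as the relevant sub-objects of $(R_Q^5)^\vee$, $H^0(D, \Omega_D(2))$, and $R_Q^4$ are then automatic; note that $Q = x_0 g$ depends only on $x_0, x_1, x_2$, so each graded piece $R_Q^i$ carries a trivial $\tau$-action.

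The only remaining nontrivial computation, and the main technical point of the argument, is the identification $K_{\mathcal E} \cong J_Q^4 \cap x_0 \mathbb C[x_0,x_1,x_2]_3 \cong \mathbb C^2$ together with the surjectivity of the right-most vertical map $\mathbb C^3 \twoheadrightarrow \mathbb C^2$. Choose coordinates as in \eqref{eqn:X'} with $l = x_0$, so that $F = k x_3^2 + 2q x_3 + c + x_0 x_4^2$ and $Q = x_0(kc - q^2) = x_0 g$, with $\ell' = (x_0 = x_1 = x_2 = 0)$. By \lemmaref{L:DSV15}, the induced map sends $\sum_{i=0}^4 a_i\, \partial F/\partial x_i$ to $\sum_{i=0}^2 a_i\, \partial Q/\partial x_i$. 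The kernel in the bottom-right corner is parameterized by $(0, a_1, a_2, a_3, 0)$ (using $a_4 = 0$ for $\tau$-invariance and $l(a_0,\ldots,a_3) = a_0 = 0$ from \eqref{E:CokerEckCub}), and its image
\begin{equation*}
a_1\, \partial Q/\partial x_1 + a_2\, \partial Q/\partial x_2 \;=\; x_0\bigl( a_1\, \partial g/\partial x_1 + a_2\, \partial g/\partial x_2 \bigr)
\end{equation*}
plainly lies in $J_Q^4 \cap x_0 \mathbb C[x_0,x_1,x_2]_3$. Conversely, a general element $a_0(g + x_0\, \partial g/\partial x_0) + x_0(a_1\, \partial g/\partial x_1 + a_2\, \partial g/\partial x_2)$ of $J_Q^4$ is divisible by $x_0$ iff $a_0 g \equiv 0 \pmod{x_0}$; as $L$ meets $C$ transversely by \propositionref{prop:projcone}, $g|_{x_0 = 0}$ is nonzero, which forces $a_0 = 0$ and identifies $J_Q^4 \cap x_0 \mathbb C[x_0,x_1,x_2]_3$ with the span of $x_0\, \partial g/\partial x_1$ and $x_0\, \partial g/\partial x_2$. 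Smoothness of $C$ forces these two polynomials to be linearly independent in $\mathbb C[x_0,x_1,x_2]_4$ (otherwise $g$ would be a polynomial in fewer variables, contradicting smoothness), yielding the claimed $\mathbb C^2$; the kernel of the induced surjection $\mathbb C^3 \twoheadrightarrow \mathbb C^2$ is the $a_3$-direction, corresponding geometrically to the $1$-parameter family of lines through $p$ sweeping out the cone $X \cap T_p X$.
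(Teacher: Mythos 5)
Your proof is correct and follows essentially the same route as the paper: both reduce the Eckardt diagram to the non-Eckardt diagram of \lemmaref{L:DSV15-explained} by passing to $\tau$-invariants, and both pin down the right-hand column via the explicit Jacobian-ideal computation with $Q = x_0 g$. The paper makes the ``restriction to $\tau$-invariants'' step precise by introducing the auxiliary moduli spaces $\mathcal A_5^\tau$ and $\overline{\mathcal R}_6^\tau$ together with two explicit diagrams of tangent-space identifications (e.g.\ $T_{\pi_C}\overline{\mathcal R}^a_{3,4}\to (T_{\pi_D}\overline{\mathcal R}^a_6)^\tau$), which is exactly what your equivariance remark compresses, while your detailed verification that the induced map $\mathbb C^3\to\mathbb C^2$ is the expected surjection is the content the paper defers to \corollaryref{C:DSV15}.
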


\begin{proof}
For this we consider the following commutative diagram.
\begin{equation}
\xymatrix@R=.1em{
&& &\overline{\mathcal R}^a_{\mathcal Q} \ar@{-}[dd] \ar@{^(->}[rrrdd]& && \\
&& && && \\
\overline{\mathcal R}^a_{\mathcal Q,\mathcal E} \ar@{^(->}[rr] \ar@{->>}[ddd] \ar@{^(->}[rrruu]&&\overline{\mathcal R}^a_{3,4}\ar@{^(->}[rr] \ar@{->>}[ddd] &\ar@{->>}[d] & \overline {\mathcal R}_6^\tau \ar@{^(->}[rr] \ar@{->>}[ddd]&& \overline {\mathcal R}^a_6 \ar@{->>}[ddd]\\
&& &\mathcal C \ar@{^(->}[rrrdd]& &&\\
&& && && \\
\mathcal C_{\mathcal E}\ar[rr] \ar@{^(->}[rrruu] \ar@/_1pc/[rrrr]_{\ }&&\mathcal A^{(1,2,2,2)}_4 &&\mathcal A_{5}^\tau \ar[ll] \ar@{^(->}[rr]&& \mathcal A_5
}
\end{equation}
Here $\mathcal A_5^\tau$ is the moduli of principally polarized abelian varieties admitting an involution $\tau$ so that the invariant part has dimension $1$ and the anti-invariant part has dimension $4$.  By \cite[Cor.~ 5.4]{Rod_avgroup}, $\calA_5^\tau$ is an irreducible subvariety of $\calA_5$ of dimension $11$.  The map $\mathcal A^\tau_5\to \mathcal A_4^{(1,2,2,2)}$ is the map taking a principally polarized abelian variety to the dual of the anti-invariant part.  
We set $\overline{\mathcal R}_6^\tau:=\mathcal A_5^\tau\times_{\mathcal A_5}\overline{\mathcal R}_6^a$ to be the space of allowable covers admitting an involution $\tau$ so that the invariant part of the Prym variety  has dimension $1$ and the anti-invariant part has dimension $4$.   The map $\overline{\mathcal R}_{3,4}^a \to \overline {\mathcal R}_6^a$ is the 
 map that attaches a $\mathbb P^1$ at the branch points of the base curve, and takes the branched cover of this $\mathbb P^1$ at the attaching points and glues this to the cover curve at the branch points.   Such a cover has the extra involution $\tau$ by taking $\iota$ on the cover of the plane quartic, and the identity on the other component of the cover.  Thus the map just defined has image contained in $\overline{\mathcal R}^\tau_6$ as indicated in the diagram.   The map $\overline{\mathcal R}_{\mathcal Q,\mathcal E}\to \overline{\mathcal R}_{\mathcal Q}^a$ is given by attaching a line to the plane quartic at the marked points, and then taking the cover as indicated above.  
 
To analyze the diagram, we describe a few more differentials.
In the bottom row, given $(X,p)\in \mathcal C_{\mathcal E}$,  then we have
$$
\xymatrix@R=1em{
T_{(JX^{-\tau})^\vee} \mathcal A_4^{(1,2,2,2)} \ar@{=}[d] &T_{JX}\mathcal A_5^{\tau }=(T_{JX}\mathcal A_5)^\tau  \ar[l] \ar@^{->}[r] \ar@{=}[d]&T_{JX}\mathcal A_5 \ar@{=}[d]\\
 \operatorname{SymHom}((R^1_F)^{-\tau},(R^4_F)^{-\tau}) \ar@{=}[d]& \operatorname{SymHom}^\tau(R^1_F,R^4_F) \ar[l] \ar@^{->}[r] \ar@{=}[d]&\operatorname{SymHom}(R^1_F,R^4_F) \ar@{=}[d] \\
 (J^2_F\cap \mathbb C[x_0,\dots,x_3]_2)^\vee &((J^2_F)^\vee)^\tau  \ar[l] \ar@^{->}[r] & (J^2_F)^\vee
}
$$
where the map on the left is the one induced by the decomposition $R^i_F=(R^i_F)^\tau\oplus (R^i_F)^{-\tau}$. Note that on the left, we are also choosing coordinates for $F$ as in \S \ref{S:diffEck}.

In the top row, given $\pi_C:\widetilde C\to C$ in $\overline{\mathcal R}_{\mathcal Q,\mathcal E}^a$, with associated cover $\pi_D:\widetilde D\to D$ in $\overline{\mathcal R}^\tau_a$, then we have 
$$
\xymatrix@R=1em{
T_{\pi_C} \overline{\mathcal R}_{3,4}^a \ar[r] \ar@{=}[d] &T_{\pi_D}  \overline{\mathcal R}_6^\tau = (T_{\pi_D}\overline{\mathcal R}_6^a)^\tau \ar@^{->}[r] \ar@{=}[d]&T_{\pi_D}\overline{\mathcal R}_6^a \ar@{=}[d]\\
H^0(\Omega_C\otimes \omega_C(p_1+\cdots+p_4))^\vee  
\ar@{->}[d]_{j_*} \ar[r] & (H^0(\Omega_D\otimes \omega_D)^\vee)^\tau  \ar@^{->}[r] \ar@{->}[d]_{j_*}&H^0(\Omega_D\otimes \omega_D)^\vee \ar@{->}[d]_{j_*} \\
H^0(\omega_C^{\otimes 2}(p_1+\cdots+p_4))^\vee  
\ar@{=}[d] \ar[r] & (H^0(\omega_D^{\otimes 2})^\vee)^\tau  \ar@^{->}[r] \ar@{=}[d]&H^0(\omega_D^{\otimes 2})^\vee \ar@{=}[d] \\
 H^0(\mathcal O_C(3))^\vee \ar[r] &(H^0(\mathcal O_D(4))^\vee)^\tau  \ar@^{->}[r] & H^0(\mathcal O_D(4))^\vee
}
$$

Now tracing through the diagram and the given differentials, one obtains the stated result.
\end{proof}

Next we use Lemmas \ref{L:DSV15-explained} and \ref{L:DSV15},  and \propositionref{P:stablequartic} to give the following corollary:

\begin{corollary}\label{C:DSV15}
Let $(X,p)\subset \mathbb P\mathbb C^5 \cong \mathbb P^4$ be a smooth Eckardt cubic threefold, and let $\ell\subset X$ be a line through the Eckardt point $p$ giving rise to 
an odd pseudo-double cover $\pi:\widetilde C\cup \widetilde L\to C\cup L$ of a plane quintic $(Q=0)=C\cup L$ obtained as the union of a plane quartic $C$ and a transverse line $L$  in $\mathbb P\mathbb C^3 \cong \mathbb P^2$  (cf.~\propositionref{prop:projcone}).
% given respectively by $C=(kc-q^2=0)$ and $L=(x_0=0)$ (note that $l=x_0$).
%$Q=(kc-q^2)x_0=0$ (N.B.~ $l=x_0$).  
Take coordinates on $\mathbb C^5$ as in \eqref{eqn:X'} so that $\ell=(x_0=x_1=x_2=0)$ and $X$ is defined by the equation $(F:=k x_3^2+2qx_3+c+lx_4^2=0)$.  Then $Q=(kc-q^2)l$, $C=(kc-q^2=0)$,  $L=(l=0)$ (see \eqref{eqn:CL}), and 
projection from $\ell$ gives the map  $\pi_\ell:\mathbb P\mathbb C^5\dashrightarrow  \mathbb P\mathbb C^3$, $[x_0,x_1,x_2,x_3,x_4]\mapsto [x_0,x_1,x_2]$. 
 
Under the identifications
$$
\xymatrix@R=.1em{
\mathbb C^3 \ar[r]^\sim& J^4_Q& \mathbb C^5\ar[r]^\sim& J^2_F\\
(a_0,a_1,a_2)\ar@{|->}[r]& \sum_{i=0}^2 a_i\frac{\partial Q}{\partial x_i}& (a_0,a_1,a_2,a_3,a_4)\ar@{|->}[r] &\sum_{i=0}^4 a_i\frac{\partial F}{\partial x_i}
} 
$$
the projectivization of the morphism $J^2_F\to J^4_Q$ of \eqref{E:CubicCoDiffDi} is the rational map $\pi_\ell$, and the projectivization of the dual map $(J^4_Q)^\vee\to (J^2_F)^\vee$ is therefore the inclusion $(\mathbb P^2_\ell)^\vee \subset (\mathbb P^4)^\vee$ corresponding to hyperplanes containing $\ell$ (cf.~\lemmaref{L:DSV15}).    

The map $J^2_F\cap \mathbb C[x_0,\dots,x_3]_2\to J^4_Q\cap l\mathbb C[x_0,x_1,x_2]_3$ in 
\eqref{E:EckCubicCoDiffDiAll} (note that in \eqref{E:EckCubicCoDiffDiAll} we have further simplified the notation by taking coordinates with $l=x_0$) is the restriction of the map $J^2_F\to J^4_Q$ defined above. Therefore, the projectivization of the dual map is the inclusion of the subset of $(\mathbb P^2_\ell)^\vee$ corresponding to hyperplanes $\sum_{i=0}^2 a_ix_i=0$ containing $\ell$ (and hence containing the Eckardt point $p \in \ell$) and such that $l(a_0,a_1,a_2)=0$ (compare \eqref{E:CokerQuarLine} and see also \propositionref{P:stablequartic}) into the subset of $(\mathbb P^4)^\vee$ parameterzing hyperplanes $\sum_{i=0}^3 a_ix_i=0$ containing the Eckardt point $p$ and satisfying $l(a_0,a_1,a_2)=0$ (compare \eqref{E:CokerEckCub}).
%takes $L^\vee\to H^\vee \subset (\mathbb P^4)^\vee$ is the inclusion corresponding to hyperplanes $\sum_{i=0}^4 a_ix_i=0$ containing $\ell$ and  such that $l(a_0,\dots,a_4)=0$.  
\end{corollary}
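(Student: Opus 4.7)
The plan is to combine \lemmaref{L:DSV15} with \propositionref{P:stablequartic} and a short product-rule computation. First, the description of $J^2_F\to J^4_Q$ as the projection $\pi_\ell$ from $\ell'$, together with its projective dual $(\mathbb{P}^2_\ell)^\vee\subset (\mathbb{P}^4)^\vee$, follows from applying \lemmaref{L:DSV15} essentially verbatim: the pair $(X,\ell')$ consists of a smooth cubic threefold equipped with a line, giving rise to the odd pseudo-double cover $\pi:\widetilde C\cup \widetilde L\to C\cup L$ of the nodal plane quintic $D=(Q=0)$ by \propositionref{prop:projcone}, and the coordinates of \eqref{eqn:X'} place $\ell'=(x_0=x_1=x_2=0)$ exactly in the form required. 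Nothing in \lemmaref{L:DSV15} used smoothness of $D$, so nothing new needs to be done for this first assertion.

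For the Eckardt refinement, I would invoke \propositionref{P:stablequartic}, which already asserts that the rightmost vertical arrow in \eqref{E:EckCubicCoDiffDiAll} is induced from the ambient map $J^2_F\to J^4_Q$. What remains is to verify that this ambient map, restricted to $J^2_F\cap \mathbb{C}[x_0,\dots,x_3]_2$, genuinely lands in $J^4_Q\cap l\mathbb{C}[x_0,x_1,x_2]_3$. By \eqref{E:CokerEckCub} (re-indexed for the coordinates of \eqref{eqn:X'}, in which $l=l(x_0,x_1,x_2)$), an invariant polar quadric $\sum_{i=0}^4 a_i\,\partial F/\partial x_i$ has $a_4=0$ and $l(a_0,a_1,a_2)=0$; using $Q=(kc-q^2)\,l$, the product rule gives
\[
\sum_{i=0}^2 a_i\,\frac{\partial Q}{\partial x_i}=l\sum_{i=0}^2 a_i\,\frac{\partial(kc-q^2)}{\partial x_i}+(kc-q^2)\cdot l(a_0,a_1,a_2),
\]
and the second summand vanishes by the invariance constraint, leaving a cubic divisible by $l$, as required.

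Finally, the projective interpretation of the dual is a formal consequence of the preceding step. Dualizing the restricted map yields an inclusion $(J^4_Q\cap l\mathbb{C}[x_0,x_1,x_2]_3)^\vee \hookrightarrow (J^2_F\cap \mathbb{C}[x_0,\dots,x_3]_2)^\vee$, which under the natural identifications of \lemmaref{L:DSV15} corresponds to restricting the inclusion $(\mathbb{P}^2_\ell)^\vee\subset (\mathbb{P}^4)^\vee$ to the subsets cut out by the linear conditions of \eqref{E:CokerQuarLine} and \eqref{E:CokerEckCub}; these give precisely the hyperplanes containing $\ell'$ (resp.\ containing $p$), supplemented by $l(a_0,a_1,a_2)=0$, as in the statement. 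I do not anticipate any real obstacle: the only step with content is the product-rule cancellation displayed above, which makes transparent why the invariance condition $l(a_0,a_1,a_2)=0$ defining the invariant polars is exactly what forces the image into the Jacobian ideal in the form $l\cdot(\text{cubic})$.
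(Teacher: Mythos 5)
Your proposal is correct and follows essentially the same route as the paper, whose proof simply cites \lemmaref{L:DSV15-explained}, \lemmaref{L:DSV15}, and \propositionref{P:stablequartic}. Your explicit product-rule computation showing that the condition $l(a_0,a_1,a_2)=0$ forces $\sum_{i=0}^2 a_i\,\partial Q/\partial x_i$ into $l\cdot\mathbb{C}[x_0,x_1,x_2]_3$ is a valid and welcome elaboration of what the paper leaves implicit.
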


\begin{proof}
 This follows immediately from Lemmas \ref{L:DSV15-explained} and \ref{L:DSV15},  and \propositionref{P:stablequartic}.
 \end{proof}

\subsection{Proof of \theoremref{thm:prymfiber}}

\begin{proof}[Proof of \theoremref{thm:prymfiber}]
We apply the strategy of \eqref{E:FultonSegre} and \lemmaref{L:DS-fiber} to the diagram
\eqref{E:EckFiberMainDi}, 
where the image of $\mathcal C_{\mathcal E}$ plays the role of $Z$, and the image of $\overline {\mathcal R}_{\mathcal Q, \mathcal E}^a$ plays the role of $Z_0'$.

\corollaryref{C:DSV15} implies that the differential of the Prym map  $\overline{\mathcal R}^a_{3,4}\to \mathcal A_4^{(1,2,2,2)}$ at a cover $(\pi:\widetilde C\to C)\in \overline {\mathcal R}_{\mathcal Q,\mathcal E}^a$ has kernel exactly of dimension $1$.  Thus at $\pi$, the fiber of $\mathcal P_{3,4}$ over $P(\widetilde C,C)=(JX^{-\tau})^\vee$ is exactly $E$, the curve in the Fano surface of lines $F(X)$ consisting of lines through the Eckardt point $p$.  Since this holds at every cover $\pi$, this implies that the image of $\overline {\mathcal R}_{\mathcal Q,\mathcal E}^a$ is an irreducible and connected component of the fiber over the image of $\mathcal C_{\mathcal E}$, with generic fiber of the image of $\overline {\mathcal R}_{\mathcal Q,\mathcal E}^a$ over the image of  $\mathcal C_{\mathcal E}$ connected. 

  At the same time, let us use  \corollaryref{C:DSV15} to compute the degree of the map of exceptional divisors $\widetilde{\overline{\mathcal R}_{\mathcal Q,\mathcal E}^a}\to \widetilde {\mathcal C}_{\mathcal E}$ after blowing up along the corresponding loci (see \S \ref{S:DS-method}).  For this, we can pick a general Eckardt cubic threefold $(X,p)$, and a general point in the projectivization of the fiber of the normal bundle of the image at the image of $(X,p)$, corresponding by \corollaryref{C:DSV15} to a general hyperplane $H\subset \mathbb P^4$ passing through the Eckardt point.  The lemma says that the points lying above this chosen point correspond to those lines  $\ell'\subset X$ that pass through the Eckardt point $p$ and lie in the hyperplane section $H$.  Since the lines through $p$ on $X$ are parameterized by $E\subset S\subset X$, and $E\cap H$ consists of three points, there are three such lines.

Since the degree of the Prym map is $3$, we can conclude from \lemmaref{L:DS-fiber} that 
$\overline {\mathcal R}_{\mathcal Q,\mathcal E}^a=\mathcal C_{\mathcal E}\times_{\mathcal A_4^{(1,2,2,2)}}\overline{\mathcal R}_{3,4}^a$, and we are done.
\end{proof}

\begin{proof}[Proof of \corollaryref{C:DSdegree}]
This now follows immediately from \eqref{E:degComp}, and the proof of \theoremref{thm:DSprymfiber}.  
\end{proof}

\section{Global Torelli theorem for Eckardt cubic threefolds}\label{S:GlobaTor}

Applying \theoremref{thm:prymfiber}, we prove the global Torelli theorem for the period map $\calP: \calM \rightarrow \calA_4^{(1,1,1,2)}$ of cubic surface pairs defined in Section \ref{sec:eckardtcubic3}. 

\begin{theorem}[Global Torelli] \label{thm:cubic2pairtorelli}
The period map $\calP: \calM \rightarrow \calA_4^{(1,1,1,2)}$ for cubic surface pairs is injective.
\end{theorem}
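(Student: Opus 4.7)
The plan is to derive the global Torelli theorem directly from the fiber description in \theoremref{thm:prymfiber} combined with the reconstruction result in \theoremref{T:Eck-from-quart}. Suppose $(S_1,\Pi_1), (S_2,\Pi_2)\in \calM$ have $\mathcal{P}(S_1,\Pi_1)\cong \mathcal{P}(S_2,\Pi_2)$, i.e.\ there is an isomorphism $JX_1^{-\tau_1}\cong JX_2^{-\tau_2}$ of polarized abelian varieties of type $(1,1,1,2)$. Dualizing (see \remref{R:dualPol}), one obtains an isomorphism $(JX_1^{-\tau_1})^\vee \cong (JX_2^{-\tau_2})^\vee$ in $\calA_4^{(1,2,2,2)}$. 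My aim is to upgrade this to a projective equivalence of the underlying Eckardt cubic threefolds, which, via the correspondence recalled in \S\ref{S:Eck=CSpair}, forces $(S_1,\Pi_1)\cong (S_2,\Pi_2)$.

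First I would use \theoremref{thm:prymfiber}: the fibers $\mathcal{P}_{3,4}^{-1}\bigl((JX_i^{-\tau_i})^\vee\bigr)\cong E_i\subset F(X_i)$ agree as subsets of $\overline{\calR}_{3,4}^a$ under the identification of their base points. Next, choose a general line $\ell'_1\subset X_1$ through $p_1$ that does not meet any line of $S_1$ (this is possible by \lemmaref{lem:invline}, since such lines form a dense open subset of $E_1$). By \propositionref{prop:projcone}, projection from $\ell'_1$ yields a pseudo-double cover $\widetilde D_1\to D_1$ whose restriction to the smooth quartic component $C_1\subset D_1$ is a branched allowable cover $\pi_1\colon \widetilde C_1\to C_1$ representing a point of $\overline{\calR}_{3,4}^a$. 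Because the two fibers coincide, there exists $\ell'_2\subset X_2$ through $p_2$ with associated cover $\pi_2\colon \widetilde C_2\to C_2$ isomorphic to $\pi_1$.

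By \lemmaref{L:kappaCEck}, an isomorphism of the abstract branched double covers $\pi_1\cong \pi_2$ is equivalent (via the canonical embedding of a smooth genus $3$ curve as a plane quartic, and the fact that four collinear points determine their line) to a projective equivalence of the triples $(C_1,\kappa_{C_1},L_1)\cong (C_2,\kappa_{C_2},L_2)$. Applying \theoremref{T:Eck-from-quart} then lifts this to a projective equivalence of triples
\[
(X_1\subset \mathbb{P}^4,\,p_1,\,\ell'_1)\;\cong\;(X_2\subset \mathbb{P}^4,\,p_2,\,\ell'_2),
\]
in particular yielding an isomorphism of Eckardt cubic threefolds $(X_1,p_1)\cong (X_2,p_2)$. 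Via the equivalence of \S\ref{S:Eck=CSpair}, this produces the desired isomorphism of cubic surface pairs $(S_1,\Pi_1)\cong (S_2,\Pi_2)$.

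The argument is essentially an exercise in chaining the two main theorems together, so the proof is short. The one point requiring a little care is the passage from the isomorphism of polarized abelian varieties $JX_1^{-\tau_1}\cong JX_2^{-\tau_2}$ to an actual identification of isomorphism classes of covers in $\overline{\calR}_{3,4}^a$; this is where the dualization into $\calA_4^{(1,2,2,2)}$ and the interpretation of $\mathcal{P}_{3,4}^{-1}$ as a set-theoretic fiber over an isomorphism class of polarized abelian varieties is needed. Once that is in place, the conclusion is immediate from \theoremref{thm:prymfiber} and \theoremref{T:Eck-from-quart}.
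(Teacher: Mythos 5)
Your proposal is correct and follows essentially the same route as the paper's own proof: both arguments dualize the isomorphism $JX_1^{-\tau_1}\cong JX_2^{-\tau_2}$ into $\calA_4^{(1,2,2,2)}$, invoke \theoremref{thm:prymfiber} to conclude that a cover $\widetilde C\to C$ obtained by projecting one Eckardt cubic from a line through its Eckardt point also arises from a line through the Eckardt point of the other, and then apply \theoremref{T:Eck-from-quart} (together with \lemmaref{L:kappaCEck} and the equivalence of \S\ref{S:Eck=CSpair}) to reconstruct the triple $(X,p,\ell')$ and hence the cubic surface pair. The only cosmetic difference is that you phrase the matching of covers via equality of fibers over the common point of $\calA_4^{(1,2,2,2)}$, whereas the paper exhibits a second line $\ell_0''$ on $X_0$ realizing the cover coming from $X_1$; these are the same step.
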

\begin{proof}
Using the isomorphism $\mathcal M\cong \mathcal C_{\mathcal E}$, we prove that the period map $\mathcal C_{\mathcal E}\to \mathcal A_4^{(1,1,1,2)}$ is injective.  
In other words, let $(X_0, p_0)$ and $(X_1, p_1)$ be Eckardt cubic threefolds coming from cubic surface pairs $(S_i, \Pi_i)$. Denoting the involution associated with $p_i$ by $\tau_i$ for $i=0,1$, we will  prove that if the anti-invariant parts are isomorphic to each other $JX_0^{-\tau_0} \cong JX_1^{-\tau_1}$, then $(X_0,p_0)$ is isomorphic to $(X_1,p_1)$.

 First recall that from \theoremref{T:conedecomp}, for $i=0,1$, there exist a $\tau_i$-invariant line $\ell'_i \subset X_i$ through $p_i$, such that when projecting $X_i$ from $\ell'_i$ we get   a covering  $\pi_i:\widetilde{C}_i \rightarrow C_i$   in $\calR_{3,4}$ such that $P(\widetilde C_i,C_i)\cong (JX_i^{-\tau_i})^\vee$ as polarized abelian varieties.   More precisely,  the double cover $\widetilde{C}_i \rightarrow C_i$  is obtained by projecting $X_i$ from $\ell'_i$ and restricting the discriminant pseudo-double cover $\pi_i: \widetilde{C}_i \cup E_i \rightarrow C_i \cup L_i$   to the smooth quartic component $C_i$.  
Recall that $\widetilde{C}_i \rightarrow C_i$ ($i=0,1$)  and $E_i\to L_i$ are branched at the four intersection points $C_i \cap L_i$, and therefore the branched double cover $\widetilde{C}_i \rightarrow C_i$ determines the entire discriminant pseudo-double cover.   Recall also that the cover $\pi_i:\widetilde C_i\to C_i$ is determined by an odd theta characteristic  (i.e., bi-tangent) $\kappa_{C_i}$ on $C_i$ (\lemmaref{L:kappaCEck}), and that this theta characteristic also determines the cubic surface $S_i$ associated to $(X_i,p_i)$  (see \eqref{E:kappCEck} and \S \ref{S:CubSurfPoint}).  The plane $\Pi_i$ is  determined by $L_i$; it is the pre-image of $L_i$ under the projection of $\mathbb P^3=(x_4=0)$ from $p_i'=\ell_i'\cap S_i'$.    In particular, the data   $(X_i,p_i,\ell_i)$, up to projective linear transformations, is equivalent to the data  $(C_i,\kappa_{C_i},L_i)$ (see \theoremref{T:Eck-from-quart}).

Now since $P(\widetilde C_1,C_1)\cong P(\widetilde C_0,C_0)$, we have from \theoremref{thm:prymfiber} that there exists 
a $\tau_0$-invariant line $\ell''_0 \subset X_0$ through $p_0$, such that when projecting $X_0$ from $\ell''_0$ we get  the  covering  $\pi_1:\widetilde{C}_1 \rightarrow C_1$.  In fact, as described above, projecting from $\ell''_0$, we get the full discriminant $\widetilde C_1\cup E_1\rightarrow C_1\cup L_1$.  
But then the triple $(X_1,p_1,\ell_1')$ is projectively equivalent to $(X_0,p_0,\ell_0'')$.  In other words, $(X_1,p_1)\cong (X_0,p_0)$.

\end{proof}

\section{Eckardt cubic threefolds as fibrations in conics 2: point-wise invariant lines} \label{sec:proj27}

Let $X$ be a cubic threefold with an Eckardt point $p$ (see Equation (\ref{eqn:eckardt})). Denote the associated involution in (\ref{eqn:tau}) by $\tau$. In Section \ref{sec:proj27}, we study the $\tau$-decomposition of $JX$ (under a genericity assumption; see \cndref{cond:generic})
  via the linear projection of $X$ from a line point-wise fixed by $\tau$ (i.e., from one of the $27$ lines on the cubic surface $S\subset X$). As an application, we prove that the period map $\calP$ for cubic surface pairs (defined in Section \ref{sec:eckardtcubic3}) is generically finite onto its image. 
 As the results in this section are not used in the proofs of our main theorems, and the proofs of the result in this section are similar to those is \S\ref{sec:projcone}, we give more brief treatment of the proofs in this section.

\subsection{Projecting Eckardt cubic threefolds from point-wise invariant lines} We now revisit \S \ref{S:CubSolid} in the case of an Eckardt cubic threefold and a point-wise fixed line.  
More precisely, let $(X,p)$ be an Eckardt cubic threefold and let $\ell \subset S$ be a line contained in the associated cubic surface \eqref{E:Sdef}.
Let us choose coordinates so that  $X$ is given by Equation (\ref{eqn:eckardt}) 
$f(x_0, \dots, x_3) + l(x_0,\dots,x_3)x_4^2 = 0$, 
with the Eckardt point $$p=[0,0,0,0,1].$$
  Let $L_1,L_2,L_3$ be linear forms on $\mathbb P^4$ with $\ell=(L_1=L_2=L_3=0)$.  Since $\ell\subset S=X\cap (x_4=0)=0$, we can assume that $L_1=x_4$.  As a consequence, we can assume that $L_2,L_3$ do not include $x_4$.  Therefore, by a change of coordinates in $x_0,x_1,x_2,x_3$, we can assume that 
 the line $\ell$ is cut out by $x_2=x_3=x_4=0$:
 $$
 \ell=(x_2=x_3=x_4=0).
 $$
Because $\ell$ is not contained in the cone $X \cap T_pX$, the linear polynomial $l$ in Equation (\ref{eqn:eckardt}) contains either $x_0$ or $x_1$ (note that $T_pX=(l=0)$). Interchanging $x_0$ and $x_1$, we assume it is $x_0$. After a change of coordinates (namely, $x_0 \mapsto l$ and $x_i \mapsto x_i$ for $1 \leq i \leq 4$), the Eckardt cubic threefold $X$ is given by 
\begin{equation} \label{eqn:eckst}
f(x_0, \dots, x_3) + x_0x_4^2 = 0.
\end{equation}
Then we have 
$$
S=(f(x_0,\dots,x_3)=0)\subset \mathbb P^3_{x_0,x_1,x_2,x_3}
$$
and the hyperplane section is
\begin{equation}
E=(x_0=0)\cap S=(x_0=f=0).
\end{equation}

Now we project $X$ from the line $\ell$ to a complementary plane $\bP^2_{x_2,x_3,x_4}=(x_0=x_1=0)$ and obtain a conic bundle $\pi_\ell: \operatorname{Bl}_\ell X \rightarrow \bP^2_{x_2,x_3,x_4}$.  Following \eqref{E:CubSolid}, we write  the equation of $X$ as
\begin{equation} \label{eqn:X}
l_1(x_2,x_3)x_0^2+2l_2(x_2,x_3)x_0x_1+l_3(x_2,x_3)x_1^2+\end{equation}
$$
2(q_1(x_2,x_3)+\frac12x_4^2)x_0+2q_2(x_2,x_3)x_1+c(x_2,x_3)=0
$$
where $l_i$, $q_j$ and $c$ are homogeneous polynomials in $x_2,x_3$ of degree $1$, $2$ and $3$ respectively. (In \eqref{E:CubSolid} these were polynomials in $x_2,x_3,x_4$, but here we have $x_4$ appearing in (\ref{eqn:eckst}) only in the monomial $x_0x_4^2$ and so we have modified the notation slightly to reflect this.) 

We now consider the associated matrix \eqref{E:Msolid}:
\begin{equation} \label{eqn:A}
M=\begin{pmatrix}  
l_1 &l_2 &q_1+\frac12x_4^2\\
l_2&l_3&q_2\\
q_1+\frac12x_4^2&q_2&c\\
\end{pmatrix}.
\end{equation}

The equation of the discriminant $D\subset \mathbb P^2$ is \eqref{E:Deqsolid}: 
\begin{equation} \label{eqn:D}
\det(M) = -\frac14l_3 x_4^4 +\det  \begin{pmatrix} l_2&q_1\\l_3&q_2\end{pmatrix}x_4^2 + \det \begin{pmatrix}  l_1 &l_2 &q_1\\
l_2&l_3&q_2\\
q_1&q_2&c\\
\end{pmatrix} = 0.
\end{equation}

Note that in our coordinates, the involution $\tau$ on $X$ is induced by an involution $\tau$ on $\mathbb P^4$, namely $x_4\mapsto -x_4$.  This induces an involution $\tau$ on $\mathbb P^2_{x_2,x_3,x_4}$ given also by $x_4\mapsto -x_4$.  Since $\ell$ is fixed by $\tau$, there is an induced involution on $\operatorname{Bl}_\ell X$, and from the definitions it is clear that $\pi_\ell:\operatorname{Bl}_\ell X\to \mathbb P^2_{x_2,x_3,x_4}$ is equivariant with respect to the involution $\tau$.  Restricting to the discriminant $D$, we see that $\tau$ induces  involutions
\begin{equation}\label{E:sigmaDef}
\sigma:\widetilde D\to \widetilde D \ \ \text{ and } \ \ \ \sigma_D:D\to D
\end{equation}
 making the cover $\widetilde D\to D$ equivariant.  

\begin{rem}
 Fiberwise, we can describe the involutions $\sigma$ and $\sigma_D$ as follows.  The involution  $\tau$ on $X$ sends a degenerate fiber $\ell \cup m \cup m'$  to another degenerate fiber $\ell \cup \tau(m) \cup \tau(m')$; this then defines the action of $\sigma$ on $\widetilde D$. Let $\iota: \widetilde{D} \rightarrow \widetilde{D}$ be the covering involution associated with $\widetilde{D} \rightarrow D$. From the previous geometric description of $\sigma$, we deduce that $\sigma \iota = \iota \sigma$. Then $\sigma$ induces an involution on $D$ and one verifies easily that this involution coincides with $\sigma_D$.
\end{rem}

\begin{lemma} \label{lem:Dbar}
If the discriminant curve $D$ is smooth, then the quotient curve $\overline{D} := D/\sigma_D$ is smooth of genus $2$. 
\end{lemma}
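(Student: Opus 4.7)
My plan is to apply the Riemann--Hurwitz formula to the double cover $\pi\colon D \to \overline{D}$ induced by $\sigma_D$, after counting the fixed points of $\sigma_D$ on $D$. The smoothness of $\overline{D}$ is automatic over $\mathbb{C}$, since it is the quotient of a smooth curve by a finite group action.

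First I would observe from (\ref{eqn:D}) that $\det(M)$ involves only even powers of $x_4$, confirming $\sigma_D$-invariance of $D$. The fixed locus of $\sigma_D$ on $\mathbb{P}^2_{x_2,x_3,x_4}$ is the disjoint union of the line $\ell_0 = (x_4 = 0)$ and the isolated point $q = [0,0,1]$, so the fixed locus of $\sigma_D|_D$ equals $(D \cap \ell_0) \cup (D \cap \{q\})$. Setting $x_4 = 0$ in (\ref{eqn:D}) recovers the degree-$5$ determinant of the matrix $M_S$ of (\ref{E:Msurf}), namely the discriminant of the projection of the smooth cubic surface $S$ from the line $\ell$, so $D \cap \ell_0$ is a subscheme of length $5$. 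Since every entry of $M$ other than $q_1 + \tfrac{1}{2}x_4^2$ is a form in $l_i, q_j, c$ which is homogeneous of positive degree in $x_2, x_3$, the matrix $M$ has rank $1$ at $q$, so $q \in D$.

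The key technical step is to show that these $6$ fixed points are all distinct. Clearly $q \not\in \ell_0$, so it suffices to show that $\ell_0$ meets $D$ transversely at each of the $5$ points of $D \cap \ell_0$. Suppose to the contrary that $p = [p_2, p_3, 0] \in D \cap \ell_0$ is a non-transverse intersection; passing to an affine chart around $p$ with local coordinates $u, v$ where $v = x_4$, the equation of $D$ has the form $A(u) + B(u) v^2 + C(u) v^4 = 0$. Non-transversality at $p$ forces $A$ to have a zero of order at least $2$ at the $u$-coordinate of $p$, and then both partial derivatives vanish at $(u, v) = (p, 0)$, contradicting the smoothness of $D$ at $p$.

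With $\sigma_D$ having exactly $6$ fixed points on the smooth plane quintic $D$ (of genus $g(D) = 6$), the Riemann--Hurwitz formula applied to the degree-$2$ cover $\pi\colon D \to \overline{D}$ gives
$$2 \cdot 6 - 2 \;=\; 2\bigl(2 g(\overline{D}) - 2\bigr) + 6,$$
so $g(\overline{D}) = 2$, as claimed. The main obstacle is the transversality argument in the previous paragraph; once that is in place, the genus computation is immediate.
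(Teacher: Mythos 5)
Your argument is correct and is clearly the intended one: the paper leaves the proof to the reader, but \lemmaref{lem:kleinD} confirms that $b_\iota:D\to\overline D$ is a double cover ramified at six points, which is exactly your count of fixed points ($5$ transverse intersections with $(x_4=0)$ plus the isolated fixed point $q=[0,0,1]$), and Riemann--Hurwitz then gives $g(\overline D)=2$. One small correction: at $q$ the matrix $M$ of \eqref{eqn:A} has rank $2$, not rank $1$ (the $(1,3)$ and $(3,1)$ entries equal $\tfrac12$, giving an invertible $2\times 2$ minor); rank $1$ would in fact force $q$ to be a singular point of $D$, contradicting your hypothesis. The conclusion $q\in D$ is unaffected, since all one needs is $\det M(q)=0$, which is immediate because every term of \eqref{eqn:D} is divisible by a positive-degree form in $x_2,x_3$.
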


\begin{proof}
For brevity, the proof is left to the reader.
\end{proof}

\begin{proposition}[Projecting from $\ell\subset S\subset X$] \label{prop:proj27}
Let $(X,p)$ be an Eckardt cubic threefold and let $\ell \subset S$ be a line contained in the associated cubic surface \eqref{E:Sdef}.
The following are equivalent:
\begin{enumerate}
\item  $(X,p)$ and $\ell$ satisfy \cndref{cond:generic} (note that such a line $\ell\subset S$ exists on a general Eckardt cubic threefold; see Remark \ref{R:condGen}).
\item The discriminant plane quintic 
  $D$ is smooth and the double cover $\widetilde{D} \rightarrow D$ is connected and \'etale.  
\end{enumerate}
\end{proposition}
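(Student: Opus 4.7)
The plan is to analyze the singular locus of the plane quintic $D$ defined by \eqref{eqn:D}, invoking the standard fact (see \cite[Prop.~1.2]{Beauville_prymij} and the discussion in \S\ref{S:CubSolid}) that since $\operatorname{Bl}_\ell X$ is smooth, $D$ is at worst nodal, and its singular points correspond precisely to the corank-$2$ locus of the symmetric matrix $M$ in \eqref{eqn:A}. I would partition $\bP^2_{x_2,x_3,x_4}$ into three regions and treat them separately: the line $(x_4 = 0)$, the isolated $\tau$-fixed point $[0,0,1]$, and the complement.

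On the line $(x_4 = 0)$, the restriction $\det(M)|_{x_4=0}$ coincides with the discriminant $D_S$ of the cubic surface conic bundle $S \to \bP^1$ given by projecting $S$ from $\ell$. Since $S$ is a smooth cubic surface containing $\ell$, the classical theory of cubic surfaces (five tritangent planes through $\ell$, each meeting $S$ in $\ell$ plus a pair of distinct intersecting lines) yields $D_S$ as a reduced divisor of five distinct points on $\bP^1$, and a local analysis of the leading terms of $\det(M)$ at each such point produces a smooth point of $D$. At $[0,0,1]$, a direct expansion of $\det(M)$ in the affine chart $x_4 = 1$ shows the lowest-order term to be $-\tfrac{1}{4}l_3(x_2,x_3)$; this is a nonzero linear form, since if $l_3 \equiv 0$ then $[0,1,0,0] \in \ell$ would be a singular point of $S$.

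The main case concerns points $d = [\alpha, \beta, \gamma]$ with $\gamma \neq 0$ and $(\alpha, \beta) \neq (0, 0)$. Here the corank-$2$ condition amounts to the simultaneous vanishing of all $2 \times 2$ minors of $M(d)$. The minors $\det M_{3,3} = l_1 l_3 - l_2^2$ and $\det M_{1,1} = l_3 c - q_2^2$ depend only on $(x_2, x_3)$, so they force the image $(\alpha, \beta) \in \bP^1_{x_2,x_3}$ to lie in $\Supp(Q) \cap \Supp(B)$ for $Q$, $B$ as in \eqref{E:Q1} and \eqref{E:B}. Conversely, given such a point $(\alpha, \beta)$, I would solve the remaining minor equations for $\gamma$. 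In the generic subcase $l_3(\alpha, \beta) \neq 0$, the vanishing of $\det M_{1,3} = l_2 q_2 - l_3(q_1 + \tfrac{1}{2}\gamma^2)$ yields $\gamma^2 = 2(l_2 q_2 - l_3 q_1)/l_3$, and this is nonzero at $(\alpha, \beta)$ because otherwise all $2 \times 2$ minors of $M|_{x_4=0}$ would vanish at $(\alpha, \beta)$, making it a corank-$2$ point of $M|_{x_4=0}$ and contradicting the reducedness of $D_S$. The degenerate subcase $l_3(\alpha, \beta) = 0$ automatically forces $l_2(\alpha,\beta) = q_2(\alpha,\beta) = 0$, and then $\det M_{2,2} = l_1 c - (q_1 + \tfrac{1}{2}\gamma^2)^2 = 0$ gives a quadratic for $\gamma^2$ with a nonzero solution, again using reducedness of $D_S$ to force $l_1 c - q_1^2 \neq 0$ at $(\alpha, \beta)$. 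Combining the three regions yields $D$ smooth $\iff$ \cndref{cond:generic}; the main technical challenge is organizing the minor computation in the third region to treat both subcases uniformly.

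For the remaining assertions in (2), the pseudo-double cover $\widetilde D \to D$ associated to the conic bundle $\operatorname{Bl}_\ell X \to \bP^2$ is étale precisely over the smooth locus of $D$, so smoothness of $D$ forces $\widetilde D \to D$ to be étale. Connectedness then follows from a dimension count using the isomorphism $P(\widetilde D, D) \cong JX$ of \eqref{E:P=IJ}: a disconnected (i.e., trivial) étale double cover of a smooth genus-$6$ curve would have Prym of dimension $g(D) = 6$, incompatible with $\dim JX = 5$.
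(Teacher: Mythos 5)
The paper gives no proof of this proposition (it is ``left to the reader''), but your argument is correct and is exactly the one the surrounding setup intends: since $\operatorname{Bl}_\ell X$ is smooth, singular points of $D$ are precisely the corank-$2$ points of the matrix \eqref{eqn:A}, the two minors depending only on $(x_2,x_3)$ cut out exactly $Q$ and $B$ from \cndref{cond:generic}, and your three-region case analysis --- using reducedness of the surface discriminant $D_S$ both to get smoothness of $D$ along $(x_4=0)$ and to force $\gamma\neq 0$ in the converse direction, and smoothness of $X$ along $\ell$ to get $l_3\not\equiv 0$ at $[0,0,1]$ --- correctly closes both implications. The \'etaleness and connectedness assertions then follow by the standard facts you cite (pseudo-double covers are \'etale over the smooth locus, and the dimension count $\dim P(\widetilde D,D)=5\neq 6$ via \eqref{E:P=IJ} rules out the trivial cover).
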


\begin{proof}
For brevity, this is left to the reader.
\end{proof}

\subsection{Klein group towers of coverings}
We are interested in studying the Prym variety $P(\widetilde D,D)$.  
The key point is that  the covering curve $\widetilde D$ admits two commuting involutions, namely $\sigma$ induced from $\tau$ \eqref{E:sigmaDef}, and $\iota$ induced from the double cover $\widetilde D\to D$.   
It has been clear going back to \cite{Mumford_prym} that one should consider the associated tower of covers \eqref{E:kleinD}  induced by taking quotients of $\widetilde D$ by the various subgroups of  $\langle \sigma,\iota \rangle \subset \operatorname{Aut}(\widetilde D)$.  While Mumford focused on a particular case involving hyperelliptic curves, this general approach, including studying more complicated automorphism groups of the covering curve of a branched double cover, was explored in more depth in  \cite{Donagi_prymfiber},  and  then generalized in \cite{RR_prym1} to include the case we study here.  We explain this in the context of double covers of discriminant curves of Eckardt cubic threefolds.

We introduce the following notation. For any element $g \neq 1$ of the Klein four group $\langle \sigma, \iota \rangle \subset \mathrm{Aut}(\widetilde{D})$, we denote the quotient curve 
 $$
 \widetilde{D}_g:=\widetilde{D}/\langle g\rangle.
 $$
In particular, $\widetilde{D}_\iota=D$.

\begin{lemma} \label{lem:kleinD}
We have the following commutative diagram:
\begin{equation}\label{E:kleinD}
\xymatrix{
& \widetilde{D} \ar[ld]_{a_\sigma} \ar[d]^{a_{\sigma \iota}}_{\text{\'et}} \ar[rd]^{a_\iota}_{\text{\'et}} \\
\widetilde{D}_\sigma \ar[rd]_{b_\sigma}^{\text{\'et}} &
\widetilde{D}_{\sigma \iota} \ar[d]^{b_{\sigma \iota}} &
\widetilde{D}_\iota = D \ar[ld]^{b_\iota} \\
& \overline{D} 
}
\end{equation}
Moreover, 
\begin{enumerate}
\item The map $a_\sigma$ is a double covering map branched at twelve points. The maps $a_{\sigma \iota}$ and $a_\iota$ are both \'etale double covering maps.
\item The map $b_\sigma$ is an \'etale double covering map. Both $b_{\sigma \iota}$ and $b_\iota$ are double covering maps ramified at six points.
\item The curves are all smooth and their genera are given as follows: $g(\widetilde{D}) = 11$, $g(\widetilde{D}_\sigma)=3$, $g(\widetilde{D}_{\sigma \iota})=6$, $g(D)=6$ and $g(\overline{D})=2$. 
\end{enumerate}
\end{lemma}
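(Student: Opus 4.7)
The plan is to analyze the action of the Klein four-group $G = \langle \sigma, \iota \rangle \subset \operatorname{Aut}(\widetilde{D})$, where $\iota$ is the covering involution of the \'etale double cover $\widetilde{D} \to D$ and $\sigma$ is induced by the Eckardt involution $\tau$. Because $\sigma$ arises from an involution of the entire conic bundle $\pi_\ell$ while $\iota$ acts fiberwise by interchanging the two residual lines, the two involutions commute, so $G \cong (\bZ/2)^2$; the three nontrivial quotients $\widetilde{D}_g$ then assemble naturally into the commutative diagram, each mapping two-to-one to $\overline{D} = \widetilde{D}/G$. By \propositionref{prop:proj27} and Riemann--Hurwitz, $g(\widetilde{D}) = 2g(D) - 1 = 11$, and by \lemmaref{lem:Dbar} and Riemann--Hurwitz applied to $b_\iota$, the involution $\sigma_D$ has exactly $6$ fixed points on $D$. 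All remaining assertions will follow once we determine $\#\operatorname{Fix}(\sigma)$ and $\#\operatorname{Fix}(\sigma\iota)$ on $\widetilde{D}$.

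Over each $d \in \operatorname{Fix}(\sigma_D)$ the fiber $\pi^{-1}(d)$ is preserved by $\sigma$ and consists of two points, so $\sigma$ either fixes them both, contributing $2$ to $\#\operatorname{Fix}(\sigma)$, or swaps them, contributing $2$ to $\#\operatorname{Fix}(\sigma\iota)$; and no fixed points arise elsewhere, since $\iota$ is free. The main obstacle is to show that the swapping case never occurs, i.e.\ $\#\operatorname{Fix}(\sigma) = 12$ and $\#\operatorname{Fix}(\sigma\iota) = 0$. For this, I first identify the six fixed points of $\sigma_D$ explicitly. The fixed locus of $\sigma_D: x_4 \mapsto -x_4$ in $\bP^2_{x_2,x_3,x_4}$ is the line $L_\tau = (x_4 = 0)$ together with the isolated point $[0:0:1]$; by B\'ezout, $\#(D \cap L_\tau) = 5$ (transversely, for generic $X$ and $\ell$), and substituting $x_2 = x_3 = 0$ into \eqref{eqn:D} shows $[0:0:1] \in D$ always, giving the sixth point. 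For any $d \in D \cap L_\tau$, the plane $\langle \ell, d \rangle \subset \bP^4$ lies inside the $\tau$-fixed hyperplane $(x_4 = 0)$, so the two residual lines to $\ell$ in $X \cap \langle \ell, d \rangle$ are pointwise fixed by $\tau$, forcing $\sigma$ to fix both points of $\pi^{-1}(d)$. For $d = [0:0:1]$, the plane $\langle \ell, d \rangle = (x_2 = x_3 = 0)$ meets $X$ in the divisor $(x_0 x_4^2 = 0) = 2\ell \cup L_1$ with $L_1 = (x_0 = x_2 = x_3 = 0)$; although $L_1$ is not pointwise fixed, it is preserved as a set by $\tau$, so again $\sigma$ fixes the two points of $\pi^{-1}(d)$.

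Granting $\#\operatorname{Fix}(\sigma) = 12$ and $\#\operatorname{Fix}(\sigma\iota) = 0$, the rest of the lemma follows from standard Klein-group arithmetic and Riemann--Hurwitz. Namely, $a_\sigma$ is branched at $12$ points (so $g(\widetilde{D}_\sigma) = 3$), while $a_{\sigma\iota}$ and $a_\iota$ are \'etale (yielding $g(\widetilde{D}_{\sigma\iota}) = 6$). The induced $\iota$-action on $\widetilde{D}_\sigma$ is free, since a fixed point would lift to a fixed point of $\sigma\iota$ on $\widetilde{D}$, so $b_\sigma$ is \'etale; the induced $\sigma$-action on $\widetilde{D}_{\sigma\iota}$ has fixed locus equal to the quotient of $\operatorname{Fix}(\sigma)$ by the free $\iota$-action, namely $12/2 = 6$ points, so $b_{\sigma\iota}$ is branched at $6$ points; and $b_\iota$ is branched at the $6$ fixed points of $\sigma_D$, as already noted. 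This completes the proof.
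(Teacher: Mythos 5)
Your argument is correct, and it is genuinely more self-contained than what the paper does: the paper's proof of \lemmaref{lem:kleinD} is a one-line deferral to \cite[Thm.~6.3]{RR_prym1} with "details left to the reader," whereas you supply exactly the missing ingredient, namely the ramification data of the Klein four-group action. Your computation of the fixed loci is the right one and matches the geometry the paper uses elsewhere (cf.\ the proof of \propositionref{prop:coneprym}, where the same dichotomy --- residual lines swapped by $\tau$ unless one passes through $p$ or both lie in $S$ --- is analyzed): the six fixed points of $\sigma_D$ are the five points of $D\cap(x_4=0)$, over which the residual lines lie in $S$ and are pointwise $\tau$-fixed, plus $[0:0:1]$, over which the residual conic is $\ell\cup L_1$ with $L_1$ a ruling line of the cone through $p$, preserved but not pointwise fixed; hence $\#\operatorname{Fix}(\sigma)=12$ and $\sigma\iota$ is free, and Riemann--Hurwitz does the rest. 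Two small remarks. First, your parenthetical "transversely, for generic $X$ and $\ell$" is not needed: since $\operatorname{Fix}(\sigma_D)\subseteq (D\cap L_\tau)\cup\{[0:0:1]\}$ has at most six points while \lemmaref{lem:Dbar} and Riemann--Hurwitz force exactly six, the five intersection points are automatically distinct whenever $D$ is smooth, i.e.\ under \cndref{cond:generic}. Second, when you assert that each fiber $\pi^{-1}(d)$ over $d\in D$ has two points permuted by $\sigma$, it is worth citing that smoothness of $D$ forces the conic over $d$ to have corank exactly $1$ (as recalled in \S\ref{S:CubSolid}), so the residual conic really is a union of two distinct lines. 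What your approach buys is an explicit verification tied to the lines on the Eckardt cubic; what the paper's citation buys is that the same reference also delivers the isogeny decomposition of \propositionref{P:RR6.3}, which your fixed-point count alone does not.
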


\begin{proof}
This essentially follows from \cite[Thm.~ 6.3]{RR_prym1}.  For brevity, the details are left to the reader.
\end{proof}

\begin{proposition}[{\cite[Thm.~ 6.3]{RR_prym1}}]\label{P:RR6.3} In the notation of \eqref{E:kleinD}, let $(P(\widetilde D,D),\Xi)$ be the principally polarized Prym variety.
There is an isogeny of polarized abelian varieties 
$$
\phi_\iota: P(\widetilde{D}_\sigma, \overline{D}) \times P(\widetilde{D}_{\sigma \iota}, \overline{D}) \longrightarrow P(\widetilde{D}, D), \,\,\,\,\,\, (y_1, y_2) \mapsto a_\sigma^*(y_1) + a_{\sigma \iota}^*(y_2)
$$
with $\ker(\phi_\iota) \cong (\bZ/2\bZ)^3$,
where $a_\sigma^*$ denotes the pull-back between Jacobians, and similarly for $a_{\sigma\iota}^*$.  
 More explicitly, we have $$
\ker(\phi_\iota) = \{(y_1, y_2) \in P(\widetilde{D}_\sigma, \overline{D})[2] \times P(\widetilde{D}_{\sigma \iota}, \overline{D})[2] \,\mid \, a_\sigma^*(y_1) = a_{\sigma \iota}^*(y_2)\}.
$$ 

Moreover, with respect to the action of $\sigma$ on $(P(\widetilde D,D),\Xi)$, the isogeny $\phi_\iota$ induces  isomorphisms of polarized abelian varieties  $P(\widetilde D,D)^\sigma\cong P(\widetilde D_\sigma,\overline D)$ and $P(\widetilde D,D)^{-\sigma}\cong P(\widetilde D_{\sigma \iota},\overline D)/\langle  b_{\sigma \iota}^*\epsilon \rangle $, where $\epsilon$ is the $2$-torsion line bundle on $\overline D$  defining the \'etale double cover $\widetilde{D}_\sigma \rightarrow \overline{D}$, and $b_{\sigma \iota}^*\epsilon$ is nontrivial.
\end{proposition}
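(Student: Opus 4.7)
The plan is to rely on \cite{RR_prym1} for the existence of the isogeny $\phi_\iota$ and the structure of its kernel $(\bZ/2\bZ)^3$; the main task is the ``moreover'' part, which describes the $\sigma$-decomposition induced by $\phi_\iota$. The idea is to use the fact that $a_\sigma^*$ (respectively $a_{\sigma\iota}^*$) automatically lands in a certain eigenspace for $\sigma$, then combine this with a dimension count and the kernel analysis of the pullback maps.

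First, I would observe that the image of $a_\sigma^*: J(\widetilde D_\sigma) \to J(\widetilde D)$ is contained in $J(\widetilde D)^\sigma$, since $\sigma$ lies in the Galois group of $a_\sigma$. Combined with the commutativity of the diagram in \lemmaref{lem:kleinD}, this forces $a_\sigma^* P(\widetilde D_\sigma, \overline D) \subset P(\widetilde D, D)^\sigma$. Similarly, $a_{\sigma\iota}^* J(\widetilde D_{\sigma\iota}) \subset J(\widetilde D)^{\sigma\iota}$; on $P(\widetilde D, D)$ the involution $\iota$ acts by $-1$ (the defining property of the Prym), so $\sigma$ acts by $-1$ on the intersection, and therefore $a_{\sigma\iota}^* P(\widetilde D_{\sigma\iota}, \overline D) \subset P(\widetilde D, D)^{-\sigma}$.

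Next, a dimension count: from \lemmaref{lem:kleinD}, $\dim P(\widetilde D_\sigma, \overline D) = 3 - 2 = 1$ and $\dim P(\widetilde D_{\sigma\iota}, \overline D) = 6 - 2 = 4$, matching $\dim P(\widetilde D, D)^\sigma = 1$ and $\dim P(\widetilde D, D)^{-\sigma} = 4$ from \lemmaref{lem:dimpm} (via the Beauville identification $P(\widetilde D, D) \cong JX$). Since $a_\sigma$ is ramified (at $12$ points), $a_\sigma^*$ is injective on Jacobians, and hence induces an isomorphism $P(\widetilde D_\sigma, \overline D) \xrightarrow{\sim} P(\widetilde D, D)^\sigma$. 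For the étale cover $a_{\sigma\iota}$, the kernel of $a_{\sigma\iota}^*$ on Jacobians is $\langle \eta\rangle$, where $\eta \in J(\widetilde D_{\sigma\iota})[2]$ is the $2$-torsion line bundle defining the cover. I would then identify $\eta = b_{\sigma\iota}^* \epsilon$ by computing
\[
a_{\sigma\iota}^*(b_{\sigma\iota}^* \epsilon) = (b_{\sigma\iota} \circ a_{\sigma\iota})^* \epsilon = (b_\sigma \circ a_\sigma)^* \epsilon = a_\sigma^*(b_\sigma^* \epsilon) = 0,
\]
using $b_\sigma^* \epsilon = 0$ by the very definition of $\epsilon$. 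Nontriviality of $b_{\sigma\iota}^* \epsilon$ would follow from connectedness of $\widetilde D$: if it were trivial, the associated double cover $\widetilde D \to \widetilde D_{\sigma\iota}$ would be disconnected. The inclusion $b_{\sigma\iota}^* \epsilon \in P(\widetilde D_{\sigma\iota}, \overline D)$ would follow from $\mathrm{Nm}(b_{\sigma\iota}^* \epsilon) = 2\epsilon = 0$, together with the fact that the kernel of the norm map for the ramified double cover $b_{\sigma\iota}$ is connected. Consequently $a_{\sigma\iota}^*$ descends to an isomorphism $P(\widetilde D_{\sigma\iota}, \overline D)/\langle b_{\sigma\iota}^* \epsilon\rangle \xrightarrow{\sim} P(\widetilde D, D)^{-\sigma}$.

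Finally, I would verify that the polarizations match. By \lemmaref{lem:dimpm} and \lemmaref{L:Rod_avgroup}, the restrictions of $\Xi$ to $P(\widetilde D, D)^\sigma$ and $P(\widetilde D, D)^{-\sigma}$ are of types $(2)$ and $(1,1,1,2)$ respectively. On the other side, the canonical polarization on the elliptic curve $P(\widetilde D_\sigma, \overline D)$ induced by restriction of the principal polarization on $J(\widetilde D_\sigma)$ is of type $(2)$, while the induced polarization on $P(\widetilde D_{\sigma\iota}, \overline D)/\langle b_{\sigma\iota}^* \epsilon\rangle$ is computed via the quotient construction (cf.~\remref{R:dualPol}) applied to $P(\widetilde D_{\sigma\iota}, \overline D)$. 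I expect the polarization comparison on the quotient to be the main obstacle: one must carefully track how the restriction of the principal polarization of $J(\widetilde D_{\sigma\iota})$ to $P(\widetilde D_{\sigma\iota}, \overline D)$ descends modulo $\langle b_{\sigma\iota}^* \epsilon\rangle$ and match it with the restriction of $\Xi$ of type $(1,1,1,2)$. The cleanest route is to exploit that $\phi_\iota$ itself is an isogeny of polarized abelian varieties when the source carries the product of the natural polarizations on the two Prym factors: then restricting to each of the $\sigma$-eigenspace decompositions yields the claimed isomorphisms as polarized abelian varieties automatically.
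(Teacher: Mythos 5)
Your proposal is correct and is consistent with the paper's treatment: the paper simply cites \cite[Thm.~6.3]{RR_prym1} and leaves the details to the reader, and the details you supply (eigenspace containments via the Galois action, the dimension count, injectivity of $a_\sigma^*$ for the ramified cover, the identification $\ker a_{\sigma\iota}^*=\langle b_{\sigma\iota}^*\epsilon\rangle$, and the polarization comparison via the product polarization on the source of $\phi_\iota$) are exactly the intended ones, matching the argument the paper does spell out for the parallel \propositionref{prop:coneprym}. No gaps; the tentative worry in your last paragraph is resolved by the ``cleanest route'' you yourself propose.
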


\begin{proof}
This essentially follows from \cite[Thm.~ 6.3]{RR_prym1}.  For brevity, the details are left to the reader.
\end{proof}

Another Klein tower shows up in Mumford's hyperelliptic construction for the \'etale double cover  $b_\sigma: \widetilde D_\sigma \to \overline D$. 

\begin{lem}[{\cite[p.346]{Mumford_prym}}]\label{L:Mum} In the notation of \lemmaref{lem:kleinD}, with $E$ the hyperplane section of the cubic surface $S$ \eqref{E:Edef},  
we have the following commutative diagram
\begin{equation}\label{E:MumPrym}
\xymatrix{
& \widetilde{D}_\sigma \ar[ld]_{b_\sigma=b_{\overline{D}}}^{\text{\'et}} \ar[d]^{b_E} \ar[rd]^{b_R} \\
\overline{D} \ar[rd]_{c_{\overline{D}}} &
E \ar[d]^{c_E} &
R \cong \bP^1 \ar[ld]^{c_R} \\
& T \cong \bP^1
}
\end{equation}
and we have an isomorphism of principally polarized abelian varieties
 $$P(\widetilde D_\sigma,\overline D)\cong E\times J(R)=E.$$
\end{lem}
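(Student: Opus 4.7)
The plan is to apply Mumford's classical construction \cite{Mumford_prym} for étale double covers of hyperelliptic curves. Since $\overline{D}$ has genus $2$ by \lemmaref{lem:Dbar}, it carries a hyperelliptic involution $h$ with quotient $T := \overline{D}/\langle h\rangle \cong \bP^1$. Pulling $h$ back along the étale double cover $b_\sigma$ produces an involution $\tilde h$ on $\widetilde{D}_\sigma$ that commutes with the covering involution $\iota'$ of $b_\sigma$. The Klein four-group $\langle \iota', \tilde h\rangle \subset \Aut(\widetilde{D}_\sigma)$ then yields the diagram \eqref{E:MumPrym} upon taking quotients by its three nontrivial subgroups; one sets $E_M := \widetilde{D}_\sigma/\langle \tilde h\rangle$ and $R := \widetilde{D}_\sigma/\langle \iota'\tilde h\rangle$, and the maps and (un)ramification assertions in the diagram follow by tracking fixed loci of the involutions and applying Riemann--Hurwitz.

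Mumford's decomposition then gives an isomorphism of principally polarized abelian varieties $P(\widetilde{D}_\sigma, \overline{D}) \cong J(E_M) \times J(R)$ together with $g(E_M) + g(R) = g(\overline{D}) - 1 = 1$, so exactly one of $E_M, R$ is elliptic and the other is $\bP^1$. To decide which, I would combine \propositionref{P:RR6.3} with the $\tau$-equivariance of the canonical isomorphism $JX \cong P(\widetilde{D}, D)$ from \eqref{E:P=IJ} to obtain $P(\widetilde{D}_\sigma, \overline{D}) \cong P(\widetilde{D}, D)^\sigma \cong JX^\tau$, which has dimension $1$ by \lemmaref{lem:dimpm}. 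This forces $g(R) = 0$ (so $R \cong \bP^1$ and $J(R) = 0$) and $g(E_M) = 1$, yielding $P(\widetilde{D}_\sigma, \overline{D}) \cong J(E_M) = E_M$ as PPAVs.

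The main remaining step --- and the main obstacle --- is identifying the Mumford elliptic quotient $E_M$ with the hyperplane section $E = S \cap \Pi$. The natural route is to invoke \theoremref{T:conedecomp}: for an Eckardt cubic on which one can also project from a line $\ell' \subset X$ through $p$, that theorem provides a canonical isomorphism $JX^\tau \cong S \cap \Pi$ of PPAVs via the cone construction, and composing with the chain $E_M \cong P(\widetilde{D}_\sigma, \overline{D}) \cong JX^\tau$ above yields $E_M \cong S \cap \Pi$. For a general Eckardt cubic both the projection of the present section (from $\ell \subset S$, under \cndref{cond:generic}) and the cone projection (from $\ell' \ni p$ avoiding all lines of $S$) are simultaneously available, so the identification holds directly there, and extends over the full locus of \cndref{cond:generic} by continuity on the moduli of Eckardt cubic threefolds. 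Since both sides are polarized abelian varieties of dimension $1$ --- where a principal polarization is unique up to translation --- there is no subtlety in upgrading an abstract isomorphism of abelian varieties to one of PPAVs.
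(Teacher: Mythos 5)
Your proposal is correct, and the first half (Mumford's Klein-four tower over the genus-$2$ curve $\overline{D}$, the decomposition $P(\widetilde{D}_\sigma,\overline{D})\cong J(E_M)\times J(R)$ with $g(E_M)+g(R)=g(\overline{D})-1=1$) is exactly what the paper's citation to Mumford and to \cite{BO_pas14} supplies. Where you genuinely diverge is in identifying the elliptic factor with $E=S\cap\Pi$. The details the paper leaves to the reader are a direct computation in the coordinates of \eqref{eqn:X}: the six Weierstrass points of $\overline{D}\to\mathbb P^1_{x_2,x_3}$ are cut out by $(l_1l_3-l_2^2)(l_3c-q_2^2)=Q\cdot B$ (this is precisely where \cndref{cond:generic} guarantees an honest $2+4$ partition), and the genus-$1$ Mumford quotient is the double cover of $\mathbb P^1_{x_2,x_3}$ branched at $B$, which is literally the plane cubic $E=(x_0=l_3x_1^2+2q_2x_1+c=0)$ projected from the point $\ell\cap\Pi$. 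You instead identify the elliptic factor abstractly via the chain $J(E_M)\cong P(\widetilde{D}_\sigma,\overline{D})\cong P(\widetilde D,D)^\sigma\cong JX^\tau\cong E$, importing \propositionref{P:RR6.3}, the $\tau$-equivariance of \eqref{E:P=IJ}, and \theoremref{T:conedecomp}. This is logically sound --- there is no circularity, since \theoremref{T:conedecomp} is established in Section~\ref{sec:projcone} independently of this lemma --- and since an isomorphism of elliptic curves is the same as an isomorphism of one-dimensional polarized abelian varieties, it suffices for the statement as used in the paper. What it costs you is the explicit geometric map $b_E$ (you get only an abstract isomorphism $E_M\cong E$), and it makes the conclusion $JX^\tau\cong E$ of \theoremref{T:27decomp} a repackaging of \theoremref{T:conedecomp} rather than an independent second derivation; what it buys is the avoidance of the branch-divisor computation. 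Two minor points: the continuity/specialization argument at the end is unnecessary, since a line $\ell'$ through $p$ missing all $27$ lines of $S$ exists on \emph{every} Eckardt cubic threefold (each of the $27$ lines meets $E$ in a single point), so \theoremref{T:conedecomp} applies throughout the locus where \cndref{cond:generic} holds; and the dimension count via $JX^\tau$ is redundant for concluding that exactly one of $E_M,R$ is elliptic, as that already follows from $g(E_M)+g(R)=1$.
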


\begin{proof}
This can be deduced from \cite[Prop.~ 4.2, Cor.~ 4.3]{BO_pas14} (see also \cite[\S 7]{Mumford_prym}).  For brevity, the details are left to the reader.
\end{proof}

Putting this together, we obtain the following theorem:

\begin{theorem}[Projecting from $\ell\subset S\subset X$]  \label{T:27decomp}
Let $(X,p)$ be an Eckardt cubic threefold, let $\ell \subset S$ be a line contained in the associated cubic surface \eqref{E:Sdef} 
 satisfying \cndref{cond:generic} (note that such a line $\ell\subset S$ exists on a general Eckardt cubic threefold; see Remark \ref{R:condGen}), 
let $E\subset S$ be the hyperplane section  \eqref{E:Edef}, and let $b_{\sigma\iota}:\widetilde D_{\sigma\iota}\to \overline D$ be the branched cover of the smooth genus $2$ curve $\overline D$ from \eqref{E:kleinD}.  

There is an isogeny of polarized abelian varieties 
$$
\phi: E \times P(\widetilde{D}_{\sigma \iota}, \overline{D}) \longrightarrow JX
$$
with $\ker(\phi) \cong (\bZ/2\bZ)^3$.

Moreover, with respect to the action of $\tau$ on $(JX,\Theta_X)$, the isogeny $\phi$ induces  isomorphisms of polarized abelian varieties  $JX^\tau \cong E$ and $JX^{-\tau}\cong P(\widetilde D_{\sigma \iota},\overline D)/\langle  b_{\sigma \iota}^*\epsilon \rangle $, where $\epsilon$ is the $2$-torsion line bundle defining the \'etale double cover $\widetilde{D}_\sigma \rightarrow \overline{D}$ from \eqref{E:kleinD}, and $b_{\sigma \iota}^*\epsilon$ is nontrivial.
\end{theorem}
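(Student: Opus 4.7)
The proof is essentially an assembly of the three results developed earlier in this section, and my plan is to simply chain them together while keeping careful track of the involution $\tau$ and its induced action on each Prym variety.

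First, I would invoke Beauville's canonical isomorphism of principally polarized abelian varieties $P(\widetilde D, D) \cong JX$ from \eqref{E:P=IJ}. Under \cndref{cond:generic}, \propositionref{prop:proj27} guarantees that the discriminant quintic $D$ is smooth and $\widetilde D \to D$ is a connected \'etale double cover, so that $P(\widetilde D, D)$ is a genuine principally polarized abelian variety of dimension $5$. The involution $\tau$ on $X$ induces the involution $\sigma$ on $\widetilde D$ defined in \eqref{E:sigmaDef}, which in turn acts on $P(\widetilde D, D)$ in a manner compatible with the isomorphism $P(\widetilde D, D) \cong JX$, since both are induced by $\tau$ acting on $X$.

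Next I would apply \propositionref{P:RR6.3}, which provides an isogeny of polarized abelian varieties
$$
\phi_\iota : P(\widetilde D_\sigma, \overline D) \times P(\widetilde D_{\sigma\iota}, \overline D) \longrightarrow P(\widetilde D, D)
$$
with kernel isomorphic to $(\bZ/2\bZ)^3$, together with the $\sigma$-equivariant identifications
$P(\widetilde D, D)^{\sigma} \cong P(\widetilde D_\sigma, \overline D)$ and $P(\widetilde D, D)^{-\sigma} \cong P(\widetilde D_{\sigma\iota}, \overline D)/\langle b_{\sigma\iota}^* \epsilon\rangle$, where $\epsilon$ is the $2$-torsion line bundle on $\overline D$ defining the \'etale cover $\widetilde D_\sigma \to \overline D$, and $b_{\sigma\iota}^*\epsilon$ is nontrivial.

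Finally I would invoke \lemmaref{L:Mum}, which is Mumford's hyperelliptic construction applied to the \'etale double cover $b_\sigma : \widetilde D_\sigma \to \overline D$: it yields an isomorphism of principally polarized abelian varieties
$$
P(\widetilde D_\sigma, \overline D) \cong E
$$
identifying the first Prym factor with the elliptic curve $E = S \cap \Pi$. Composing this with $\phi_\iota$ and the isomorphism $P(\widetilde D, D) \cong JX$ produces the desired isogeny
$$
\phi : E \times P(\widetilde D_{\sigma\iota}, \overline D) \longrightarrow JX,
$$
whose kernel remains $(\bZ/2\bZ)^3$ (the kernel of $\phi_\iota$ is unchanged under replacing $P(\widetilde D_\sigma, \overline D)$ with $E$ via an isomorphism), and the $\tau$-equivariant identifications $JX^\tau \cong E$ and $JX^{-\tau} \cong P(\widetilde D_{\sigma\iota}, \overline D)/\langle b_{\sigma\iota}^*\epsilon\rangle$ follow immediately by transporting the corresponding statements from \propositionref{P:RR6.3} through the identifications $JX \cong P(\widetilde D, D)$ and $P(\widetilde D_\sigma, \overline D) \cong E$.

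There is no real obstacle here, as all the hard work has been done in the cited lemmas and propositions; the only point requiring any care is to verify that $\tau$ on $JX$ corresponds precisely to $\sigma$ on $P(\widetilde D, D)$ under Beauville's isomorphism, and that the action of $\sigma$ on $\widetilde D_\sigma \cong$ (the hyperelliptic setup of $E$) is trivial—so that $E$ sits inside $JX^\tau$ rather than $JX^{-\tau}$—but this follows directly from the constructions in \lemmaref{L:Mum} and the fact that $\sigma$ acts by the identity on $\widetilde D_\sigma = \widetilde D/\langle \sigma\rangle$.
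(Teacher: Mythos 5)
Your proposal is correct and follows essentially the same route as the paper, which likewise deduces the theorem immediately from \propositionref{P:RR6.3} and \lemmaref{L:Mum} after noting that under Beauville's isomorphism $JX\cong P(\widetilde D,D)$ the action of $\tau$ is identified with that of $\sigma$. The extra care you take in checking that $\sigma$ acts trivially on $\widetilde D_\sigma=\widetilde D/\langle\sigma\rangle$, so that $E$ lands in the invariant part, is a correct and worthwhile detail that the paper leaves implicit.
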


\begin{proof}
Since $JX\cong P(\widetilde D,D)$, and the action of $\tau$ on $JX$ is identified with the action of $\sigma$ on $P(\widetilde D,D)$, 
this is an immediate consequence of \propositionref{P:RR6.3} and \lemmaref{L:Mum}.
\end{proof}

\begin{rem}
Using results of
\cite{NO_prymtorelli2, NO_prymtorelli, LO_cyclicprym},  one can use \theoremref{T:27decomp} to prove that the period map $\calP: \calM \rightarrow \calA_4^{(1,1,1,2)}$ for cubic surface pairs is generically finite-to-one onto its image, and that the differential of $\calP$ is injective at a generic point.   Since we can use other methods to give a short proof of a stronger statement  (\corollaryref{C:InfTorHdg}), we omit this proof here.
\end{rem}

\bibliography{ref}
\end{document}